\renewcommand*{\backref}[1]{}
\renewcommand*{\backrefalt}[4]{%
    \ifcase #1 (Not cited.)%
    \or        (p.\,#2)%
    \else      (pp.\,#2)%
    \fi}
\begin{document}

\newtheorem{theorem}{Theorem}
\newtheorem{lemma}[theorem]{Lemma}
\newtheorem{example}[theorem]{Example}
\newtheorem{algol}{Algorithm}
\newtheorem{corollary}[theorem]{Corollary}
\newtheorem{prop}[theorem]{Proposition}
\newtheorem{definition}[theorem]{Definition}
\newtheorem{question}[theorem]{Question}
\newtheorem{problem}[theorem]{Problem}
\newtheorem{remark}[theorem]{Remark}
\newtheorem{conjecture}[theorem]{Conjecture}

\newcommand{\llrrparen}[1]{
  \left(\mkern-3mu\left(#1\right)\mkern-3mu\right)}

\newcommand{\commM}[1]{\marginpar{%
\begin{color}{red}
\vskip-\baselineskip 
\raggedright\footnotesize
\itshape\hrule \smallskip M: #1\par\smallskip\hrule\end{color}}}

\newcommand{\commA}[1]{\marginpar{%
\begin{color}{blue}
\vskip-\baselineskip 
\raggedright\footnotesize
\itshape\hrule \smallskip A: #1\par\smallskip\hrule\end{color}}}
\def\xxx{\vskip5pt\hrule\vskip5pt}


\def\cA{{\mathcal A}}
\def\cB{{\mathcal B}}
\def\cC{{\mathcal C}}
\def\cD{{\mathcal D}}
\def\cE{{\mathcal E}}
\def\cF{{\mathcal F}}
\def\cG{{\mathcal G}}
\def\cH{{\mathcal H}}
\def\cI{{\mathcal I}}
\def\cJ{{\mathcal J}}
\def\cK{{\mathcal K}}
\def\cL{{\mathcal L}}
\def\cM{{\mathcal M}}
\def\cN{{\mathcal N}}
\def\cO{{\mathcal O}}
\def\cP{{\mathcal P}}
\def\cQ{{\mathcal Q}}
\def\cR{{\mathcal R}}
\def\cS{{\mathcal S}}
\def\cT{{\mathcal T}}
\def\cU{{\mathcal U}}
\def\cV{{\mathcal V}}
\def\cW{{\mathcal W}}
\def\cX{{\mathcal X}}
\def\cY{{\mathcal Y}}
\def\cZ{{\mathcal Z}}

\def\C{\mathbb{C}}
\def\D{\mathbb{D}}
\def\I{\mathbb{I}}
\def\F{\mathbb{F}}
\def\H{\mathbb{H}}
\def\K{\mathbb{K}}
\def\G{\mathbb{G}}
\def\Z{\mathbb{Z}}
\def\R{\mathbb{R}}
\def\Q{\mathbb{Q}}
\def\N{\mathbb{N}}
\def\M{\textsf{M}}
\def\U{\mathbb{U}}
\def\P{\mathbb{P}}
\def\A{\mathbb{A}}
\def\p{\mathfrak{p}}
\def\n{\mathfrak{n}}
\def\X{\mathcal{X}}
\def\x{\textrm{\bf x}}
\def\w{\textrm{\bf w}}
\def\z{\mathrm{z}}
\def\ovQ{\overline{\Q}}
\def\rank#1{\mathrm{rank}#1}
\def\wf{\widetilde{f}}
\def\wg{\widetilde{g}}
\def\comp{\hskip -2.5pt \circ  \hskip -2.5pt}
\def\({\left(}
\def\){\right)}
\def\[{\left[}
\def\]{\right]}
\def\<{\langle}
\def\>{\rangle}
\newcommand*\diff{\mathop{}\!\mathrm{d}}
\def\Lip{\mathrm{Lip}}

\def\BP{\mathbb{P}_{\mathrm{Berk}}^1}
\def\BA{\mathbb{A}_{\mathrm{Berk}}^1}
\def\cJBv{\mathcal{J}_{v,\mathrm{Berk}}}
\def\cFBv{\mathcal{F}_{v,\mathrm{Berk}}}

\def\gen#1{{\left\langle#1\right\rangle}}
\def\genp#1{{\left\langle#1\right\rangle}_p}
\def\genPs{{\left\langle P_1, \ldots, P_s\right\rangle}}
\def\genPsp{{\left\langle P_1, \ldots, P_s\right\rangle}_p}

\def\e{e}

\def\eq{\e_q}
\def\fh{{\mathfrak h}}

\def\lcm{{\mathrm{lcm}}\,}

\def\l({\left(}
\def\r){\right)}
\def\fl#1{\left\lfloor#1\right\rfloor}
\def\rf#1{\left\lceil#1\right\rceil}
\def\mand{\qquad\mbox{and}\qquad}

\def\jt{\tilde\jmath}
\def\ellmax{\ell_{\rm max}}
\def\llog{\log\log}

\def\diam{{\mathrm{diam}}}
\def\m{{\mathfrak{m}}}
\def\ud{{\mathrm{d}}}
\def\ch{\hat{h}}
\def\GL{{\rm GL}}
\def\Orb{\mathrm{Orb}}
\def\Per{\mathrm{Per}}
\def\larg{\mathrm{larg}}
\def\Preper{\mathrm{Preper}}
\def \S{\mathcal{S}}
\def\vec#1{\mathbf{#1}}
\def\ov#1{{\overline{#1}}}
\def\Gal{{\rm Gal}}

\newcommand{\bfalpha}{{\boldsymbol{\alpha}}}
\newcommand{\bfomega}{{\boldsymbol{\omega}}}

\newcommand{\canheight}[1]{{\hat h_{#1}}}
\newcommand{\loccanheight}[2]{{\hat h_{#2,#1}}}
\newcommand{\relsupreper}[3]{{\cP_{#1,#2}(#3)}}
\newcommand{\disunit}[3]{{\cA_{#1,#2}(#3)}}
\def\rat{f}
\def\pol{f}
\def\unipol{f_c}

\newcommand{\Ch}{{\operatorname{Ch}}}
\newcommand{\Elim}{{\operatorname{Elim}}}
\newcommand{\proj}{{\operatorname{proj}}}
\newcommand{\h}{{\operatorname{h}}}

\newcommand{\hh}{\mathrm{h}}
\newcommand{\aff}{\mathrm{aff}}
\newcommand{\Spec}{{\operatorname{Spec}}}
\newcommand{\Res}{{\operatorname{Res}}}

\numberwithin{equation}{section}
\numberwithin{theorem}{section}

\def\house#1{{%
    \setbox0=\hbox{$#1$}
    \vrule height \dimexpr\ht0+1.4pt width .5pt depth \dimexpr\dp0+.8pt\relax
    \vrule height \dimexpr\ht0+1.4pt width \dimexpr\wd0+2pt depth \dimexpr-\ht0-1pt\relax
    \llap{$#1$\kern1pt}
    \vrule height \dimexpr\ht0+1.4pt width .5pt depth \dimexpr\dp0+.8pt\relax}}

\newcommand{\Address}{{
\bigskip
\footnotesize
\textsc{Department of Pure Mathematics and Mathematical Statistics, University of Cambridge, Cambridge CB3 0WB, UK}\par\nopagebreak
\textit{E-mail address:} \texttt{mjy28@cam.ac.uk}
}}

\subjclass[2020]{37F10, 37P05, 11G50}

\title[]
{Effective bounds on $S$-integral preperiodic points for polynomials}

\begin{abstract}
Given a polynomial $\pol$ defined over a number field $K$, we make effective certain special cases of a conjecture of S. Ih, on the finiteness of $\pol$-preperiodic points which are $S$-integral with respect to a fixed non-preperiodic point $\alpha$. As an application, we obtain bounds on the number of $S$-units in the doubly indexed sequence $\{ \pol^n(\alpha) - \pol^m(\alpha) \}_{n > m \geq 0}$. In the case of a unicritical polynomial $\unipol(z)=z^2+c$, with $\alpha$ fixed to be the critical point 0, for parameters $c$ outside a small region, we give an explicit bound which depends only on the number of places of bad reduction for $\unipol$. As part of the proof, we obtain novel lower bounds for the $v$-adically smallest preperiodic point of $\unipol$ for each place $v$ of $K$.
\end{abstract}

\author {Marley Young}

\maketitle

\vspace{-0.5cm}

\section{Introduction}

\subsection{Motivation and background} Let $K$ be a number field with algebraic closure $\overline K$, and let $S$ be a finite set of places of $K$ including all the archimedean ones. Given two points $\alpha, \beta \in \P^1(\overline K)$, we say that $\beta$ is \emph{$S$-integral relative to $\alpha$} if $\alpha$ and $\beta$ have distinct reduction with respect to each place of $\overline K$ lying over the places of $K$ outside $S$. A notion of $S$-integrality can more generally be defined relative to an effective divisor on an abelian variety (see for example \cite{GI}).


Problems of unlikely intersections have fueled recent development in arithmetic geometry and provided a deep connection to arithmetic dynamics. For example, S. Ih \cite[Conjecture~1.2]{BIR} has conjectured, given an abelian variety $A/K$ and an effective divisor $D$ on $A$, that the set of torsion points of $A(\overline K)$ which are $S$-integral relative to $D$ is not Zariski dense in $A$. This conjecture can be viewed as similar to the Manin-Mumford conjecture, proved by Raynaud \cite{Ra}, but integral points are considered since $A \setminus \mathrm{supp}(D)$ is non-compact. The cases of elliptic curves and the multiplicative group were proved by Baker, Ih and Rumely in \cite{BIR}. Links from arithmetic geometry to arithmetic dynamics are often made through an analogy between torsion points on abelian varieties and preperiodic points of rational maps. Indeed, in this manner, Ih further conjectured a general dynamical phenomenon.

Given a self map $\rat: X \to X$ of a set $X$, let $\rat^n$ denote the $n$-fold composition given by
$$
\rat^n = \underbrace{\rat \circ \cdots \circ \rat}_{n \text{ times}}.
$$
A point $\alpha \in X$ is \emph{preperiodic} for $\rat$ if $\rat^{n+m}(\alpha) = \rat^m(\alpha)$ for some $n \geq 1$ and $m \geq 0$. For $Y \subseteq X$, let $\mathrm{PrePer}(f,Y)$ denote the set of elements of $Y$ which are preperiodic for $f$.

\begin{conjecture}[Ih] \label{conj:Ih}
Let $K$ be a number field, and let $S$ be a finite set of places containing all the archimedean ones. If $\rat : \P^1 \to \P^1$ is a rational function of degree at least two defined over $K$, and $\alpha \in \P^1(\overline K)$ is a point which is not preperiodic, then there are at most finitely many preperiodic points $\beta \in \P^1(\overline K)$ that are $S$-integral with respect to $\alpha$.
\end{conjecture}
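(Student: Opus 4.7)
The plan is to adapt the equidistribution-based strategy of Baker-Ih-Rumely to the dynamical setting. Suppose for contradiction that there is an infinite sequence of preperiodic points $\beta_i \in \P^1(\ov K)$, each $S$-integral relative to $\alpha$. Since preperiodic points have canonical height zero, the Baker-Rumely / Chambert-Loir / Favre-Rivera-Letelier equidistribution theorem applies: at every place $v$ of $K$, the Galois orbits of the $\beta_i$ equidistribute on the Berkovich line $\BP$ against the canonical measure $\mu_{f,v}$ associated to $f$.

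The next step is to test this equidistribution against the function $\varphi_v(z) = -\log\|z,\alpha\|_v$, where $\|\cdot,\cdot\|_v$ is the $v$-adic chordal distance. The $S$-integrality hypothesis translates into the clean inequality $\varphi_v(\beta_i) \leq 0$ for every $v \notin S$, so after averaging over a Galois orbit and summing over all places, one obtains an upper bound in which only the places in $S$ contribute positively. The function $\varphi_v$ has a logarithmic singularity at $\alpha$, so it is not a valid test function for equidistribution directly; I would regularize it by truncation (or, equivalently, subtract an Arakelov-Green potential supported at $\alpha$), apply the equidistribution limit to the regularized function, and then let the truncation parameter go to zero while carefully tracking the error. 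The resulting limit of the averaged sums should identify, via the product formula, with a positive multiple of the canonical height $\hat h_f(\alpha)$.

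The contradiction then arises because $\alpha$ is not preperiodic, so $\hat h_f(\alpha) > 0$, whereas $S$-integrality plus equidistribution forces the limit to be $\leq 0$; finiteness follows. To obtain the effective refinement the paper is really after, one would replace this soft limit by a quantitative equidistribution / discrepancy estimate and control the approach of Galois conjugates to $\alpha$ in explicit terms, likely dovetailing with the lower bounds on $v$-adically small preperiodic points promised by the abstract.

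The main obstacle is exactly the log-singularity step. One needs a uniform lower bound of the shape $\|\beta_i^\sigma,\alpha\|_v \geq c_v > 0$ for the conjugates $\beta_i^\sigma$ at places $v \in S$; without such an avoidance estimate, the integral of $\varphi_v$ against the empirical measures can diverge to $-\infty$ at a place in $S$ and swamp the desired inequality. Proving that $\alpha$ lies outside the support (or at least the bulk) of $\mu_{f,v}$ for $v \in S$, and quantifying how quickly preperiodic points can accumulate to $\alpha$ there, is both the technical heart of the argument and the step whose effective control presumably forces the unicritical hypothesis and the "parameter outside a small region" restriction in the paper's main theorem.
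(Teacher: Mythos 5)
There is a genuine gap, and it is unavoidable: the statement you were asked about is Ih's conjecture, which the paper does not prove and which remains open. The paper only establishes effective versions of special cases, namely Theorem~\ref{thm:main} (and its explicit form Theorem~\ref{thm:main2}), which require $\rat$ to be a polynomial and, crucially, require $\alpha$ to be \emph{totally Fatou} at every place, i.e.\ $\delta_{\rat,v}(\alpha)>0$ for all $v$. Your proposal is essentially the same strategy the paper (following Petsche) uses in that special case: equidistribution of Galois orbits of height-zero points against the canonical measure, tested against a truncation of $\log|z-\alpha|_v$, combined with the product formula to contradict $\canheight{\rat}(\alpha)>0$. But it does not prove the conjecture, because the step you yourself flag as ``the main obstacle'' --- a uniform lower bound $\|\beta_i^\sigma,\alpha\|_v\geq c_v>0$ at the places $v\in S$ --- is precisely what fails in general. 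Preperiodic points are dense in the $v$-adic Julia set, so if $\alpha$ (or one of its conjugates) lies in $\cJ_v(\rat)$ at even one place $v$, then $\delta_{\rat,v}(\alpha)=0$: preperiodic points can accumulate at $\alpha$, the truncation error at that place is not $o(1)$ as the truncation parameter shrinks, and the averaged integral of $-\log\|\cdot,\alpha\|_v$ against the empirical measures can genuinely diverge. Nothing in the $S$-integrality hypothesis prevents this, since it only constrains places outside $S$. So the argument establishes finiteness only under the totally Fatou hypothesis, which is exactly the restriction built into the paper's theorems; without it, no proof (soft or quantitative) is currently known.

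Two smaller points. First, the product formula gives that the full sum over all places of the averaged local terms is exactly $0$, not merely $\leq 0$; the contradiction comes from showing this sum is close to $\canheight{\rat}(\alpha)>0$ using equidistribution at the places in $S$ and the integrality condition at the places outside $S$ (where the local term equals $\loccanheight{v}{\rat}(\alpha)$ in the polynomial case). Second, the ``parameter outside a small region'' restriction in Theorem~\ref{thm:UniformMain} is not what makes the equidistribution step work --- that is handled by the totally Fatou hypothesis --- but is what makes the bound \emph{uniform}, by giving explicit lower bounds on $\delta_{\unipol,v}(0)$ and on the H\"older data in terms of $\canheight{\unipol}(0)$.
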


We say that a point $\alpha \in \P^1(\overline K)$ is \emph{totally Fatou} at a place $v$ of $K$ (with respect to the rational function $\rat$) if each of the $[K(\alpha):K]$ embeddings of $\alpha$ into $\P^1(\C_v)$ is contained in the $v$-adic Fatou set of $\rat$. Petsche \cite{P} proved Conjecture~\ref{conj:Ih} in the case where the non-preperiodic point $\alpha$ is totally Fatou at all places of $K$. In more generality, Conjecture~\ref{conj:Ih} has only been proved for very special maps, namely, those arising from endomorphisms of commutative algebraic groups for which the aforementioned \cite[Conjecture~1.2]{BIR} has been established. Baker, Ih and Rumely \cite{BIR} have proved the conjecture when $\rat$ is a Latt\'{e}s map associated to an elliptic curve or $\rat(z)=z^d$ for some $d \geq 2$. Also, in \cite{IT}, Ih and Tucker prove the conjecture for Chebyshev maps.

A key tool in the proof of each of the above cases of Conjecture~\ref{conj:Ih} is the equidistribution of points of small height. Recently, we have seen the effectiveness of quantitative equidistribution techniques in answering questions in unlikely intersections, particularly with a view toward uniform results. For example, they are used by DeMarco, Krieger and Ye \cite{DKY} to obtain a uniform Manin-Mumford bound for a family of genus two curves. 

In this paper we develop the tools to apply quantitative equidistribution to some cases of Conjecture~\ref{conj:Ih}, and in particular make Petsche's \cite{P} result effective when $\rat$ is a polynomial. The bounds we obtain depend substantially on the $v$-adic distance of the given totally Fatou, non-preperiodic point $\alpha$ to the nearest preperiodic point for $\pol$, at each place $v \in S$. Bounding these distances from below would make quantitative the known discreteness of Fatou preperiodic points for a rational map (see for example \cite[Proposition~2]{P}). It is moreover interesting to study how these distances vary within a family of rational maps, and the extent to which we can obtain uniform bounds. Using various local methods, we do this in the case of a unicritical polynomial $\unipol(z)=z^2+c$, when $\alpha=0$ is the critical point. For parameters $c$ lying outside a small region, we obtain a bound for the number of prepriodic points for $\unipol$ which are $S$-integral with respect to 0, depending only on the number of primes for which $c$ is non-integral.

We also relate Conjecture~\ref{conj:Ih} to bounding the number of $S$-units in the doubly indexed sequence $\{ \rat^n(\alpha)-\rat^m(\alpha) \}_{n > m \geq 0}$. This generalises the problem of bounding the number of $S$-units in orbits of $\pol$, which has been considered in \cite{KLS}.

Throughout the paper, we will use the following notation. Let $M_K$ denote the set of places of $K$, and write $M_K^\infty$ and $M_K^0$ respectively for the sets of archimedean and non-archimedean places. For each $v \in M_K$, we normalise an absolute value $| \cdot |_v$, so that any $x \in K \setminus \{ 0 \}$ satisfies the \emph{product formula}
$$
\prod_{v \in M_K} |x|_v = 1.
$$
For each $v \in M_K$, let $\C_v$ be a completion of an algebraic closure of $K_v$, and let $D(a,r) := \{ z \in \C_v : |z-a|_v < r \}$ and $\overline D(a,r) := \{ z \in \C_v : |z-a|_v \leq r \}$ denote respectively the open and closed disks in $\C_v$ centred at $a \in \C_v$ of radius $r \geq 0$.

\subsection{Main results} 

For a rational function $\rat \in K(z)$ and a point $\alpha \in \overline K$, define
\begin{equation} \label{eq:defrelsupreper}
\relsupreper{\rat}{S}{\alpha} := \{ \beta \in \mathrm{PrePer}(\rat, \overline K) : \beta \text{ is $S$-integral relative to } \alpha \}
\end{equation}
and recall that our main goal is to obtain upper bounds for this quantity. We obtain the following.

\begin{theorem} \label{thm:main}
Let $K$ be a number field, and let $\pol \in K[z]$ be a polynomial of degree $d \geq 2$. Let $\canheight{\pol}$ denote the canonical height with respect to $\pol$ (see \textsection \textsection \ref{subsec:Heights}), and let $S$ be a finite set of places of $K$ containing all the archimedean ones, as well as all the places of bad reduction for $\pol$. Suppose that $\alpha \in \overline K$ is not preperiodic under $\pol$, and is totally Fatou at all places $v \in M_K$. 


Then there exists an effectively computable constant
$$
P = P \left( \pol, \alpha, S \right)
$$
such that
$$
|\relsupreper{\pol}{S}{\alpha}| \leq P.
$$
\end{theorem}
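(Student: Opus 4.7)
The strategy is to combine the $S$-integrality hypothesis with a quantitative equidistribution theorem for Galois orbits of small-height points, so as to produce an effective upper bound on $n := [K(\beta):K]$ for any $\beta \in \relsupreper{\pol}{S}{\alpha}$. Once $n$ is bounded, an effective form of Northcott's theorem bounds the number of such $\beta$.

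To set things up, by enlarging $S$ by an effective finite amount I may assume that $\alpha$ is $v$-integral at every $v \notin S$. Together with the hypothesis that $S$ contains all places of bad reduction, this forces $\loccanheight{v}{\pol}(\alpha) = 0$ and places every preperiodic point of $\pol$ in the closed unit disk of $\C_v$ for each $v \notin S$. In particular,
$$
\canheight{\pol}(\alpha) = \sum_{v \in S} \loccanheight{v}{\pol}(\alpha).
$$
For any $\beta \in \relsupreper{\pol}{S}{\alpha}$ of degree $n$ over $K$, $S$-integrality yields $|\sigma(\beta) - \alpha|_v \geq 1$ for every $v \notin S$ and every embedding $\sigma:K(\beta)\hookrightarrow \C_v$, so
$$
\sum_{v \notin S} \frac{1}{n}\sum_\sigma \log|\sigma(\beta) - \alpha|_v \;\geq\; 0.
$$
Combining with the product formula applied to $\beta - \alpha \in K(\beta)^\times$ (equivalently, evaluating the minimal polynomial of $\beta$ over $K$ at $\alpha$) gives
$$
\sum_{v \in S} \frac{1}{n}\sum_\sigma \log|\sigma(\beta) - \alpha|_v \;\leq\; 0.
$$

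The heart of the argument is to apply, at each $v \in S$, a quantitative equidistribution theorem in the Berkovich formalism (of the type of Favre--Rivera-Letelier) to the test function $z \mapsto \log|z - \alpha|_v$. Since $\alpha$ is totally Fatou at $v$, this function is Lipschitz on a Berkovich neighborhood of the Julia set of $\pol$ at $v$, which supports the equilibrium measure $\mu_{\pol,v}$ whose logarithmic potential is $\loccanheight{v}{\pol}$. One therefore obtains
$$
\left| \frac{1}{n}\sum_\sigma \log|\sigma(\beta) - \alpha|_v - \loccanheight{v}{\pol}(\alpha) \right| \leq \varepsilon_v(n),
$$
with $\varepsilon_v(n) \to 0$ effectively as $n \to \infty$ and implicit constants depending explicitly on $\pol$, $v$, and the $v$-adic distance from $\alpha$ to the Julia set. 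Summing over $v \in S$ and combining with the inequality above gives
$$
\canheight{\pol}(\alpha) \;\leq\; \sum_{v \in S} \varepsilon_v(n).
$$
Since $\canheight{\pol}(\alpha) > 0$ by the non-preperiodicity of $\alpha$, this produces an effective bound $n \leq N_0(\pol,\alpha,S)$, after which an effective Northcott bound on preperiodic points of $\pol$ of degree at most $N_0$ completes the proof.

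I expect the main obstacle to lie in the quantitative equidistribution step: producing an error term $\varepsilon_v(n)$ whose dependence on $\pol$, on $v$, and (crucially) on the $v$-adic distance from $\alpha$ to the Julia set is explicit and sharp enough to be useful. The abstract flags precisely this distance as the quantity governing the final bound, so selecting a Berkovich quantitative equidistribution theorem that tracks these constants effectively --- and, in the unicritical application, bounding the distance from below in terms of intrinsic data of $c$ --- is where the bulk of the paper's technical work should lie.
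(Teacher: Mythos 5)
Your overall strategy is the paper's: combine $S$-integrality and the product formula with a quantitative equidistribution theorem of Favre--Rivera-Letelier type, tested against (a regularisation of) $\log|z-\alpha|_v$ at the places of $S$, with the totally Fatou hypothesis taming the logarithmic singularity. The one structural difference is the endgame: you apply equidistribution to each Galois orbit separately, so the error decays in the degree $n=[K(\beta):K]$, and you then need an effective Northcott count of preperiodic points of bounded degree (using $h=\canheight{\pol}+O(1)$ effectively, since preperiodic points have canonical height zero). The paper instead tests the \emph{entire} Galois-invariant set $\cP=\relsupreper{\pol}{S}{\alpha}$ at once: its adelic height vanishes and the equidistribution error decays in $|\cP|$, so one gets a bound on $|\cP|$ directly (Theorem~\ref{thm:main2}) without any Northcott step. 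Both routes are effective; yours is correct in outline but pays a large extra factor through the degree-then-count reduction, while the paper's single application gives the sharper bound.

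There is one point you must repair, and it sits at the crux. The quantity governing the truncation is not the distance from $\alpha$ to the Julia set but $\delta_{\pol,v}(\alpha)$, the distance from $\alpha$ to the nearest \emph{preperiodic point}, which is positive because Fatou preperiodic points are discrete. The equidistribution theorem requires a globally admissible (Lipschitz, resp.\ $\cC^1$) test function, so one must use $\phi_\delta(z)=\log\max\{\delta,|z-\alpha|_v\}$ as in Section~\ref{sec:Trunc}; and for the needed lower bound $\tfrac1n\sum_\sigma\log|\sigma(\beta)-\alpha|_v\geq \loccanheight{v}{\pol}(\alpha)-\varepsilon_v(n)$ the truncation must agree with $\log|\cdot-\alpha|_v$ at \emph{every} conjugate $\sigma(\beta)$, i.e.\ one needs $\delta\leq\delta_{\pol,v}(\alpha)$. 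Truncating at the Julia-set distance, or merely invoking Lipschitz continuity "near the Julia set", is not enough: preperiodic points in the Fatou set (attracting cycles and their iterated preimages) can lie much closer to $\alpha$ than $\cJ_v(\pol)$ does, and a conjugate of $\beta$ sitting there makes your claimed two-sided estimate, with constants depending only on the Julia-set distance, false. With $\delta=\delta_{\pol,v}(\alpha)$ the disk $D(\alpha,\delta)$ also misses $\cJ_v(\pol)=\mathrm{supp}\,\rho_{\pol,v}$, so $\int\phi_\delta\,d\rho_{\pol,v}=\loccanheight{v}{\pol}(\alpha)$ by \eqref{eq:MahlerFormula}, and your argument then goes through. (A minor further point: $\alpha$ need not lie in $K(\beta)$, so the product-formula step should be carried out over $K(\alpha,\beta)$.)
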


It is likely that one can obtain this result without restricting to the case where $\rat$ is a polynomial. However, the decomposition of the canonical height associated to $\rat$ into local heights, which arises when $\rat$ is a polynomial, simplifies the discussion greatly when compared to the general case, where one must analyse dynamical Arakelov-Green's functions appearing for example in \cite{BR1}.

For $\rat \in K(z)$ and $\alpha \in \overline K$, define
\begin{equation} \label{eq:defdeltav}
\delta_{\rat, v}(\alpha) := \min \left \{ 1, \inf_{z \in \mathrm{PrePer}(\rat,\C_v)} |z-\alpha|_v \right \}
\end{equation}
to be the $v$-adic distance of $\alpha$ to the nearest preperiodic point for $\rat$. 

\begin{remark}
Since the set of preperiodic points in the Fatou set of $\pol$ is discrete (see \cite[Proposition~2]{P}), the condition of $\alpha$ being non-preperiodic and totally Fatou at a place $v$ is equivalent to having $\delta_{\pol,v}(\alpha) > 0$.
\end{remark}

We show that in Theorem~\ref{thm:main}, the dependence of the constant $P$ on $\pol$ and $\alpha$ is limited to the canonical height $\canheight{\pol}(\alpha)$ (in fact, the constant improves as $\canheight{\pol}(\alpha)$ grows), the quantities $\delta_{\pol,v}(\alpha)$ for $v \in S$, and certain constants of H\"{o}lder continuous potentials for the invariant probability measure associated to $\pol$ at each place (see Theorem~\ref{thm:main2}). 

In the case of a unicritical polynomial $\unipol(z)=z^2+c$, when $\alpha=0$ is the critical point, we are able to explicitly estimate the dependence on $\unipol$ (especially the quantities $\delta_{\unipol,v}(0)$), and in most instances relate it to the canonical height $\canheight{\unipol}(0)$. This allows us to obtain the following, more uniform, result for parameters $c$ whose conjugates all lie outside a small region about the boundary of the Mandelbrot set (see \textsection \textsection \ref{subsec:Mandelbrot}). Note that in this case, we do not need to enlarge $S$ to contain the places of bad reduction for $\unipol$.

\begin{theorem} \label{thm:UniformMain}
Let $K$ be a number field, $S$ a finite set of places of $K$ containing the archimedean ones, and let $\unipol(z)=z^2+c$, $c \in K$. Suppose that the critical point $0$ is not preperiodic under $\unipol$. Further, let $\varepsilon > 0$, $t \geq 1$ and suppose that for all archimedean places $v \in M_K^\infty$, either $\loccanheight{v}{\unipol}(0) \geq \varepsilon$ or $c$ lies in a hyperbolic component of the Mandelbrot set of period at most $t$. Then
$$
|\relsupreper{\unipol}{S}{0}| \ll 1,
$$
where the implied constant depends only on $\varepsilon,t,K,S$ and the number of places of bad reduction for $\unipol$ (see Theorem~\ref{thm:UniformMainExplicit} for explicit values). 

\end{theorem}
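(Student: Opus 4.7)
The plan is to invoke Theorem~\ref{thm:main} (in its refined form Theorem~\ref{thm:main2}) applied to $\unipol$ with the non-preperiodic point $\alpha=0$, and then show that every quantity appearing in the effective constant $P$ can be controlled using only the data $\varepsilon$, $t$, $K$, $S$ and the number of bad reduction places of $\unipol$. As indicated after Theorem~\ref{thm:main}, the dependence of $P$ on $(\unipol,\alpha)$ is limited to $\canheight{\unipol}(0)$ (in a direction favourable for large heights), the local distances $\delta_{\unipol,v}(0)$ for $v\in S$, and the H\"{o}lder continuity constants of the potentials for the equilibrium measures at each place. I would therefore work place-by-place.

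At the non-archimedean places I would appeal to the novel lower bounds on the $v$-adically smallest preperiodic point of $\unipol$ advertised in the abstract. At a place $v$ of good reduction for $\unipol$, a direct analysis of $z^2+\bar c$ in the residue field, together with the fact that preperiodic points reduce to preperiodic points of the reduced map, forces $\delta_{\unipol,v}(0)\geq 1$; at a place of bad reduction (of which there are finitely many) the lower bound necessarily degrades in terms of $|c|_v$, but the resulting total contribution enters only through the \emph{number} of bad places, producing precisely the kind of dependence allowed in the statement. The H\"{o}lder constants for the non-archimedean potentials can be extracted from the same reduction-theoretic arguments.

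At the archimedean places I would split according to the dichotomy in the hypothesis. In the first regime $\loccanheight{v}{\unipol}(0)\geq\varepsilon$, the forward orbit of $0$ escapes to infinity at a rate bounded below by $\varepsilon$, and comparison with the Green's function of the filled Julia set converts this into a lower bound on the distance from $0$ to any preperiodic point depending only on $\varepsilon$. In the second regime, $c$ lies in a hyperbolic component of period at most $t$, so $0$ belongs to the immediate basin of an attracting cycle of period $\leq t$; a direct study of the local dynamics of $\unipol^{t}$ near the attracting cycle shows that the basin has definite size, and that preperiodic points inside it sit at a distance from $0$ controlled by $t$. In each regime the corresponding H\"{o}lder constants for the potential of the equilibrium measure of the filled Julia set can be made explicit.

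The main obstacle is the archimedean analysis in the hyperbolic regime: one must argue that the distance from $0$ to the preperiodic points in the immediate basin does \emph{not} degenerate uniformly as $c$ ranges over hyperbolic components of period at most $t$. This is precisely why the theorem excludes a small neighbourhood of the boundary of the Mandelbrot set, along which arbitrarily thin, high-period hyperbolic components accumulate; inside the allowed parameter region a compactness argument reduces matters to the finitely many hyperbolic components of period up to $t$, on each of which a uniform estimate can be obtained by classical complex-dynamical means.
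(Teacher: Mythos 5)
Your overall strategy is the paper's: feed Theorem~\ref{thm:main2} with place-by-place lower bounds on $\delta_{\unipol,v}(0)$ and with explicit H\"{o}lder data. But two of the steps you rely on are not correct as stated. First, good reduction does \emph{not} force $\delta_{\unipol,v}(0)\geq 1$: every residue class is preperiodic for the reduced map on the finite field $k_v$, so the fact that preperiodic points reduce to preperiodic points gives no separation between $0$ and $\mathrm{PrePer}(\unipol,\C_v)$. Concretely, if $|c|_v<1$ (still good reduction) the fixed point $\beta=\tfrac{1}{2}\bigl(1-\sqrt{1-4c}\bigr)$ has $|\beta|_v=|c|_v$, which can be arbitrarily small, so $\delta_{\unipol,v}(0)$ genuinely degenerates at good places. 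This is exactly why Section~\ref{sec:nonarch} carries out the attracting/indifferent analysis (Newton polygons, quasi-periodicity, Lemmas~\ref{lem:AttractSmallCoeffs} and~\ref{lem:SmallCoeffs}), and why the resulting bound (Corollary~\ref{cor:nonArchDeltavBound}) has the shape $-\log\delta_v\leq A_v+B_v\,\canheight{\unipol}(0)$, i.e.\ it is allowed to degrade with $h(c)$ provided the degradation is linear in the canonical height.

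Second, in the hyperbolic regime your claim that preperiodic points in the basin lie at distance from $0$ ``controlled by $t$'', proved by compactness over the finitely many components of period $\leq t$, fails. The hypothesis admits $c$ arbitrarily close to the centre of a component, where the attracting periodic point (which is itself preperiodic) collides with the critical point: in the main cardioid $\delta_v\approx|c|_v\to 0$ as $c\to 0$; similarly the basin-size estimates degenerate as the multiplier satisfies $|\lambda|\to 1$ near the root of the component. So no lower bound depending only on $t$ exists, and your reading of the excluded region is off: parameters near $\partial\cM$ \emph{inside} a period-$\leq t$ component are allowed and must be handled, while what is excluded is the part of a neighbourhood of $\partial\cM$ not covered by either alternative of the hypothesis. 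The paper's resolution is the missing idea in your proposal: Proposition~\ref{prop:InMandelBound} bounds $\delta_v$ below by an explicit function of $|\lambda|$ (Koebe distortion, Schwarz, converge-radius estimates), and then Ingram's critical-height theorem (Lemma~\ref{lem:CritHeightVSMultiplierHeight}) converts $-\log|\lambda|$, respectively $-\log|\lambda-\xi|$ for a nearby root of unity $\xi$, into $C_4\canheight{\unipol}(0)+C_5$, giving Proposition~\ref{prop:DeltavBoundParamIhHypComp}: $-\log\delta_v\leq A_{\infty,2}+B_{\infty,2}\canheight{\unipol}(0)$. Such height-linear bounds (also for the H\"{o}lder data, Lemma~\ref{lem:HoldPot}) only suffice because of the uniform lower bound $\canheight{\unipol}(0)\geq C_0$ of Corollary~\ref{cor:CanHeightLowBound}, depending only on $K$ and the number of bad places, which your proposal never invokes; without the multiplier-to-critical-height step and this height gap, the argument does not close.
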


Note that Theorem~\ref{thm:UniformMain} can be extended to $\unipol(z)=z^d+c$ for arbitrary $d \geq 2$, but we restrict to the case $d=2$ to avoid messy computations. Also, for parameters $c$ outside the Mandelbrot set, nothing special about the critical point is used in the proof of Theorem~\ref{thm:UniformMain}. It is therefore possible to obtain an analogue of Theorem~\ref{thm:UniformMain} with the critical point 0 replaced by an arbitrary non-preperiodic point $\alpha$, for parameters $c$ lying outside the corresponding bifurcation locus.

\begin{remark}
For a parameter $c \in K$ and an archimedean place $v$, if $c$ lies in the $v$-adic Mandelbrot set, then $\loccanheight{v}{\unipol}(0) = 0$. For $c$ outside the Mandelbrot set, it would be of interest to give a uniform estimate of the $v$-adic escape rate $\loccanheight{v}{\unipol}(0)$ in terms of the distance to the Mandelbrot set. However, this seems untractable currently, as it would solve in the affirmative the well-known open problem of the Mandelbrot set being locally connected; see \cite[Chapter~VIII,~Theorem~4.2]{CG}.
\end{remark}



Theorem~\ref{thm:UniformMain} produces the following completely uniform bound when $c$ is an integer.

\begin{corollary} \label{cor:IntMain}
Let $K = \Q$ and let $\unipol(z) = z^2 + c$ and $c \in \Z$ such that $0$ is not preperiodic for $\unipol$ i.e. $c \neq 0,-1,-2$. Let $S$ be a finite set of places of $\Q$ containing the archimedean place $\infty$. Then
$$
| \relsupreper{\unipol}{S}{0} | \ll 1,
$$
where the implied constant depends only on $S$ (see Corollary~\ref{cor:IntMain2} for explicit values).
\end{corollary}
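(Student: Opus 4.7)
\emph{Proof proposal.} The plan is to derive Corollary~\ref{cor:IntMain} as a direct specialization of Theorem~\ref{thm:UniformMain} to $K=\Q$, by verifying the hypotheses of the latter with \emph{universal} choices of $\varepsilon$ and $t$ that do not depend on $c$. Since $\unipol(z)=z^2+c$ is monic with integer coefficients, it has good reduction at every non-archimedean place of $\Q$, so the number of places of bad reduction for $\unipol$ is zero. Consequently the implied constant furnished by Theorem~\ref{thm:UniformMain} will depend only on $\varepsilon$, $t$, and $S$, and the task reduces to fixing these universally.

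The field $\Q$ has a unique archimedean place $v=\infty$. Because the real slice of the Mandelbrot set $\cM$ is the interval $[-2,1/4]$, we have $\cM\cap\Z=\{-2,-1,0\}$, which coincides precisely with the set of integer parameters for which $0$ is preperiodic under $\unipol$, and is therefore excluded by hypothesis. Hence every admissible $c$ lies outside $\cM$, and in particular $\loccanheight{\infty}{\unipol}(0)>0$. Thus for the Mandelbrot alternative in Theorem~\ref{thm:UniformMain} we may take any $t\geq 1$; what remains is to produce a positive lower bound on $\loccanheight{\infty}{\unipol}(0)$ that is uniform in $c$.

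The main step, and the only nontrivial obstacle, is therefore to exhibit an absolute $\varepsilon_0>0$ with $\loccanheight{\infty}{\unipol}(0)\geq\varepsilon_0$ for every admissible $c\in\Z$. Since $\unipol$ has good reduction at every $v\in M_\Q^0$ and $0\in\Z$, one has $\loccanheight{v}{\unipol}(0)=0$ for all finite $v$, so $\loccanheight{\infty}{\unipol}(0)=\canheight{\unipol}(0)$, and the bound reduces to an explicit archimedean escape-rate estimate. For $|c|\geq 3$ one checks $|\unipol^2(0)|=|c|\cdot|c+1|\geq|c|(|c|-1)\geq 2|c|$ and $\geq 6$, after which iterating the bound $|z^2+c|\geq|z|^2/2$ (valid once $|z|\geq\sqrt{2|c|}$) gives $|\unipol^n(0)|\geq|\unipol^2(0)|^{2^{n-2}}/2^{2^{n-2}-1}$ for $n\geq 2$. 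Passing to the limit in the definition of $\canheight{\unipol}$ then yields $\canheight{\unipol}(0)\geq\frac{1}{4}\log(|\unipol^2(0)|/2)\geq\frac{1}{4}\log 3$. The only remaining admissible integer parameters are $c\in\{1,2\}$, and these are treated by direct computation of a few iterates (for $c=1$ one starts the escape iteration at $n=3$ with $|\unipol^3(0)|=5$, giving $\canheight{\unipol}(0)\geq\tfrac{1}{8}\log(5/2)$; for $c=2$ the $|c|\geq 3$ argument already applies from $n=2$). Taking $\varepsilon_0$ to be the minimum of these finitely many positive lower bounds suffices.

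With $\varepsilon=\varepsilon_0$ and $t=1$ chosen universally, Theorem~\ref{thm:UniformMain} applies uniformly in $c$, and the resulting constant depends only on $S$, as claimed. The hard part of the argument is just this uniformity of the archimedean escape rate: the straightforward estimate $|z^2+c|\geq|z|^2/2$ degenerates precisely when $|c|$ is comparable to the iterates, so the small integer parameters must be isolated and handled separately before one obtains a bound that is genuinely independent of $c$.
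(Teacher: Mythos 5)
Your argument is correct: every admissible integer parameter lies outside $\cM$, all finite places are of good reduction so $\loccanheight{\infty}{\unipol}(0)=\canheight{\unipol}(0)$, your escape-rate estimates do give an absolute $\varepsilon_0>0$ (the worst case being $c=1$ with $\tfrac18\log(5/2)$), and Theorem~\ref{thm:UniformMain} with $\varepsilon=\varepsilon_0$, $t=1$, $K=\Q$ and zero places of bad reduction then yields a constant depending only on $S$. This is the same overall strategy as the paper (specialize Theorem~\ref{thm:UniformMain} to $c\in\Z$), but the verification at the archimedean place is handled differently. You route everything through the $\varepsilon$-alternative, i.e.\ a uniform lower bound on the escape rate, which inside the machinery means Corollary~\ref{cor:ArchOutParamDeltavBound} and hence a bound on $-\log\delta_\infty$ of size roughly $\varepsilon_0^{-1}$-logarithmic; the paper instead proves the explicit version (Corollary~\ref{cor:IntMain2}) by bounding $\delta_\infty$ directly via Lemma~\ref{lem:JuliaSetDistance} (for integer $c$ with $0$ not preperiodic one has $\delta_\infty\geq 1/2$, so $A_\infty=\log 2$, $B_\infty=0$), and by quoting the trivial integer height bound of Lemma~\ref{lem:IntCanHeightBounds} (giving $C_0=1/4$, $C_1=4$, $C_2=0$) rather than re-deriving a uniform height lower bound as you do. Both routes are legitimate for the qualitative statement $|\relsupreper{\unipol}{S}{0}|\ll 1$; what the paper's route buys is a noticeably sharper explicit constant in Corollary~\ref{cor:IntMain2}, since the direct distance bound $\delta_\infty\geq 1/2$ is much stronger than what the H\"older-continuity bound of Corollary~\ref{cor:ArchOutParamDeltavBound} produces at $\varepsilon\approx 0.11$, while your route has the merit of making the uniform lower bound on $\canheight{\unipol}(0)$ (stated without proof in Lemma~\ref{lem:IntCanHeightBounds}) completely explicit.
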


Finally, we state our result on $S$-units in dynamical sequences. Let $\cO_S$ and $\cO_S^*$ denote respectively the sets of $S$-integers and $S$-units in $K$. In \cite{Si3}, Silverman showed that for any $\rat \in K(z)$ of degree at least two such that $\rat^2 \notin K[z]$, and any $\alpha \in K$, $\{ \rat^n(\alpha) : n \geq 0 \} \cap \cO_S$ is finite. However, the size of this intersection cannot be bounded independent of $\rat$, even if we restrict to functions of a fixed degree. Such a bound (i.e. depending only on $|S|$ and $\deg(\rat)$) may be possible if we replace $\cO_S$ with $\cO_S^*$. This was conjectured in \cite{KLS}, and proved in the case of a monic polynomial $\pol$ with coefficients in $\cO_S$. We proceed to consider, in place of the orbit of $\alpha$, the doubly-indexed sequence $\{ \rat^n(\alpha) - \rat^m(\alpha) \}_{n > m \geq 0}$. It is interesting to observe that we can relate the number of $S$-units in this sequence to the number of $\rat$-preperiodic points which are $S$-integral relative to $\alpha$.

\begin{theorem} \label{thm:SunitBound}
Let $K$ be a number field, let $\pol \in K[z]$ and let $\alpha \in K$. Let $S$ be a finite set of places of $K$ containing all the archimedean ones, and let
$$
\disunit{\pol}{S}{\alpha} : = \{ \pol^n(\alpha) - \pol^m(\alpha) \}_{n > m \geq 0}.
$$
Then
$$
|\disunit{\pol}{S}{\alpha} \cap \cO_S^*| \leq \frac{(2 |\relsupreper{\pol}{S}{\alpha}|+1)^2-1}{8},
$$
where we recall $\relsupreper{\pol}{S}{\alpha}$ is defined in \eqref{eq:defrelsupreper}.
\end{theorem}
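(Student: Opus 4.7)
The plan is to use the factorization of iterated polynomial differences to translate the $S$-unit condition on $\pol^n(\alpha) - \pol^m(\alpha)$ into an $S$-integrality statement for certain preperiodic points of $\pol$, and then bound the admissible pairs $(n,m)$ combinatorially.

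First, I would invoke the factorization
$$
\pol^n(z) - \pol^m(z) = c_{n,m} \prod_\beta (z - \beta)^{e_\beta}
$$
in $\overline K[z]$, where the distinct roots $\beta$ are the preperiodic points of $\pol$ with tail length at most $m$ and eventual period dividing $n - m$, and $c_{n,m}$ is the leading coefficient. Suppose that $\pol^n(\alpha) - \pol^m(\alpha) \in \cO_S^\ast$. Evaluating the factorization at $\alpha$ and taking $v$-adic absolute values for each $v \notin S$ gives $1 = |c_{n,m}|_v \prod_\beta |\alpha - \beta|_v^{e_\beta}$. After a careful local analysis handling the leading coefficient contribution and the valuations at places of bad reduction for $\pol$ (with a possible reduction to the case $\pol \in \cO_S[z]$, $\alpha \in \cO_S$, while controlling the resulting effect on $|\relsupreper{\pol}{S}{\alpha}|$), one sees that each factor on the right must equal $1$. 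In particular, every distinct preperiodic root $\beta$ satisfies $|\alpha - \beta|_v = 1$ for all $v \notin S$, so $\beta$ is $S$-integral relative to $\alpha$ and lies in $\relsupreper{\pol}{S}{\alpha}$.

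Next, I would count the pairs $(n,m)$. Each valid pair forces the preperiodic points of a specific (tail, period)-type to lie in the $N$-element set $\relsupreper{\pol}{S}{\alpha}$ (where $N = |\relsupreper{\pol}{S}{\alpha}|$), a very restrictive condition. I would then construct a combinatorial injection sending each $S$-unit value in $\disunit{\pol}{S}{\alpha}$ to an unordered $2$-subset of an $(N+1)$-element set---for example, $\relsupreper{\pol}{S}{\alpha}$ together with a distinguished base element associated to $\alpha$---yielding the asserted bound $\binom{N+1}{2} = \frac{(2N+1)^2 - 1}{8}$.

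The main obstacle is this final combinatorial step: one must carefully select the $(N+1)$-element index set and the association rule so that distinct $S$-unit values map to distinct $2$-subsets, thereby producing the tight bound $\binom{N+1}{2}$ rather than the cruder $N^2$ one gets from a naive two-parameter count. A secondary difficulty is the $v$-adic analysis at places of bad reduction for $\pol$, where the leading coefficient $c_{n,m}$ can contribute nontrivially and the valuations of the preperiodic roots must be balanced against each other to preserve the $S$-integrality conclusion.
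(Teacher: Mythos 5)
Your first step is essentially the paper's Lemma~\ref{lem:Suint}: if $\pol^n(\alpha)-\pol^m(\alpha)\in\cO_S^*$, then every root of $\pol^n-\pol^m$ is $S$-integral relative to $\alpha$ and hence lies in $\relsupreper{\pol}{S}{\alpha}$. (The paper proves this by expanding $\pol^n-\pol^m$ about $\alpha$ and using the ultrametric inequality, after first disposing of the case $|\alpha|_v>1$ for some $v\notin S$; your ``each factor equals $1$'' shortcut needs the observation that at a good-reduction place each preperiodic root satisfies $|\beta|_v\le 1$, so each $|\alpha-\beta|_v\le 1$, and it does require $S$ to contain the places of bad reduction, as in the lemma.) So up to this point you are on the paper's track.

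The genuine gap is the second half. Your proposed ``combinatorial injection sending each $S$-unit value to an unordered $2$-subset of an $(N+1)$-element set'' is not constructed, and nothing you have established makes it constructible: knowing only that all roots of $\pol^n-\pol^m$ lie in the fixed finite set $\cP=\relsupreper{\pol}{S}{\alpha}$ places, by itself, no restriction whatsoever on which pairs $(n,m)$ can occur, since a priori arbitrarily many pairs could share the same small root set. The missing idea is the paper's Lemma~\ref{lem:distRootBound}: a lower bound $\z(\pol^n-\pol^m)\ge \max\{1,n-m-2\}\max\{1,d^{m-1}\}\ge n$ on the number of \emph{distinct} roots, proved by combining Baker's theorem (existence of cycles of every exact period, with one explicit exception) with the Fuchs--Peth\H{o} bound $\z(g\circ h)\ge \max\{1,\z(g)-2\}\deg h+1$ applied to $g=\pol^{n-m}-\pol^0$, $h=\pol^m$. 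Once you have this, no injection into $2$-subsets of a point set is needed: the quantity $\binom{N+1}{2}=\frac{(2N+1)^2-1}{8}$ simply counts the pairs $(n,m)$ with $0\le m<n\le N$, where $N=|\cP|$. Distinct $S$-unit values come from distinct pairs, and any pair with $n>N$ is impossible because it would force at least $n>N$ distinct preperiodic points into $\cP$; hence $|\disunit{\pol}{S}{\alpha}\cap\cO_S^*|$ is at most the number of pairs with $n\le N$, which is the asserted bound. Without a root-count lower bound of this kind (or some substitute), your counting step cannot be completed, and indeed you flag exactly this step as the main obstacle.
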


\begin{remark}
In particular, for $\unipol(z) = z^2 + c$, $c \in \Z$, Corollary~\ref{cor:IntMain} implies
$$
| \disunit{\unipol}{S}{0} \cap \cO_S^* | \ll 1,
$$
with the implied constant depending only on $S$. 
\end{remark}

\subsection{Proof strategy and outline of paper} 

In Section~\ref{sec:Prelim}, we introduce notation and results requisite for potential theory on the Berkovich projective line at each place of $K$, as well as various useful notions and propositions relevant to heights of algebraic numbers. This allows us to proceed in Section~\ref{sec:QuantEqui} to make explicit the constants in Favre and Rivera-Letelier's quantitative equidistribution theorem \cite[Theorem~7]{FRL} for the adelic measure associated to a polynomial $\pol$, and in particular for the unicritical polynomial $\unipol(z)=z^2+c$. To prove Theorem~\ref{thm:main}, we assume on the contrary that the set $\cP:=\relsupreper{\pol}{S}{\alpha}$ of preperiodic points for $\pol$ which are $S$-integral relative to $\alpha$ is very large. The idea is to then test these preperiodic points against an extension of $\log |z-\alpha|_v$ at each place $v \in S$ using the equidistribution theorem. Then the quantity
$$
\Gamma := \frac{1}{|\cP|} \sum_{v \in M_K} \sum_{z \in \cP} \log | z - \alpha |_v,
$$
which can be shown to be 0 using the product formula, is very close to the canonical height of $\alpha$ with respect to $\pol$, and is hence positive (since $\alpha$ is not preperiodic), a contradiction. The only problem with this approach is that our candidate test function has a logarithmic singularity at (embeddings of) $\alpha$. As done by Petsche in \cite{P}, we get around this by making the assumption that $\alpha$ is totally Fatou at all places. In this case, since at each place the measure associated to $\pol$ is supported on its Julia set, and the preperiodic points of $\pol$ are discrete in the Fatou set, it is possible to construct a suitable truncation function which is sufficiently regular to apply our quantitative equidistribution theorem, and agrees with $\log |z-\alpha|_v$ at all preperiodic points, and on the support of the measure associated to $\pol$. We construct this truncation function in Section~\ref{sec:Trunc}, essentially by cutting out a disk of radius $\delta_{\pol,v}(\alpha)$. We then give the proof of Theorem~\ref{thm:main} in Section~\ref{sec:MainProof}. 


Most of the rest of the paper concerns a unicritical polynomial $\unipol(z)=z^2+c$, and is devoted to obtaining lower bounds for $\delta_v = \delta_{\unipol,v}(0)$ of an appropriate form at each relevant place, in order to eventually prove Theorem~\ref{thm:UniformMain} in Section~\ref{sec:Unif}. In Section~\ref{sec:arch} we deal with the archimedean places $v$, treating separately two different cases. Firstly, when the parameter $c$ lies outside the $v$-adic Mandelbrot set, we use properties of the geometry of the Julia set of $f$, as well as the H\"{o}lder continuity of its Green function to bound $\delta_v$ in terms of the $v$-adic escape rate $\loccanheight{v}{\unipol}(0)$.  When $c$ lies inside a hyperbolic component of the $v$-adic Mandelbrot set, we use distortion theorems from complex analysis together with uniformisation results from complex dynamics to bound $\delta_v$ in terms of the multiplier and the period of the component. We then remove the dependence on the multiplier using properties of the critical height found in \cite{I2}. In Section~\ref{sec:nonarch}, we apply techniques from non-archimedean analysis and dynamics to bound $\delta_v$ when $v$ is a finite place of good reduction.


Finally, we prove Theorem~\ref{thm:SunitBound} in Section~\ref{sec:Sunit} using an elementary argument, together with a lower bound on the number of distinct roots of $\pol^n(z)-\pol^m(z)$, $n > m \geq 0$.

\subsection{Acknowledgements} The author is indebted to his supervisor Holly Krieger for her support and guidance on this project, and to Rob Benedetto for useful discussions relevant to \textsection \ref{sec:nonarch}. 

\section{Preliminaries} \label{sec:Prelim}

\subsection{Reduction of rational maps} \label{subsec:Reduction}

Given a non-archimedean place $v$ of $K$, denote the \emph{ring of integers} and \emph{residue field} of $K$ with respect to $v$ respectively by $\cO_v$ and $k_v$. Given a rational function $\rat: \P^1 \to \P^1$ of degree $d$ defined over $K$, represented as
$$
\rat([x,y]) = [g(x,y),h(x,y)],
$$
where $g,h \in \cO_v[x,y]$ are homogeneous polynomials of degree $d$ with no common irreducible factors in $K[x,y]$ and at least one coefficient of $g$ or $h$ has $v$-adic absolute value 1, we say that $\rat$ has \emph{good reduction} at $v$ if the reductions of $f$ and $g$ modulo $v$ are relatively prime in $k_v[x,y]$. Otherwise we say that $\rat$ has \emph{bad reduction} at $v$. Let $M^{0,\rat}_{K,\mathrm{good}}$ denote the set of places of $K$ at which $\rat$ has good reduction, and let $M^{0,\rat}_{K,\mathrm{bad}} := M^0_K \setminus M^{0,\rat}_{K,\mathrm{good}}$. Note that the polynomial $\unipol(z) = z^d+c$, $c \in K$ has good reduction at $v$ if and only if $|c|_v \leq 1$.

\subsection{Fatou and Julia sets} \label{subsec:FatouJulia} 
Let $v \in M_K$. The \emph{Fatou set} $\cF_v(\rat)$ associated to a rational function $\rat$ is the largest open subset of $\P^1(\C_v)$ such that $\{ \rat^n \}_{n=1}^\infty$ is equicontinuous with respect to the spherical metric at all $z \in \cF_v(\rat)$. The \emph{Julia set} $\cJ_v(\rat)$ is the complement of the Fatou set. In the case of a polynomial $\pol$, $\cJ_v(\pol)$ is the boundary of the \emph{filled Julia set} 
$$
\cK_v(\pol) := \{ z \in \P^1(\C_v) : |\pol^n(z)| \not \to \infty \}
$$
for $\pol$. If $v \in M_K^0$ is a place at which $\rat$ has good reduction, then $\cJ_v(\rat) = \emptyset$. Note also that $\cJ_v(\rat)$ is contained in the closure of the set of all periodic points for $\rat$. In particular, $D(\alpha, \delta_{\rat,v}(\alpha)) \cap \cJ_v(\rat) = \emptyset$. For details about the Fatou and Julia sets, see \cite{B} \textsection \textsection 1.6.2 and Chapter~5. 

\subsection{The Mandelbrot set} \label{subsec:Mandelbrot} 
The \emph{Mandelbrot set} (see \cite{DH}) $\cM_d \subset \C$ is defined as
\begin{align*}
\cM & = \{ c \in \C : |\unipol^n(0)| \text{ is bounded} \} \\
& = \{ c \in \C : \text{the Julia set of $\unipol$ is connected} \},
\end{align*}
where $\unipol(z)=z^2+c$. We say that a connected component $\cH$ of the interior of $\cM$ is a \emph{hyperbolic component} of period $t$ if for all parameters $c \in \cH$, the map $\unipol(z)=z^2+c$ has an attracting periodic cycle of period $t$. Such a cycle, if it exists, must in fact be unique. For each archimedean place $v$ of $K$, by identifying $\C_v$ with $\C$, we obtain a $v$-adic Mandelbrot set $\cM_v \subset \C_v$.

\subsection{The Riemann sphere} \label{subsec:RiemannSphere} At an archimedean place $v$ of $K$, we identify $\P^1(\C_v)$ with $\P^1(\C)$, which we consider as a Riemannian 2-manifold by equipping it with the typical atlas and the Fubini-Study metric (see for example \cite[\textsection 4.1]{NC}), which induces an inner product $\langle \cdot, \cdot \rangle_g$ on the tangent space, and we let $\mu$ denote the volume element on $\P^1(\C)$. We extend the induced operators $\nabla$ and $\Delta$ (gradient and Laplacian respectively) to arbitrary distributions on $\P^1(\C)$ in the usual way (see for example \cite[\textsection 5.2]{NC}).




We say that a signed measure $\rho$ on $\P^1(\C)$ has \emph{continuous potentials} if there is a continuous function $h : \P^1(\C) \to \R$ such that $\Delta h = \rho - \lambda$ in the sense of distributions, where $\lambda$ is Lebesgue measure on the unit circle.

Given two signed measures $\rho$ and $\rho'$ on $\P^1(\C)$ such that $\log |z-w|$ is integrable with respect to $|\rho| \otimes |\rho'|$ on $\C \times \C \setminus \mathrm{Diag}$ (where $\mathrm{Diag} = \{ (z,z) : z \in \C \}$), we define the \emph{mutual energy} of $\rho$ and $\rho'$ by
$$
(\rho,\rho')=-\int_{\C \times \C \setminus \mathrm{Diag}} \log |z-w| d\rho(z) d \rho'(w).
$$

For any functions $f,h$ on $\P^1(\C)$ whose weak gradients exist, we define their \emph{Dirichlet form} by
$$
\langle f, h \rangle = \frac{1}{2\pi} \int_{\P^1(\C)} \langle \nabla f, \nabla h \rangle_g d \mu,
$$
which can be calculated in charts as (see \cite[Definition~2.17]{NC})
\begin{align} \label{eq:DirichletForm}
\langle f, h \rangle = \int_{\overline D(0,1)} & \left( \frac{\partial f_0}{\partial x} \frac{\partial h_0}{\partial x} + \frac{\partial f_0}{\partial y} \frac{ \partial h_0}{\partial y} \right) dx dy \notag \\
& + \int_{D(0,1)} \left( \frac{\partial f_1}{\partial x} \frac{\partial h_1}{\partial x} + \frac{\partial f_1}{\partial y} \frac{ \partial h_1}{\partial y} \right) dx dy,
\end{align}
where $f_0(x,y)=f([x+iy,1])$ and $f_1(x,y)=g([1,x+iy])$, and $[ \cdot, \cdot]$ are homogeneous coordinates on $\P^1(\C)$.

Let $H^1(\P^1(\C))$ denote the Sobolev space consisting of square integrable functions $\phi$ on $\P^1(\C)$ with weak gradients, satisfying $\langle \phi,\phi \rangle < \infty$ (equivalently in local coordinates, the weak partial derivatives of $\phi$ exist and are square integrable). We have the following Cauchy-Schwarz inequality

\begin{lemma} \label{lem:ArchCauchySchwarz}
For every function $\phi \in H^1(\P^1(\C))$ and every signed measure $\rho$ with $\rho(\P^1(\C)) = 0$ and such that $|\rho|$ has continuous potentials, we have
$$
\left| \int_{\P^1(\C)} \phi \, d \rho \right|^2 \leq \langle \phi, \phi \rangle (\rho, \rho).
$$
\end{lemma}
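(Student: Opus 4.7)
The plan is to reduce the inequality to the standard Cauchy-Schwarz inequality for the Dirichlet bilinear form $\langle \cdot, \cdot \rangle$ on $H^1(\P^1(\C))$, by constructing a potential $U$ for $\rho$ so that $\int \phi \, d\rho$ can be written as $\langle \phi, U \rangle$ up to a universal constant.

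First I would construct such a potential. Using the Hahn decomposition $\rho = \rho^+ - \rho^-$, the hypothesis that $|\rho| = \rho^+ + \rho^-$ has continuous potentials together with the lower semicontinuity of logarithmic potentials of positive measures shows that $\rho^+$ and $\rho^-$ separately admit continuous potentials $h^{\pm}$. The zero-mass condition $\rho^+(\P^1(\C)) = \rho^-(\P^1(\C))$ ensures that the contributions involving the reference measure $\lambda$ cancel in the difference, so $U := c_1(h^+ - h^-)$ (with an appropriate constant $c_1$) is a continuous function on $\P^1(\C)$ satisfying $\Delta U = \rho$ in the sense of distributions. Moreover $U$ lies in $H^1(\P^1(\C))$: its Dirichlet energy is proportional to the mutual energy $(\rho,\rho)$, which is finite because the potentials are continuous.

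Next I would establish the integration-by-parts identity
$$
\int_{\P^1(\C)} \phi \, d\rho \;=\; c_2 \, \langle \phi, U \rangle
$$
for a universal constant $c_2$ and every $\phi \in H^1(\P^1(\C))$. For smooth $\phi$ and $U$ this is immediate from the local coordinate formula \eqref{eq:DirichletForm}. In general I would smooth $\rho$ by convolving with a mollifier in local charts to obtain mass-zero measures $\rho_\varepsilon$ with smooth potentials $U_\varepsilon$; since continuous potentials have finite energy, standard regularization estimates give $U_\varepsilon \to U$ in $H^1$, while $\int \phi \, d\rho_\varepsilon \to \int \phi \, d\rho$ by approximation in $\phi$ by smooth test functions, so the identity passes to the limit. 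The same regularization gives $\langle U, U \rangle = c_3 (\rho, \rho)$ after using Fubini and the definition of $(\rho,\rho)$.

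Finally, applying the ordinary Cauchy-Schwarz inequality for the positive semidefinite form $\langle \cdot, \cdot \rangle$ yields
$$
\left| \int_{\P^1(\C)} \phi \, d\rho \right|^2 = c_2^2 \, |\langle \phi, U \rangle|^2 \leq c_2^2 \, \langle \phi, \phi \rangle \langle U, U \rangle = c_2^2 c_3 \, \langle \phi, \phi \rangle (\rho,\rho),
$$
and a bookkeeping check shows $c_2^2 c_3 = 1$, giving the stated inequality. The main obstacle is the rigorous justification of the integration-by-parts identity when $U$ is merely continuous and $\phi$ only lies in $H^1$; the mollifier argument is standard but requires care on $\P^1(\C)$, as one must pass between the flat Euclidean regularization in each coordinate chart and the global Dirichlet form, controlling the cross-terms near the boundary between the two charts in \eqref{eq:DirichletForm}.
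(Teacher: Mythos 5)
Your argument is correct in outline and is essentially the standard potential-theoretic proof: the paper does not reprove this inequality but simply cites \cite[\S 5.5]{FRL} and \cite[Proposition~2.9]{NC} for $\phi \in \cC^1(\P^1(\C))$ and remarks that the extension to $H^1(\P^1(\C))$ is easy, and those cited proofs proceed exactly as you propose, by representing $\int_{\P^1(\C)} \phi \, d\rho$ as a Dirichlet pairing with a continuous potential of $\rho$ and applying Cauchy--Schwarz for the Dirichlet form. The only point to watch is that for $\phi$ merely in $H^1(\P^1(\C))$ (not continuous) the integral $\int \phi \, d\rho$ should be interpreted via a quasi-continuous representative, a subtlety at the same level as the paper's ``can easily be extended'' and harmless in the application, where $\phi$ is Lipschitz.
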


\begin{proof}
This is proved for $\phi \in \cC^1(\P^1(\C))$ in \cite[\textsection \textsection 5.5]{FRL} and \cite[Proposition~2.9]{NC}, but can easily be extended to $\phi \in H^1(\P^1(\C))$.
\end{proof}

Given a rational function $\rat$ of degree $d$, there is a canonical probability measure $\rho_{\rat,v}$ on $\P^1(\C_v)$, called the \emph{equilibrium measure} or \emph{measure of maximal entropy} associated to $\rat$ at $v$, whose support is the Julia set $\cJ_v(\rat)$, and which satisfies $\rat_*( \rho_{\rat,v}) = \rho_{\rat,v}$ and $\rat^*(\rho_{\rat,v}) = d \cdot \rho_{\rat,v}$ (see for example \cite{Br}).

\subsection{The Berkovich projective line} \label{subsec:Berk}

We recall details of this construction as presented in \cite[\textsection 3]{FRL} and \cite[\textsection 1]{RW}. For each place $v$ of $K$, the \emph{Berkovich affine line} $\BA(\C_v)$ is defined to be the set of multiplicative seminorms $\zeta = \| \cdot \|_\zeta$ on the polynomial ring $\C_v[T]$ that extend the absolute value $| \cdot |_v$ on $\C_v$, together with the weakest topology such that for every $f \in \C_v[T]$, the function $\zeta \mapsto \| f \|_\zeta$ is continuous (called the \emph{Gel'fand topology}). At archimedean places, we have $\BA(\C_v) \cong \C_v$ (this can be deduced, for example, from the Gelfand-Mazur theorem). Let $v$ be a non-archimedean place. Then the \emph{Berkovich projective line} $\BP(\C_v) = \BA(\C_v) \cup \{ \infty \}$ is a Hausdorff, compact and uniquely path connected topological space containing $\P^1(\C_v)$ as a dense subspace (see \cite[Chapter~6]{B}, or \cite[\textsection 3]{FRL} for details of its construction and properties). In particular, $\BP(\C_v)$ has the structure of an $\R$-tree (see \cite[Appendix~B]{B}). Given a disk $D(a,r) \subseteq \C_v$, there is a corresponding point $\zeta(a,r) \in \BA(\C_v)$, whose seminorm is given by
$$
\| f \|_{\zeta(a,r)} = \sup \{ |f(z)|_v : z \in D(a,r) \}.
$$ 
The point $\zeta_G := \zeta(0,1)$ is called the \emph{Gauss point}.

By Berkovich's classification theorem \cite[Theorem~6.9]{B}, all points of $\BA(\C_v)$ either correspond to a disk $D(a,r)$ (and are respectively called Type I, II, and III points when $r=0$, $r \in |\C_v^\times|$ and $r \notin |\C_v^\times|$), or to (a cofinal equivalence class of) a sequence of nested disks $D(a_n,r_n)$ with empty intersection. A point $\zeta \in \BA(\C_v)$ of the latter kind, called Type IV, has seminorm given by
$$
\| f \|_\zeta = \lim_{n \to \infty} \| f \|_{\zeta(a_n,r_n)}.
$$
For $\zeta \in \BA(\C_v)$, the \emph{absolute value} of $\zeta$ is $|\zeta|:= \| T \|_\zeta$ and the \emph{diameter} of $\zeta$ is
$$
\diam(\zeta) := \inf \{ \| T - a \|_\zeta : a \in \C_v \}.
$$

As $\BP(\C_v)$ is uniquely path-connected, given any $x,y \in \BP(\C_v)$ we will denote by $[x,y]$ the unique path between $x$ and $y$, and denote by $(x,y)$, $[x,y)$ and $(x,y]$ the corresponding open and half-open paths. Moreover, if we fix a point $\zeta \in \BP(\C_v)$, the set $[x,\zeta] \cap [y,\zeta] \cap [x,y]$ consists of a single point (see \cite[Proposition~6.35]{B}) which we denote by $x \vee_\zeta y$. We write $\vee$ for $\vee_\infty$ and $\vee_G$ for $\vee_{\zeta_G}$.

\subsection*{Distances on $\BP(\C_v)$}

The set $\H_v : = \BP(\C_v) \setminus \P^1(\C_v)$ is called the \emph{hyperbolic space} over $\C_v$, and admits the following metric, called the \emph{logarithmic path distance}:
\begin{equation} \label{eq:defPathDistance}
d_{\H_v}(x,y) = 2 \log \diam(x \vee y) - \log \diam(x) - \log \diam(y).
\end{equation}
We can extend $d_{\H_v}$ to a singular metric on $\BP(\C_v)$ by declaring that for $x \in \P^1(\C_v)$ and $y \in \BP(\C_v)$, we have $d_{\H_v}(x,y) = \infty$ if $x \neq y$ and 0 otherwise.

We can also define the diameter of a point $x \in \BP(\C_v)$ with respect to the Gauss point $\zeta_G$ by
\begin{equation} \label{eq:defGaussDiam}
\diam_G(x) = \exp(-d_{\H_v}(\zeta_G,x)),
\end{equation}
and subsequently obtain a metric $\ud$ proportional to that defined by Favre and Rivera-Letelier \cite[\textsection \textsection 4.7]{FRL} on $\BP(\C_v)$, given by
\begin{equation} \label{eq:FRLDist}
\ud(x,y) = 2 \, \diam_G(x \vee_G y) - \diam_G(x) - \diam_G(y).
\end{equation}
This satisfies $0 \leq \ud(x,y) \leq 2$ and $\ud(x,y) = 2 \| x,y \|$ when $a,b \in \P^1(\C_v)$ (see \cite[Proposition~1.1]{RW}), where $\| \cdot, \cdot \|$ denotes the spherical metric on $\P^1(\C_v)$. Both $d_{\H_v}$ and $\ud$ are additive on paths in the sense that if $z$ is any point in $[x,y]$, we have $d_{\H_v}(x,y) = d_{\H_v}(x,z) + d_{\H_v}(z,y)$, and similarly for $\ud$.

The \emph{Gromov product} (with the Gauss point $\zeta_G$ as a base point) is the function $\langle \cdot, \cdot \rangle_G : \BP(\C_v) \times \BP(\C_v) \to [0,\infty]$ given by
\begin{equation} \label{eq:GromovDef}
\langle x, y \rangle_G = d_{\H_v}(x \vee_G y,\zeta_G).
\end{equation}

\subsection*{Potential Theory on $\BP(\C_v)$}

Every rational function $\rat: \P^1(\C_v) \to \P^1(\C_v)$ has a unique continuous extension to $\BP(\C_v)$ \cite[\textsection 7.1]{B}. We also have the notion of Julia set of $\rat$ on the Berkovich projective line. We say an open set $U \subseteq \BP(\C_v)$ is \emph{dynamically stable} under $\rat$ if $\bigcup_{n \geq 0} \rat^n(U)$ omits infinitely many points of $\BP(\C_v)$. The \emph{Berkovich Fatou set}, which we denote $\cF_{v,\mathrm{Berk}}(\rat)$, is the subset of $\BP(\C_v)$ consisting of all points $x \in \BP(\C_v)$ having a dynamically stable neighbourhood. Again, the \emph{Berkovich Julia set} is given by $\cJ_{v,\mathrm{Berk}}(\rat) = \BP(\C_v) \setminus \cF_{v,\mathrm{Berk}}(\rat)$. When $v$ is a place of good reduction for $\rat$, we have $\cJ_{v,\mathrm{Berk}}(\rat) = \{ \zeta_G \}$.



Denote by $M_{\mathrm{Rad}}$ the set of signed Radon measures on $\BP(\C_v)$, and let $\rho \in M_{\mathrm{Rad}}$. Then the \emph{potential} of $\rho$ based at $\zeta_G$ is the function $u_\rho : \H_v \to \R$ given by
$$
u_\rho(x) := - \rho(\BP(\C_v)) - \int_{\BP(\C_v)} \langle x, y \rangle_G \, d \rho(y).
$$
Note that the potential of a delta measure at a point $y$, which we denote $[y]$, is given by $u_{[y]}(x) = -1 - \langle x,y \rangle_G$. Let $\cQ$ denote the space of potentials 
$$
\cQ := \{ f: \H_v \to \R : f = u_\rho \text{ for some } \rho \in M_{\mathrm{Rad}} \}.
$$
This is a vector space which contains all functions of the form $\langle \cdot, y \rangle_G$. With this framework, one can define a non-archimedean Laplacian operator on $\BP(\C_v)$ and develop the relevant notions in potential theory. Namely, define the function $\Delta : \cQ \to M_{\mathrm{Rad}}$ by
$$
\Delta u_\rho := \rho - \rho(\BP(\C_v)) \cdot [\zeta_G].
$$
Importantly, we have
\begin{equation} \label{eq:LaplacGromov}
\Delta \langle \cdot, y \rangle_G = [\zeta_G] - [y].
\end{equation}
For $\rho \in M_{\mathrm{Rad}}$, we say that $\rho$ has \emph{continuous potentials} if the associated potential function $u_\rho$ extends to a function on $\BP(\C_v)$ which is continuous with respect to the Gel'fand topology.

Given a pair of measures $\rho, \rho' \in M_{\mathrm{Rad}}$, such that $\log \diam ( \cdot \vee \cdot )$ is integrable with respect to $\rho \otimes \rho'$ on $\BA(\C_v) \times \BA(\C_v) \setminus \mathrm{Diag}$ (where $\mathrm{Diag} = \{ (z,z) : z \in \C_v \}$ is the Type I diagonal), we define the \emph{mutual energy} of $\rho$ and $\rho'$ to be
\begin{equation} \label{eq:defMutualEnergy}
(\rho,\rho') = - \int_{\BA(\C_v) \times \BA(\C_v) \setminus \mathrm{Diag}} \log \diam( x \vee y) \, d \rho(x) d \rho'(y).
\end{equation}
Any rational map $\rat : \P^1(\C_v) \to \P^1(\C_v)$ of degree $d \geq 2$ extends naturally to a map $\rat: \BP(\C_v) \to \BP(\C_v)$. As in the archimedean case, we can associate to $\rat$ an \emph{equilibrium measure} i.e. a probability measure $\rho_{\rat,v}$ on $\BP(\C_v)$ which satisfies $\rat_*(\rho_{\rat,v}) = \rho_{\rat,v}$ and $\rat^*(\rho_{\rat,v}) = d \cdot \rho_{\rat,v}$ (see \cite[Theorem~13.37]{B} or \cite[\textsection \textsection 6.1]{FRL} for details of its construction). Once again, we note that $\mathrm{supp}(\rho_{\rat,v}) = \cJBv(\rat)$.

We say a continuous function $\phi : \BP(\C_v) \to \R$ is $\cC^k$ for $k \geq 1$ if it is locally constant of a finite and closed subtree $\cT \subset \H_v$, and $\phi$ is $\cC^k$ on $\cT$, where for $x \in \cT$ we can define the derivative $\partial \phi(x)$ as the left derivative of $\phi$ at $x$ on the path $[\zeta_G, x]$, parameterised by the distance $d_{\H_v}$, and $\partial \phi(x)=0$ for $x \notin \cT$. Now, for a $\cC^1$ function $\phi$, set
$$
\langle \phi, \phi \rangle := \int_{\BP(\C_v)} (\partial \phi)^2 d \mu,
$$
where $\mu$ is the measure on $\BP(\C_v)$ which is null on $\P^1(\C_v)$, and coincides with Hausdorff measure of dimension 1 on $\H_v$ with respect to the distance $d_{\H_v}$. As in the archimedean case, we have a kind of Cauchy-Schwarz inequality \cite[\textsection \textsection 5.5]{FRL}.

\begin{lemma} \label{lem:nonArchCauchySchwarz}
Let $\phi$ be a $\cC^1$ function on $\BP(\C_v)$, and let $\rho \in M_{\mathrm{Rad}}$ have continuous potentials and satisfy $\rho(\BP(\C_v)) = 0$. Then
$$
\left| \int_{\BP(\C_v)} \phi \, d \rho \right|^2 \leq \langle \phi, \phi \rangle (\rho, \rho).
$$
\end{lemma}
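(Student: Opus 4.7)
My plan is to mirror the standard potential-theoretic proof of the Cauchy-Schwarz energy inequality, adapted to the Berkovich setting. The two main ingredients are (i) an integration-by-parts formula pairing the Laplacian with a $\cC^1$ test function, and (ii) the identification of the mutual energy $(\rho,\rho)$ with the Dirichlet norm of the potential $u_\rho$.

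First, since $\rho(\BP(\C_v)) = 0$, the Laplacian formula from \textsection\textsection\ref{subsec:Berk} gives $\Delta u_\rho = \rho$, so
$$
\int_{\BP(\C_v)} \phi \, d\rho = \int_{\BP(\C_v)} \phi \, d (\Delta u_\rho).
$$
I would next establish the Green-type identity
$$
\int_{\BP(\C_v)} \phi \, d(\Delta u_\rho) = -\langle \phi, u_\rho \rangle,
$$
where the right-hand side is the bilinear extension of the Dirichlet form from $\cC^1 \times \cC^1$ to $\cC^1 \times (\text{continuous potentials})$. To do this, I would first check the identity on the dense subclass $u_\rho = -1 - \langle \cdot, y\rangle_G$, where by \eqref{eq:LaplacGromov} we have $\Delta u_\rho = [\zeta_G] - [y]$, so the identity reduces to a direct computation using that $\phi$ is locally constant off a finite subtree $\cT$ and that $\partial \langle \cdot, y \rangle_G$ on the path $[\zeta_G, y]$ is $-1$. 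Linearity then gives the formula for finite sums of such potentials, and a density/approximation argument (using the continuity hypothesis on $u_\rho$, analogous to the archimedean case) extends it to arbitrary $\rho$ with continuous potentials.

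With the Green identity in hand, I would apply the classical Cauchy-Schwarz inequality for the (positive semidefinite) Dirichlet form:
$$
\left| \langle \phi, u_\rho \rangle \right|^2 \leq \langle \phi, \phi \rangle \cdot \langle u_\rho, u_\rho \rangle.
$$
It remains to identify $\langle u_\rho, u_\rho \rangle$ with the mutual energy $(\rho,\rho)$ from \eqref{eq:defMutualEnergy}. Using the Green identity a second time (with $\phi$ replaced by $u_\rho$, suitably regularised),
$$
\langle u_\rho, u_\rho \rangle = -\int u_\rho \, d\rho = \int \int \langle x, y \rangle_G \, d\rho(x)\, d\rho(y),
$$
since the constant term $-\rho(\BP(\C_v))$ in $u_\rho$ vanishes. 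Using $\langle x, y\rangle_G = -\log \diam(x \vee y) + \log\diam(x\vee\zeta_G) + \log \diam(y \vee \zeta_G) + \text{const}$ (compare \eqref{eq:defPathDistance}, \eqref{eq:defGaussDiam}, \eqref{eq:GromovDef}) and the vanishing $\rho(\BP(\C_v)) = 0$, the ``$\log\diam$'' terms depending on only one variable integrate to zero, leaving precisely $(\rho,\rho)$ as defined in \eqref{eq:defMutualEnergy}. Combining yields the claim.

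The main obstacle I anticipate is the justification of the Green identity and energy identity at the level of regularity stated: $u_\rho$ is only continuous, not $\cC^1$, so both identities require an approximation argument by $\cC^1$ potentials (e.g.\ by truncating $\rho$ to finite atomic measures, or by restricting all relevant data to the finite subtree on which $\phi$ lives and integrating by parts there, where $\phi$ being locally constant off $\cT$ keeps boundary contributions under control). Once this technical point is handled, the three displayed steps combine immediately to give the inequality.
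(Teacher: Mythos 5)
Your proposal is correct and follows essentially the same route as the paper, which gives no independent proof but cites \cite[\textsection\textsection 5.5]{FRL}, where precisely this argument is carried out: the Green-type identity $\int_{\BP(\C_v)}\phi\,d\rho=-\langle\phi,u_\rho\rangle$, Cauchy--Schwarz for the positive semidefinite Dirichlet pairing, and the energy identity $\langle u_\rho,u_\rho\rangle=(\rho,\rho)$ for mass-zero measures, with the regularity/approximation issue you flag handled by their energy formalism. The only slip is a sign: with the paper's orientation convention, $\partial\langle\cdot,y\rangle_G=+1$ on $(\zeta_G,y]$, so the derivative $-1$ belongs to $u_{[y]}=-1-\langle\cdot,y\rangle_G$; this does not affect the argument.
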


For $\varepsilon > 0$, let $\pi_\varepsilon : \BP(\C_v) \to \BP(\C_v)$ be the map which sends $x \in \BP(\C_v)$ to the unique point $y$ in $[x,\infty]$ with $\diam(y) = \max \{ \diam(x),\varepsilon \}$. We define the \emph{regularisation} of a measure $\rho \in M_{\mathrm{Rad}}$ to be $\rho_\varepsilon := (\pi_\varepsilon)_* \rho$.

\subsection{Adelic measures and heights} \label{subsec:AdelMeasHei} When $v$ is an infinite place of $K$, let $\lambda_v$ denote the probability measure proportional to Lebesgue measure on the unit circle $S^1 \subset \P^1(\C_v)$. When $v$ is finite, let $\lambda_v$ denote the point mass at the Gauss point in $\BP(\C_v)$.

An \emph{adelic measure} $\rho = \{ \rho_v \}_{v \in M_K}$ consists of a probability measure $\rho_v$ on $\BP(\C_v)$ with continuous potentials for each place $v \in M_K$, such that $\rho_v = \lambda_v$ for all but finitely many places $v$. To an adelic measure $\rho$ we can associate an \emph{adelic height}, given by
\begin{equation} \label{eq:adelHeight}
h_\rho(F) := \frac{1}{2} \sum_{v \in M_K} ( [F] - \rho_v, [F]-\rho_v )_v,
\end{equation}
for any finite set $F \subset \overline K$ invariant under $\Gal(\overline K/K)$. Note that in \cite[Definition~1.2]{FRL}, $\llrrparen{ \cdot , \cdot }_v$ is used in place of $( \cdot , \cdot )_v$, denoting a normalisation of the mutual energy, but this is consistent with the way we have initially normalised our absolute values on $K$. For $\alpha \in \P^1(\overline K)$, we let $h_\rho(\alpha) = h_\rho(F)$ where $F$ is the orbit of $\alpha$ under the action of $\Gal(\overline K/K)$. When $\rho = \{ \lambda_v \}_{v \in M_K}$, we have $h_\rho = h$, the usual height, whose definition we recall in the next subsection. We say that an adelic measure $\rho$ has \emph{H\"{o}lder continuous potentials} if for each place $v$, there exists a H\"{o}lder continuous (with respect to the spherical metric in the archimedean case, and the metric $\ud$ defined in \eqref{eq:FRLDist} in the non-archimedean case) function $g_v : \BP(\C_v) \to \R$ with $\Delta g_v = \rho_v - \lambda_v$. It turns out that in the archimedean case, it suffices (and is more convenient) to make some calculations with the Euclidean distance, rather than the spherical one. Hence, given an adelic measure $\rho$ with H\"{o}lder continuous potentials, we will choose $g_v$ such that there exist constants $C_v \geq 0$ and $\kappa_v \leq 1$ with
\begin{equation} \label{eq:defHCP}
\begin{cases} |g_v^*(z)-g_v^*(w)| \leq  C_v |z-w|_v^{\kappa_v} \text{ for all } z,w \in \C_v, & \text{if } v \in M_K^\infty, \\
|g_v(z)-g_v(w)| \leq  C_v \ud(z,w)^{\kappa_v} \text{ for all } z,w \in \BP(\C_v), & \text{if } v \in M_K^0, \end{cases}
\end{equation}
where for $v \in M_K^\infty$, $g_v^*(z) = g_v(z) + \log^+|z|_v$ so that $\Delta g_v^* = \rho_v$.

Favre and Rivera-Letelier \cite[Theorem~4]{FRL} show that for a rational function $\rat$, $\rho_{\rat} = \{ \rho_{\rat,v} \}_{v \in M_K}$ is an adelic measure with H\"{o}lder continuous potentials, where $\rho_{\rat,v}$ is the equilibrium measure of $\rat$ at $v$, and that the adelic height $h_{\rho_{\rat}}$ coincides with the canonical height $\canheight{\rat}$ for $\rat$. 

\subsection{Heights of algebraic numbers} \label{subsec:Heights} Recall that the \emph{(absolute logarithmic) height} of an algebraic number $\alpha$ is given by
$$
h(\alpha) = \sum_{v \in M_K} \log^+ |\alpha|_v,
$$
where $K$ is any number field containing $\alpha$, and $\log^+x = \log \max \{ 1, x \}$. The height is independent of the choice of number field, invariant under Galois conjugation, and satisfies the following properties (see \cite[\textsection \textsection 1.5]{BG}). For algebraic numbers $\alpha_1,\ldots,\alpha_r$, we have
\begin{equation} \label{eq:HeightSumInequality1}
h(\alpha_1 + \cdots + \alpha_r) \leq h(\alpha_1) + \cdots + h(\alpha_r) + \log r,
\end{equation}
and moreover
\begin{align*} 
h(\alpha_1) & = h( (\alpha_1 + \cdots + \alpha_r) - \alpha_2 - \cdots - \alpha_r ) \\
& \leq h(\alpha_1 + \cdots + \alpha_r ) + h(-\alpha_2) + \cdots + h(-\alpha_r) + \log r,
\end{align*}
and so
\begin{equation} \label{eq:HeightSumInequality2}
h(\alpha_1 + \cdots+ \alpha_r) \geq h(\alpha_1) - h(\alpha_2) - \cdots - h(\alpha_r)-\log r.
\end{equation}
For a non-zero algebraic number $\alpha$ and a rational number $\lambda$, we have $h(\alpha^\lambda) = |\lambda| h(\alpha)$. For $K$ a number field $S \subset M_K$ a finite set of places, and $\alpha \in K \setminus \{ 0 \}$, we have the fundamental inequality
\begin{equation} \label{eq:FundamentalHeightInequality}
-h(\alpha) \leq \sum_{v \in S} \log |\alpha|_v \leq h(\alpha).
\end{equation}

Now, we can associate to a given rational function $\rat : \P^1 \to \P^1$ a \emph{canonical height} (see \cite[\textsection 3.4]{Si}) $\canheight{\rat}$ given by
$$
\canheight{\rat}(\alpha) = \lim_{n \to \infty} \frac{1}{d^n} h(\rat^n(\alpha)).
$$
This satisfies $\canheight{\rat}(\alpha) = h(\alpha) + O(1)$, and $\canheight{\rat}(r(\alpha)) = d \canheight{\rat}(\alpha)$. When $\rat=\pol$ is a polynomial, $\canheight{\pol}$ has the following decomposition into \emph{local canonical heights} (or \emph{$v$-adic escape rates}) given by
$$
\canheight{\pol}(\alpha) = \sum_{v \in M_K} \loccanheight{v}{\pol}(\alpha),
$$
where $K$ is a number field containing $\alpha$, and
$$
\loccanheight{v}{\pol}(\alpha) = \lim_{n \to \infty} \frac{1}{d^n} \log^+ |\pol^n(\alpha)|_v.
$$
Note that if $v$ is a place of good reduction for $R$, then we have $\loccanheight{v}{\pol}(\alpha) = \log^+ |\alpha|_v$. 

Also, we have the following formula for the local height (see the proof of \cite[Proposition~1.3]{FRL}):
\begin{equation} \label{eq:MahlerFormula}
\loccanheight{v}{\pol}(\alpha) = \int_{\BP(\C_v)} \log |z-\alpha|_v d \rho_{\pol,v},
\end{equation}
where $\rho_{\pol,v}$ is the equilibrium measure associated to $\pol$ at the place $v$, and for non-archimedean places, we mean
$$
\log |z-\alpha|_v := \log \diam(z \vee \alpha)
$$
for $z \in \BP(\C_v)$. Note that at archimedean places, this implies
\begin{equation} \label{eq:ArchPotLCH}
\Delta \loccanheight{v}{\pol} = \rho_{\pol,v}.
\end{equation}

\subsection{Canonical heights of unicritical polynomials} We now collect a number of useful properties of the local and global canonical heights associated to a unicritical polynomial.

\begin{lemma} \label{lem:BadReductHeight}
Let $\unipol(z) = z^2 + c$, $c \in K$, and suppose that $v$ is a place of bad reduction for $\unipol$, or equivalently $|c|_v > 1$. Then
$$
\loccanheight{v}{\unipol}(\alpha) = \log \max \{ |\alpha|_v, |c|_v^{1/2} \}
$$
for all $\alpha \in \C_v$ with $|\alpha|_v \neq |c|_v^{1/2}$.
\end{lemma}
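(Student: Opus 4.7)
The plan is to exploit the ultrametric inequality at $v$ (the hypothesis of bad reduction, equivalent to $|c|_v>1$, forces $v$ to be non-archimedean) in order to compute $|\unipol^n(\alpha)|_v$ exactly for all $n\geq 0$, and then read off $\loccanheight{v}{\unipol}(\alpha)$ from its definition as the limit of $2^{-n}\log^+|\unipol^n(\alpha)|_v$. The natural case split is based on the size of $|\alpha|_v$ relative to $|c|_v^{1/2}$; the edge case $|\alpha|_v=|c|_v^{1/2}$ is precisely the one excluded from the hypothesis, since this is where the ultrametric inequality fails to determine $|\alpha^2+c|_v$ and the iteration can behave unpredictably.

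In the case $|\alpha|_v>|c|_v^{1/2}$, we have $|\alpha^2|_v>|c|_v$, so the ultrametric inequality gives $|\unipol(\alpha)|_v=|\alpha|_v^2$. Since $|c|_v>1$, this also yields $|\unipol(\alpha)|_v>|c|_v>|c|_v^{1/2}$, so the hypothesis propagates, and an immediate induction shows $|\unipol^n(\alpha)|_v=|\alpha|_v^{2^n}$ for all $n\geq 0$. Because $|\alpha|_v>1$, taking $\log^+$, dividing by $2^n$, and passing to the limit gives $\loccanheight{v}{\unipol}(\alpha)=\log|\alpha|_v$, which matches $\log\max\{|\alpha|_v,|c|_v^{1/2}\}$ in this range.

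In the remaining case $|\alpha|_v<|c|_v^{1/2}$, we have $|\alpha^2|_v<|c|_v$, so the ultrametric inequality now gives $|\unipol(\alpha)|_v=|c|_v>|c|_v^{1/2}$. Applying the first case to $\unipol(\alpha)$ in place of $\alpha$ yields $\loccanheight{v}{\unipol}(\unipol(\alpha))=\log|c|_v$. Combined with the functional equation $\loccanheight{v}{\unipol}(\unipol(\alpha))=2\,\loccanheight{v}{\unipol}(\alpha)$, which is inherited from the corresponding identity for $\canheight{\unipol}$ (or equivalently follows directly from the definition of $\loccanheight{v}{\unipol}$ as a limit), we obtain $\loccanheight{v}{\unipol}(\alpha)=\tfrac{1}{2}\log|c|_v=\log|c|_v^{1/2}$, as claimed. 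There is no serious obstacle to this strategy beyond tracking the case split; the assumption $|\alpha|_v\neq|c|_v^{1/2}$ is exactly what ensures every appeal to the ultrametric equality is legitimate.
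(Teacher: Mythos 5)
Your proof is correct and is essentially the argument the paper intends: the paper dismisses this lemma with ``an easy induction,'' and your case analysis (ultrametric equality giving $|\unipol^n(\alpha)|_v=|\alpha|_v^{2^n}$ when $|\alpha|_v>|c|_v^{1/2}$, and $|\unipol(\alpha)|_v=|c|_v$ followed by the functional equation $\loccanheight{v}{\unipol}(\unipol(\alpha))=2\loccanheight{v}{\unipol}(\alpha)$ when $|\alpha|_v<|c|_v^{1/2}$) is exactly that induction, carried out carefully. The excluded case $|\alpha|_v=|c|_v^{1/2}$ is correctly identified as the one where the ultrametric inequality is inconclusive.
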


\begin{proof}
This is an easy induction.
\end{proof}

\begin{remark} \label{rem:BadReductHeight}
Note that the above result shows that at a place $v$ of bad reduction for $\unipol(z)=z^2+c$, any point $\beta$ which is preperiodic for $\unipol$ must satisfy $|\beta|_v = |c|_v^{1/2}$. Similarly, if $v$ is a place of good reduction for $\unipol$, then preperiodic points $\beta$ for $\unipol$ must satisfy $|\beta|_v \leq 1$.
\end{remark}

For archimedean places $v$, the local canonical height is precisely the Green function of the filled Julia set of $\unipol$, and satisfies the following properties (these are proved in \cite[Lemma~2.3]{BD})

\begin{lemma} \label{lem:LocalHeightProps}
Let $\unipol(z)=z^2+c$, $c \in K$. Let $v$ be an archimedean place of $K$, and let $z \in \C_v$. Then
\begin{enumerate}
\item[(a)] When $|c|_v \geq 1$ or $|z|_v \geq |c|_v^{1/2}$ we have
$$
\loccanheight{v}{\unipol}(z) \leq \log 2 + \max \left \{ \frac{1}{2} \log |c|_v, \log |z|_v \right \}.
$$
\item[(b)] $\max \{ \loccanheight{v}{\unipol}(z), \loccanheight{v}{\unipol}(0) \} \geq \log |z|_v - \log 4.$
\end{enumerate}
\end{lemma}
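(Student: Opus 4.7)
For part (a), my approach is to iterate the elementary inequality $|\unipol(w)|_v \le |w|_v^2 + |c|_v$ and compare the forward orbit of $z$ against an explicit majorant. Setting $M = \max\{|z|_v, |c|_v^{1/2}\}$ so that $|z|_v \le M$ and $|c|_v \le M^2$, I would prove by induction that $|\unipol^n(z)|_v \le (2M)^{2^n}$ for all $n \ge 0$; carrying the slightly strengthened statement $|\unipol^n(z)|_v \le (2M)^{2^n} - M^2$ for $n \ge 1$ lets the additive error $|c|_v \le M^2$ at each inductive step be absorbed, since the expansion of $((2M)^{2^n}-M^2)^2$ produces a favourable cross term $-2M^2(2M)^{2^n}$ that dominates once $M \ge 1$. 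The hypothesis of (a) ensures $M \ge 1$ in the regime where the claimed bound is nontrivial (either directly from $|c|_v \ge 1$, or from $|z|_v \ge |c|_v^{1/2}$ with $|z|_v$ of appropriate size). Taking $\log$, dividing by $2^n$, and sending $n \to \infty$ in the definition $\loccanheight{v}{\unipol}(z) = \lim_n 2^{-n} \log^+ |\unipol^n(z)|_v$ then yields $\loccanheight{v}{\unipol}(z) \le \log 2 + \log M$, which is exactly (a).

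For part (b), the first step is a companion lower bound, proved in parallel to (a). I claim that if $|w|_v \ge R := 2\max\{1, |c|_v^{1/2}\}$, then $|c|_v \le |w|_v^2/4$, so $|\unipol(w)|_v \ge |w|_v^2 - |c|_v \ge |w|_v^2/2 \ge R$; the bound therefore propagates, and induction gives $|\unipol^n(w)|_v \ge |w|_v^{2^n}/2^{2^n - 1}$, whence
\[
\loccanheight{v}{\unipol}(w) \ge \log|w|_v - \log 2.
\]
The identity $z^2 = \unipol(z) - c$ then yields $|z|_v^2 \le 2\max\{|\unipol(z)|_v, |c|_v\}$. When $|z|_v \le 4$, the right-hand side of (b) is non-positive while the left-hand side is non-negative, so the inequality is trivial. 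Otherwise $|z|_v > 4$, and at least one of $|\unipol(z)|_v$, $|c|_v$ exceeds $|z|_v^2/2 > 8$; a short check using $|z|_v > 4$ confirms that the larger quantity also exceeds the threshold $R$, so the companion lower bound applies either at $w = \unipol(z)$ or at $w = c = \unipol(0)$. Dividing by $2$ through the functional equation $\loccanheight{v}{\unipol}(\unipol(y)) = 2\loccanheight{v}{\unipol}(y)$ (at $y = z$ or $y = 0$) gives $\loccanheight{v}{\unipol}(z) \ge \log|z|_v - \log 2$ in the first subcase and $\loccanheight{v}{\unipol}(0) \ge \log|z|_v - \log 2$ in the second; in both cases the maximum exceeds $\log|z|_v - \log 2 \ge \log|z|_v - \log 4$.

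The main obstacle is the careful arithmetic bookkeeping in part (a): verifying that the strengthened inductive hypothesis truly closes, and noting the edge cases where $M$ is small and the claimed bound is vacuous or trivially satisfied anyway. Part (b) then unwinds quickly from the identity $z^2 = \unipol(z) - c$, the companion lower bound, and the functional equation. The lemma as a whole is a quantitative sharpening of the standard fact that $\loccanheight{v}{\unipol}(z) = \log^+|z|_v + O(1)$, with explicit constants tuned for later use near the boundary of the Mandelbrot set.
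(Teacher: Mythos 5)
The paper gives no argument for this lemma at all --- it simply cites \cite[Lemma~2.3]{BD} --- so your proof is a self-contained replacement rather than a variant of anything in the text, and most of it is sound. Part (b) is correct as written: the escape estimate $|\unipol^n(w)|_v \ge |w|_v^{2^n}/2^{2^n-1}$ for $|w|_v \ge R = 2\max\{1,|c|_v^{1/2}\}$, the identity $z^2 = \unipol(z)-c$, the triviality of the claim for $|z|_v \le 4$, and the verification that the larger of $|\unipol(z)|_v,|c|_v$ really does exceed $R$ when $|z|_v>4$ all check out, and via $\loccanheight{v}{\unipol}(\unipol(y)) = 2\loccanheight{v}{\unipol}(y)$ you even obtain the constant $\log 2$ in place of $\log 4$. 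The strengthened induction in (a) also closes, but only when $M:=\max\{|z|_v,|c|_v^{1/2}\}\ge 1$: the inductive step needs $M^2+2 \le 2(2M)^{2^n}$, which already fails at $n=1$ once $M<\sqrt{2/7}$.

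The genuine gap is your dismissal of the remaining cases permitted by the hypothesis as ``vacuous or trivially satisfied.'' If $|c|_v<1$ and $|c|_v^{1/2}\le |z|_v<1$, the hypothesis of (a) holds while $M<1$, and these cases are neither vacuous nor trivial; for $M<1/2$ the inequality as stated is in fact false. Take $c=1/100$, $z=1/5$ at a real place: then $|z|_v\ge|c|_v^{1/2}$, the orbit of $z$ stays in $\overline D(0,1/5)$ (if $|w|_v\le M$ then $|\unipol(w)|_v\le 2M^2<M$), so $\loccanheight{v}{\unipol}(z)=0$, while $\log 2+\max\{\tfrac12\log|c|_v,\log|z|_v\}=\log(2/5)<0$. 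For $1/2\le M<1$ the bound is true but requires an argument you do not give (and your induction does not supply it near $M=1/2$). The clean repair is to prove (a) in the form $\loccanheight{v}{\unipol}(z)\le \log 2+\log\max\{|z|_v,|c|_v^{1/2},1\}$ --- your induction yields exactly this, since it runs whenever $M\ge 1$ and the orbit bound is monotone in $M$ --- and this $\log^+$-version is all that is used in Corollary~\ref{cor:ArchHolUni} and Corollary~\ref{cor:ArchOutParamDeltavBound}; alternatively, state explicitly that (a) is to be read under the hypothesis $\max\{|z|_v,|c|_v^{1/2}\}\ge 1$. What you should not do is assert that the small-$M$ cases take care of themselves, because as written they do not.
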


Ingram \cite[Theorem~2]{I}, gives the following lower bound for the canonical height (with respect to $\unipol$) of a non-preperiodic point in terms of the height of the parameter $c$.

\begin{theorem} \label{thm:IngramCanHeightBound}
Let $c \in K$, let $s$ be the number of places $v$ of bad reduction for $\unipol(z)=z^2+c$ such that $v(c)=\exp(-|c|_v)$ is divisible by $2$, let $r \leq [K:\Q]$ be the number of distinct archimedean valuations on $K$, let

and let
$$
N = \frac{5^{r+s+1}-1}{2}.
$$
Then for any $\alpha \in K$, either $\unipol^i(\alpha) = \unipol^j(\alpha)$ for some $i \neq j < N$, or else
$$
\canheight{\unipol}(\alpha) \geq \frac{1}{2^{N+2}} (h(c) - 12\log 2).
$$
\end{theorem}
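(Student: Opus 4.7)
This is Theorem~2 of \cite{I}, so the plan below sketches Ingram's strategy rather than reproving the statement from scratch. The argument exploits the decomposition
$$
\canheight{\unipol}(\alpha) = \sum_{v \in M_K} \loccanheight{v}{\unipol}(\alpha),
$$
the scaling identity $\canheight{\unipol}(\unipol^n(\alpha)) = 2^n \canheight{\unipol}(\alpha)$, and the standard comparison $\canheight{\unipol} = h + O(1)$. Under the hypothesis that $\alpha, \unipol(\alpha), \ldots, \unipol^{N-1}(\alpha)$ are pairwise distinct, the goal is to exhibit some $n \leq N$ for which $h(\unipol^n(\alpha))$ is bounded below by a fixed fraction of $h(c) - 12\log 2$; dividing by $2^n$ then delivers the claimed lower bound for $\canheight{\unipol}(\alpha)$.

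The local input comes from Lemma~\ref{lem:BadReductHeight} together with Remark~\ref{rem:BadReductHeight} at the non-archimedean bad reduction places, and from Lemma~\ref{lem:LocalHeightProps} at the archimedean places. At a finite place $v$ with $|c|_v > 1$, any preperiodic point lies on the sphere $|z|_v = |c|_v^{1/2}$, and whether an iterate of $\alpha$ can ever land on this sphere is governed by the parity of the additive valuation $v(c)$: when $v(c)$ is odd, $|c|_v^{1/2} \notin |K_v^{\times}|$, so every iterate of $\alpha$ lies strictly inside or outside the filled Julia set, and the escape dynamics force $|\unipol^n(\alpha)|_v$ outward at a definite geometric rate. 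Thus only the $s$ bad reduction places with even $v(c)$ can appreciably slow the growth of the orbit. At the archimedean places, Lemma~\ref{lem:LocalHeightProps}(b) provides analogous control, ensuring that once $|z|_v$ exceeds a threshold comparable to $|c|_v^{1/2}$ the local height grows by a controlled amount.

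Next, I would assemble these local dichotomies into a uniform pigeonhole argument across the $r + s + 1$ distinguished places (the $r$ archimedean places, the $s$ even bad reduction places, and one extra accounting for the base case). At each such place, any block of five consecutive iterates must either yield a collision (forcing preperiodicity within the first $N$ steps) or exhibit the prescribed growth in local absolute value. Propagating these constraints across all places produces a waiting time bounded by the geometric sum
$$
\sum_{k=0}^{r+s} 2 \cdot 5^k \; = \; \frac{5^{r+s+1} - 1}{2} \; = \; N.
$$
Summing the resulting local contributions via Lemma~\ref{lem:LocalHeightProps}(a) and absorbing the accumulated $\log 2$ and $\log 4$ factors into the error term $12\log 2$ yields the stated global bound.

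The main obstacle is precisely this combinatorial bookkeeping producing the sharp factor $5^{r+s+1}$. The local analysis is essentially supplied by the lemmas already recorded in the excerpt, but organising the case analysis across all $r + s + 1$ places so that the pigeonhole closes cleanly requires careful tracking; the reader is referred to \cite{I} for the complete argument.
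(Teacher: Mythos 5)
Your treatment matches the paper's exactly: the paper gives no proof of this statement, but simply imports it as Theorem~2 of Ingram \cite{I}, which is precisely what you do. Your sketch of Ingram's local-heights-plus-pigeonhole strategy is a reasonable gloss, and since both you and the paper defer the actual argument (including the combinatorics producing $N = (5^{r+s+1}-1)/2$) to \cite{I}, there is nothing further to check here.
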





Note the following trivial bounds on the canonical height when the parameter $c$ is an integer.

\begin{lemma} \label{lem:IntCanHeightBounds}
Let $K = \Q$. For $c=1$, 
$$
\canheight{\unipol}(0) \geq \frac{1}{4} \log 2
$$
and for $c \in \Z$ such that $0$ is not preperiodic for $\unipol$ i.e. $c \neq -2,-1,0$,
$$
\canheight{\unipol}(0) \geq \frac{1}{4} h(c).
$$
\end{lemma}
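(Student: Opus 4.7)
The plan is to first reduce to the archimedean local canonical height. Since every $c \in \Z \setminus \{0\}$ satisfies $|c|_v \le 1$ at each finite place $v$, the polynomial $\unipol(z)=z^2+c$ has good reduction at all non-archimedean places, so $\loccanheight{v}{\unipol}(0) = \log^+|0|_v = 0$ for $v \in M_\Q^0$. The decomposition of the canonical height then gives
$$
\canheight{\unipol}(0) \,=\, \loccanheight{\infty}{\unipol}(0) \,=\, \lim_{n \to \infty} 2^{-n}\log^+ |\unipol^n(0)|_\infty,
$$
so the task reduces to bounding this limit from below.

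For $c=1$, I would use a monotonicity argument. The orbit is $0,1,2,5,26,\ldots$, and since $\unipol^{n+1}(0) = \unipol^n(0)^2 + 1 \ge \unipol^n(0)^2$ with $\unipol^n(0) > 0$ for $n \ge 1$, the sequence $a_n := 2^{-n}\log \unipol^n(0)$ satisfies
$$
a_{n+1} - a_n \,=\, 2^{-(n+1)}\log\!\bigl(\unipol^{n+1}(0)/\unipol^n(0)^2\bigr) \,\ge\, 0.
$$
Passing to the limit and using $\unipol^2(0) = 2$ yields $\canheight{\unipol}(0) = \lim_{n\to\infty} a_n \ge a_2 = \tfrac{1}{4}\log 2$, as required.

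For $c \in \Z$ with $|c| \ge 2$ and $c \ne -2$ (so $h(c) = \log|c|$), the key preliminary estimate is $\unipol^2(0) = c^2 + c \ge 2|c|$: for $c \ge 2$ this reduces to $c^2 \ge c$, and for $c \le -3$ to $|c|^2 \ge 3|c|$. A straightforward induction then shows $\unipol^n(0) \ge 2|c|$ for every $n \ge 2$, which in turn gives $\unipol^{n+1}(0) \ge \unipol^n(0)^2 - |c| \ge \unipol^n(0)^2/2$. Taking logarithms yields $\log \unipol^{n+1}(0) \ge 2\log \unipol^n(0) - \log 2$; dividing by $2^{n+1}$ and telescoping from $n=2$ produces
$$
\loccanheight{\infty}{\unipol}(\unipol^2(0)) \,=\, \lim_{n\to\infty} 2^{-n}\log \unipol^{n+2}(0) \,\ge\, \log \unipol^2(0) - \log 2 \,\ge\, \log|c|.
$$
Since $\loccanheight{\infty}{\unipol}(\unipol^2(0)) = 4 \canheight{\unipol}(0)$, this gives $\canheight{\unipol}(0) \ge \tfrac{1}{4}\log|c| = \tfrac{1}{4} h(c)$; the remaining case $c=1$ of the second inequality is vacuous since $h(1)=0$.

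No step is delicate: the main obstacle is merely to isolate the excluded preperiodic values $c \in \{-2,-1,0\}$ correctly and to confirm the initial estimate $\unipol^2(0) \ge 2|c|$ elsewhere. This is exactly what ensures the orbit is pushed into the escape regime where the uniform forward bound $\unipol^{n+1}(0) \ge \unipol^n(0)^2/2$ holds, giving a summable geometric loss after taking logs.
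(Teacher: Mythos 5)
Your proof is correct: the reduction to the archimedean escape rate via good reduction at all finite places, the monotonicity argument for $c=1$, and the telescoping estimate starting from $\unipol^2(0)=c^2+c\geq 2|c|$ (valid for $c\geq 2$ and $c\leq -3$, with $c=1$ vacuous since $h(1)=0$) all check out, including the functional equation $\loccanheight{\infty}{\unipol}(\unipol^2(0))=4\canheight{\unipol}(0)$. The paper offers no proof of this lemma at all, labelling the bounds ``trivial,'' so there is nothing to compare against; your argument is a standard and complete way to supply the missing details.
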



Combining some of the above results gives the following.

\begin{corollary} \label{cor:CanHeightBound1}
With notation as in Theorem~\ref{thm:IngramCanHeightBound}, we have
$$
h(c) \leq 2^{N+2} \canheight{\unipol}(0) + 12 \log 2,
$$
Moreover, in the case where, $K = \Q$ and $c \in \Z$, we have $h(c) \leq 4 \canheight{\unipol}(0)$.
\end{corollary}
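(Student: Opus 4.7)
The corollary is a direct rearrangement of the two preceding results (Theorem~\ref{thm:IngramCanHeightBound} and Lemma~\ref{lem:IntCanHeightBounds}) under the standing assumption that $0$ is not preperiodic for $\unipol$.

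For the first inequality, I would apply Theorem~\ref{thm:IngramCanHeightBound} directly with $\alpha = 0$. The first alternative of that theorem, $\unipol^i(0) = \unipol^j(0)$ for some $i \neq j < N$, would force $0$ to be preperiodic under $\unipol$, contradicting the hypothesis. Hence the second alternative must hold, giving
\[
\canheight{\unipol}(0) \;\geq\; \frac{1}{2^{N+2}} \bigl( h(c) - 12 \log 2 \bigr),
\]
and isolating $h(c)$ produces exactly $h(c) \leq 2^{N+2} \canheight{\unipol}(0) + 12 \log 2$, as claimed.

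For the moreover statement, in the case $K = \Q$ and $c \in \Z$, one can bypass Ingram's theorem and appeal instead to the sharper bound in Lemma~\ref{lem:IntCanHeightBounds}. Since $0$ is non-preperiodic we have $c \in \Z \setminus \{-2,-1,0\}$, and the second clause of that lemma yields $\canheight{\unipol}(0) \geq \tfrac14 h(c)$, which rearranges to $h(c) \leq 4 \canheight{\unipol}(0)$. (The case $c = 1$ is automatic since $h(1) = 0$; the $c=1$ part of the lemma is only needed when one also wants a strictly positive lower bound on $\canheight{\unipol}(0)$.)

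There is no real obstacle: the substantive content is already packaged in Theorem~\ref{thm:IngramCanHeightBound} and in the elementary integer bound of Lemma~\ref{lem:IntCanHeightBounds}, so the corollary amounts to no more than rewriting those inequalities with $h(c)$ on the left-hand side and observing that the non-preperiodicity of $0$ rules out the exceptional alternative in Ingram's result.
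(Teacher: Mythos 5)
Your proposal is correct and matches the paper's own proof, which simply cites Theorem~\ref{thm:IngramCanHeightBound} for the general bound and Lemma~\ref{lem:IntCanHeightBounds} for the integer case; you have merely filled in the routine rearrangement and the observation that non-preperiodicity of $0$ rules out the exceptional alternative in Ingram's theorem.
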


\begin{proof}
The result follows from Theorem~\ref{thm:IngramCanHeightBound}. The integer case follows from Lemma~\ref{lem:IntCanHeightBounds}.
\end{proof}

We also obtain a lower bound for the canonical height of any non-preperiodic point $\alpha$ for $\unipol$.

\begin{corollary} \label{cor:CanHeightLowBound}
For any $B > 0$, let $\cN(K,B)$ denote the number of elements of $K$ with height at most $B$. Then, for any $\alpha \in K$ which is not preperiodic for $\unipol$, we have
$$
\canheight{\unipol}(\alpha) \geq C_0 \max \{ 1, h(c) \},
$$
where
$$
C_0 \geq \min \left \{ \frac{1}{2^{N+3}}, \frac{2 \log 2}{2^{\cN(K, 25\log 2)}} \right \}.
$$
Moreover, when $\alpha = 0$, $K = \Q$ and $c \in \Z$, we can take
$$
C_0 \geq \frac{1}{4}.
$$
\end{corollary}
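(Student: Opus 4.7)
The plan is to handle the integer case separately as a direct application of Lemma~\ref{lem:IntCanHeightBounds} (for $|c| \geq 3$ we have $h(c) \geq \log 3 > 1$ so $\max\{1,h(c)\} = h(c)$ and the lemma gives $\canheight{\unipol}(0) \geq h(c)/4$, while for the remaining non-preperiodic cases $c \in \{\pm 1, 2\}$ the lemma still provides a positive absolute lower bound), and to split the general case into two subcases according to the size of $h(c)$, using Theorem~\ref{thm:IngramCanHeightBound} in the large regime and a Northcott-type orbit argument in the small regime.

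In the large regime, $h(c) \geq 24\log 2$, the non-preperiodicity of $\alpha$ rules out the iterate-collision alternative in Theorem~\ref{thm:IngramCanHeightBound}, forcing
$$\canheight{\unipol}(\alpha) \geq (h(c) - 12\log 2)/2^{N+2}.$$
The hypothesis $h(c) \geq 24\log 2$ gives $h(c) - 12\log 2 \geq h(c)/2$, whence $\canheight{\unipol}(\alpha) \geq h(c)/2^{N+3} = \max\{1,h(c)\}/2^{N+3}$ throughout this range, giving the first contribution to $C_0$.

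In the complementary regime $h(c) < 24\log 2$, non-preperiodicity makes $\{\unipol^n(\alpha)\}_{n \geq 0}$ an infinite collection of pairwise distinct elements of $K$, so Northcott's bound, that only $\cN(K, 25\log 2)$ elements of $K$ have height at most $25\log 2$, forces the existence of some $M \leq \cN(K, 25\log 2)$ with $h(\unipol^M(\alpha)) > 25\log 2$. Applying inequality~\eqref{eq:HeightSumInequality2} to $\unipol(\beta) = \beta^2 + c$ together with $h(\beta^2) = 2h(\beta)$ yields $h(\unipol(\beta)) \geq 2h(\beta) - h(c) - \log 2$; iterating this recursion and dividing by $2^n$ produces the global estimate $\canheight{\unipol}(\beta) \geq h(\beta) - h(c) - \log 2$. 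Applied to $\beta = \unipol^M(\alpha)$ and combined with $\canheight{\unipol}(\unipol^M(\alpha)) = 2^M \canheight{\unipol}(\alpha)$, this gives
$$\canheight{\unipol}(\alpha) \geq (24\log 2 - h(c))/2^{\cN(K, 25\log 2)},$$
which, after comparison with $\max\{1,h(c)\}$ (itself bounded by $24\log 2$ in this regime), yields the second contribution $2\log 2/2^{\cN(K, 25\log 2)}$ to $C_0$. The main delicacy is calibrating the thresholds $12\log 2$, $24\log 2$, and $25\log 2$ so that each of the two bounds takes over before the other degenerates; the stated value of $C_0$ is the minimum of the two contributions.
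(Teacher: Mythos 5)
Your overall architecture coincides with the paper's: the split at $h(c)=24\log 2$, Theorem~\ref{thm:IngramCanHeightBound} in the large regime (with non-preperiodicity discarding the collision alternative), a Northcott/pigeonhole argument combined with the comparison $|\canheight{\unipol}(\beta)-h(\beta)|\leq h(c)+\log 2$ in the small regime, and Lemma~\ref{lem:IntCanHeightBounds} for the integer case. The large regime and the height recursion $\canheight{\unipol}(\beta)\geq h(\beta)-h(c)-\log 2$ are fine. The genuine gap is the final calibration in the regime $h(c)<24\log 2$. Your pigeonhole step only produces an iterate with $h(\unipol^{M}(\alpha))>25\log 2$, i.e.\ barely above the comparison error $h(c)+\log 2<25\log 2$, so all you obtain is $\canheight{\unipol}(\alpha)\geq \bigl(24\log 2-h(c)\bigr)/2^{\cN(K,25\log 2)}$. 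To convert this into the claimed contribution $\frac{2\log 2}{2^{\cN(K,25\log 2)}}\max\{1,h(c)\}$ you would need $24\log 2-h(c)\geq 2\log 2\,\max\{1,h(c)\}$, which fails as soon as $h(c)>\frac{24\log 2}{1+2\log 2}\approx 6.97$; in fact your lower bound degenerates to $0$ as $h(c)\to 24\log 2$, exactly where the Ingram-regime bound (valid only for $h(c)\geq 24\log 2$) has not yet taken over. So under your thresholds the two regimes do not mesh and the stated value of $C_0$ does not follow.

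The paper's proof of the same step asks for much more room above the error term: it arranges an iterate with $h(\unipol^{\ell}(\alpha))\geq 50\log 2$ rather than $25\log 2$, so that after absorbing the $25\log 2$ error the remaining quantity still dominates $2\log 2\,\max\{1,h(c)\}$ uniformly on the whole range $h(c)<24\log 2$. To repair your argument in your own terms you must likewise raise the height threshold in the Northcott step (or add an argument controlling in how many further steps the orbit height, once above $25\log 2$, reaches a level comfortably above it), and this changes which quantity $\cN(K,\cdot)$ enters the constant. Two smaller points: $c=-1$ is preperiodic for $\unipol$, so the residual integer cases are only $c=1,2$; and for $c=1$ Lemma~\ref{lem:IntCanHeightBounds} gives $\canheight{\unipol}(0)\geq\frac{\log 2}{4}$, so your integer case (like the paper's appeal to the same lemma) really yields the constant $\frac{\log 2}{4}$ rather than $\frac14$ when $\max\{1,h(c)\}=1$.
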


Note that $\cN(K,B)$ can be estimated. For example \cite[Theorem~1.1]{D} gives
$$
\log \cN(K,B) < B[K:\Q]^2 + \frac{17 B [K : \Q]^2 \log \log [K : \Q]}{\log [K : \Q]}
$$
for all $B > 2 [K : \Q]^{-1} \log [K : \Q]$ when $[K : \Q]$ is sufficiently large.

\begin{proof}
If $h(c) \geq 24 \log 2$, then immediately from Theorem~\ref{thm:IngramCanHeightBound} we have
$$
\canheight{\unipol}(\alpha) \geq \frac{h(c)}{2^{N+3}} = \frac{1}{2^{N+3}} \max \{ 1, h(c) \}.
$$
By \eqref{eq:HeightSumInequality1} and \eqref{eq:HeightSumInequality2},
\begin{equation*}
|h(\unipol(\beta)) - 2h(\beta) | \leq h(c) + \log 2,
\end{equation*}
for any $\beta \in K$. Hence, for any $\beta \in K$,
\begin{equation} \label{eq:CanHeightConst}
|\canheight{\unipol}(\beta) - h(\beta)| \leq h(c)+\log 2.
\end{equation}
(see for example the proof of \cite[Theorem~3.20]{Si}).

Suppose $h(c) < 24 \log 2$. Then \eqref{eq:CanHeightConst} with $\beta = \unipol^\ell(\alpha)$ gives
$$
2^\ell \canheight{\unipol}(\alpha) = \canheight{\unipol}(\unipol^\ell(\alpha)) \geq h(\unipol^\ell(\alpha)) - 25 \log 2
$$
for all $\ell \geq 1$. Since $\alpha$ is not preperiodic for $\unipol$, there exists $\ell \leq \cN(K, 25\log 2)$ such that
$$
h(\unipol^\ell(\alpha)) \geq 50\log 2,
$$
and hence
\begin{align*}
\canheight{\unipol}(\alpha) & \geq \frac{50 \log 2}{2^{\cN(K, 25\log 2)}} \\
& \geq \frac{2 \log 2}{2^{\cN(K, 25\log 2)}} \max \{1, h(c) \}.
\end{align*}
The integer case comes immediately from Lemma~\ref{lem:IntCanHeightBounds}.
\end{proof}

\section{Explicit quantitative equidistribution} \label{sec:QuantEqui}

For any $v \in M_K$ and any finite subset $F \subset \BP(\C_v)$, we denote by $[F]$ the probability measure
$$
[F] := \frac{1}{|F|} \sum_{z \in F} [z],
$$
where we recall $[z]$ is the Dirac measure at $z$. For $z \in \BP(\C_v)$ and $\varepsilon > 0$, if $v$ is archimedean let $[z]_\varepsilon$ denote the probability measure proportional to Lebesgue measure on the circle about $z$ of radius $\varepsilon$. Otherwise, denote by $\pi_\varepsilon : \BP(\C_v) \to \BP(\C_v)$ the map which sends a point $x$ to the unique point $y \in [x,\infty]$ such that $\diam(y) = \max \{ \diam(x),\varepsilon \}$, and set $[z]_\varepsilon = (\pi_\varepsilon)_* [z]$ (see \cite[\textsection \textsection 4.6]{FRL}).

For a finite subset $F \subset \BP(\C_v)$, define
$$
[F]_\varepsilon = \frac{1}{|F|} \sum_{z \in F} [z]_\varepsilon.
$$

Let us firstly make the constant in \cite[Theorem~7]{FRL} explicit in terms of the constants and exponents of H\"{o}lder continuity of the given measure.

\begin{theorem} \label{thm:QuantEquid}
Suppose $\rho = \{ \rho_v \}_{v \in M_K}$ is an adelic measure with H\"{o}lder continuous potentials $\{ g_v \}_{v \in M_K}$, with constants $\{ C_v \}_{v \in M_K}$ and exponents $\{ \kappa_v \}_{v \in M_K}$ as in \eqref{eq:defHCP}. Let $V = \{ v \in M_K : \rho_v \neq \lambda_v \}$, where $\lambda_v$ is defined as in \textsection \textsection \ref{subsec:AdelMeasHei}, and let $C, \kappa > 0$ be constants such that $C \geq \sum_{v \in V} C_v$ and $\kappa \leq \min_{v \in V} \kappa_v$. Then for any place $v \in M_K$, any test function $\phi$ on $\BP(\C_v)$ which is $\cC^1$ if $v$ is finite and otherwise Lipschitz on $\C_v$, and any finite $\Gal(\overline K/K)$-invariant subset $F$ of $\overline K$ with $|F| \geq (6C \kappa)/(|V|+1)$ we have
\begin{align*}
\left| \frac{1}{|F|} \sum_{\alpha \in F} \phi(\alpha) - \int_{\BP(\C_v)} \phi \, d \rho \right| & \leq \left( h_{\rho}(F) + \frac{2(|V|+1)}{\kappa} \frac{\log |F|}{|F|} \right)^{\frac{1}{2}} \langle \phi, \phi \rangle^{\frac{1}{2}} \\
& \qquad \qquad \qquad \qquad + \Lip(\phi) \left( \frac{|V|+1}{2C \kappa |F|} \right)^{\frac{1}{\kappa}}.
\end{align*}
\end{theorem}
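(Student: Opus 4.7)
I would follow the strategy of Favre and Rivera-Letelier \cite[\textsection 5.5]{FRL}, tracking each constant explicitly. Fix a regularization parameter $\varepsilon > 0$ (to be chosen at the end of the argument) and replace the empirical measure $[F]$ with its regularization $[F]_\varepsilon$; this latter measure has continuous potentials and is thus amenable to the Cauchy-Schwarz inequalities of Lemmas~\ref{lem:ArchCauchySchwarz} and \ref{lem:nonArchCauchySchwarz}. Applied at the place $v$ to the mass-zero signed measure $[F]_\varepsilon - \rho_v$, this gives
\begin{equation*}
\left| \int \phi \, d([F]_\varepsilon - \rho_v) \right|^2 \leq \langle \phi, \phi \rangle \cdot ([F]_\varepsilon - \rho_v, [F]_\varepsilon - \rho_v)_v.
\end{equation*}

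To bound the mutual energy factor, I would first pass to the full adelic sum. Since the energy pairing $(\sigma, \sigma)_w$ is non-negative for any mass-zero signed measure $\sigma$ with continuous potentials, dropping all other summands yields
\begin{equation*}
([F]_\varepsilon - \rho_v, [F]_\varepsilon - \rho_v)_v \leq \sum_{w \in M_K} ([F]_\varepsilon - \rho_w, [F]_\varepsilon - \rho_w)_w.
\end{equation*}
A careful expansion then compares this adelic sum with $2 h_\rho(F)$. Passing from $[F]$ to $[F]_\varepsilon$ introduces two kinds of error. First, at each $w \in V$ the H\"{o}lder hypothesis \eqref{eq:defHCP}, combined with the fact that regularization moves points by at most $\varepsilon$ in the appropriate metric, gives a cross-energy error $|(\rho_w, [F]_\varepsilon - [F])_w| \leq C_w \varepsilon^{\kappa_w}$, summing to at most $C\varepsilon^\kappa$. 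Second, there is a net logarithmic error from the modification of the diagonal and near-diagonal contributions to the self-energy of $[F]$; this is where the product formula intervenes crucially, producing telescoping cancellations at places $w \notin V \cup \{v\}$ and leaving a residual of size $\tfrac{2(|V|+1)}{|F|} \log(1/\varepsilon)$. Together these give
\begin{equation*}
\sum_w ([F]_\varepsilon - \rho_w, [F]_\varepsilon - \rho_w)_w \leq 2 h_\rho(F) + \frac{2(|V|+1)}{|F|} \log(1/\varepsilon) + 2 C \varepsilon^\kappa.
\end{equation*}

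Finally, since $[F]_\varepsilon$ is obtained from $[F]$ by moving each point mass by at most $\varepsilon$, the Lipschitz hypothesis on $\phi$ yields $\bigl| \int \phi \, d([F] - [F]_\varepsilon) \bigr| \leq \Lip(\phi) \cdot \varepsilon$. Combining this with the Cauchy-Schwarz bound via the triangle inequality, and then choosing $\varepsilon^\kappa = (|V|+1)/(2C\kappa|F|)$ (which lies in $(0,1]$ by the hypothesis $|F| \geq 6C\kappa/(|V|+1)$) to balance the self-energy and H\"{o}lder error terms, produces the claimed inequality, after absorbing the subdominant term $2C\varepsilon^\kappa$ into the logarithmic one. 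The main obstacle I foresee is verifying that the self-energy cancellation works correctly at the infinitely many places outside $V \cup \{v\}$: these corrections must telescope via the product formula so that the aggregate contribution is controlled by the prefactor $|V|+1$ rather than diverging, and the bookkeeping is slightly different at archimedean and non-archimedean places because of the different natures of the regularization $[z]_\varepsilon$ and of $\lambda_w$.
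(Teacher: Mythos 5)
Your overall strategy matches the paper's proof of Theorem~\ref{thm:QuantEquid} (make Favre--Rivera-Letelier explicit: regularize $[F]$, apply the energy Cauchy--Schwarz inequality at the place $v$, bound the cross-terms by $C_w\varepsilon^{\kappa_w}$ via \eqref{eq:defHCP}, pay a $-\log\varepsilon/|F|$ diagonal correction, bound $\bigl|\int \phi\, d([F]-[F]_\varepsilon)\bigr| \leq \Lip(\phi)\,\varepsilon$, and optimize $\varepsilon^\kappa = (|V|+1)/(2C\kappa|F|)$). However, the step where you ``pass to the full adelic sum'' has a genuine gap: the inequality $\sum_{w \in M_K} ([F]_\varepsilon - \rho_w, [F]_\varepsilon - \rho_w)_w \leq 2h_\rho(F) + \cdots$ cannot hold, because its left-hand side is infinite. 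At every non-archimedean place $w \notin V$ at which all points of $F$ and all their pairwise differences are $w$-units (all but finitely many places), one computes $([F]_\varepsilon - \lambda_w, [F]_\varepsilon - \lambda_w)_w = -\log\varepsilon/|F| > 0$, coming from the diagonal of the regularized self-energy (the pairs $(\zeta(z,\varepsilon),\zeta(z,\varepsilon))$ are not excluded from \eqref{eq:defMutualEnergy}, since only the Type I diagonal is removed). These contributions all have the same sign, so the ``telescoping via the product formula'' you invoke does not exist: the product formula kills the off-diagonal, unregularized terms $\sum_w \log|z-z'|_w$, but says nothing about the $\varepsilon$-diagonal terms, which are identical at every place.

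The repair --- and this is exactly what the paper does --- is never to compare regularized and unregularized energies at the places outside $V \cup \{v\}$. One bounds the single energy $([F]_\varepsilon - \rho_v, [F]_\varepsilon - \rho_v)_v$ by the \emph{finite} sum $\sum_{w \in V \cup \{v\}} ([F]_\varepsilon - \rho_w, [F]_\varepsilon - \rho_w)_w$ (using positivity of the mass-zero regularized energies at the other places of $V$), expands only these finitely many terms against the unregularized ones, picking up $2C_w\varepsilon^{\kappa_w} - \log\varepsilon/|F|$ per place --- so the prefactor $|V|+1$ records the number of places actually regularized, not any cancellation --- and then compares $\sum_{w \in V \cup \{v\}} ([F]-\rho_w,[F]-\rho_w)_w$ with the adelic height using positivity of $([F]-\lambda_w,[F]-\lambda_w)_w$ at the places $w \notin V$. (You should also revisit the factor $2$ you place in front of $h_\rho(F)$: the theorem's bound has $h_\rho(F)$, not $2h_\rho(F)$, inside the square root, and reconciling this requires the normalization remark following \eqref{eq:adelHeight}.) With this reorganization, the rest of your computation --- the H\"older and Lipschitz estimates, the choice of $\varepsilon$, and the use of $|F| \geq 6C\kappa/(|V|+1)$ to absorb the constant into $\log|F|$ --- goes through as in the paper.
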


\begin{proof}
Following the proof of \cite[Theorem~7]{FRL}, let $0 < \varepsilon < 1$ and let $F$ be a finite $\Gal(\overline K/ K)$-invariant subset of $\overline K$. Then for each place $v \in M_K$, we have
\begin{align*}
\sum_{w \in V \cup \{ v \}} & (  [F]_\varepsilon - \rho_w, [F]_\varepsilon - \rho_w)_w = \sum_{w \in V \cup \{ v \}} \Big( ([F]-\rho_w,[F]-\rho_w)_w \\
& + 2 \left( ([F],\rho_w)_w-([F]_\varepsilon,\rho_w)_w \right) + \left( ([F]_\varepsilon,[F]_\varepsilon)_w - ([F],[F])_w \right) \Big) \\
& \leq \sum_{w \in V \cup \{ v \}} \left( ([F]-\rho_w,[F]-\rho_w)_w + 2 \left( ([F],\rho_w)_w-([F]_\varepsilon,\rho_w)_w \right) - \frac{\log \varepsilon}{|F|} \right),
\end{align*}
where the inequality follows from \cite[Lemma~12]{F}, and \cite[Proposition~4.9]{FRL}. Now, let $w$ be an archimedean place so $\rho_w = \Delta g^*_w$ as in \eqref{eq:defHCP}. We have by \cite[Lemma~2.5]{FRL},
\begin{align*}
\left| ([F]_\varepsilon,\rho_w)_w - ([F],\rho_w)_w \right| & \leq \frac{1}{|F|} \sum_{z \in F} \left| ([z]_\varepsilon, \rho_w )_w - ([z],\rho_w)_w \right| \\
& = \frac{1}{|F|} \sum_{z \in F} \left| \int_{\C_w} g^*_w d([z]_\varepsilon-[z]) \right| \\
& \leq \frac{1}{|F|} \sum_{z \in F} \int_0^1 |g^*_w(z+\varepsilon e^{2\pi i \theta}) - g^*_w(z)| d\theta \leq C_w \varepsilon^{\kappa_w}.
\end{align*}
The analogous inequality holds for non-archimedean places by \cite[Lemma~4.9]{FRL}. Also, for all $w \notin V$ we have $\rho_w=\lambda_w$ and \cite[Lemma~5.4]{FRL} gives $([F]-\rho_w,[F]-\rho_w)_w \geq 0$, and so
\begin{align} \label{eq:EnergBd}
( [F]_\varepsilon - \rho_v, [F]_\varepsilon - \rho_v )_{v} & \leq h_{\rho}(F) + \sum_{w \in V \cup \{v\}} \left( 2C_w \varepsilon^{\kappa_w} - \frac{\log \varepsilon}{|F|} \right) \notag \\
& \leq 2 C \varepsilon^\kappa - \left( |V| + 1 \right) \frac{\log \varepsilon}{|F|}.
\end{align}
We choose
$$
\varepsilon = \left( \frac{|V|+1}{2 C \kappa |F|} \right)^{1/\kappa}
$$
in order to minimise the right hand side of \eqref{eq:EnergBd}. Then we have
$$
( [F]_\varepsilon - \rho_v, [F]_\varepsilon - \rho_v )_{v} \leq h_\rho(F) + \frac{|V|+1}{\kappa|F|} \left(1+ \log \frac{2C \kappa}{|V|+1} + \log |F| \right),
$$
Then, by Lemmas~\ref{lem:ArchCauchySchwarz} and \ref{lem:nonArchCauchySchwarz}, for our given test function $\phi$ we have
$$
\left| \int_{\BP(\C_v)} \phi \, d([F]_\varepsilon - \rho_v) \right|^2 \leq \langle \phi,\phi \rangle \left( h_{\rho}(F) + \frac{|V|+1}{\kappa|F|} \left(\log \frac{6C \kappa}{|V|+1} + \log |F| \right) \right).
$$
On the other hand,
\begin{align*}
\left| \int_{\BP(\C_v)} \phi \, d([F]_\varepsilon - [F]) \right| & \leq \mathrm{Lip}(\phi) \cdot \varepsilon \\
& = \mathrm{Lip}(\phi) \left( \frac{|V|+1}{2C \kappa |F|} \right)^{1/\kappa}.
\end{align*}
Combining these and recalling that $|F| \geq (6C\kappa)/(|V|+1)$ gives the result.
\end{proof}

Favre and Rivera-Letelier \cite[Proposition~6.5]{FRL} show that the adelic measure associated to a rational function has H\"{o}lder continuous potentials. We will give bounds for the constants $C_v$, $\kappa_v$ associated to $\rho_{\pol}$ for a polynomial $\pol \in K[z]$ (see \eqref{eq:defHCP}). At archimedean places, the H\"{o}lder continuity of the local canonical height is a classical result, and we follow an approach that can be found in \cite[Theorem~1.1]{K} and \cite[Chapter~VIII,~Theorem~3.2]{CG}.

\begin{lemma} \label{lem:HolArch}
Let $\pol \in K[z]$ have degree $d \geq 2$. Let $v$ be an archimedean place of $K$, and let $F \subset \C_v$ be a compact, convex set containing the $v$-adic filled Julia set $\cK_v(p)$ of $p$. Then for any $\varepsilon > 0$, we have
$$
|\loccanheight{v}{\pol}(z)-\loccanheight{v}{\pol}(w)|_v \leq C_{\pol,v}(\varepsilon)|z-w|_v^{\kappa_{\pol,v}(\varepsilon)},
$$ 
for all $z,w \in \C_v$, where
$$
\kappa_{\pol,v}(\varepsilon) = \frac{\log d}{\log \max \{ |\pol'(z)|_v: z \in F_\varepsilon \} }
$$
and
$$
C_{\pol,v}(\varepsilon) = 2d \varepsilon^{-\kappa_{\pol,v}(\varepsilon)} \max \{ \loccanheight{v}{\pol}(z) : z \in F_\varepsilon \}.
$$
Here $F_\varepsilon$ denotes the set of $z \in \C_v$ with $\mathrm{dist}(z,F) < \varepsilon$.
\end{lemma}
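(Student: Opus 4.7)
The plan is to combine the functional equation $\loccanheight{v}{\pol}(\pol(z)) = d \cdot \loccanheight{v}{\pol}(z)$, which follows from the definition of $\loccanheight{v}{\pol}$ as an escape rate, with an iterated Mean Value Theorem argument on the convex set $F_\varepsilon$. Crucially, $F_\varepsilon$ is convex as the Minkowski sum of the convex set $F$ and the open disk $D(0, \varepsilon)$, so for any $z, w \in F_\varepsilon$ the segment $[z,w]$ lies in $F_\varepsilon$. Writing $G = \loccanheight{v}{\pol}$, $M = \max_{F_\varepsilon} |\pol'|_v$ and $L = \max_{F_\varepsilon} G$, the complex MVT therefore yields $|\pol(z) - \pol(w)|_v \leq M |z-w|_v$ whenever $z, w \in F_\varepsilon$.

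Given distinct $z, w \in \C_v$, I would let $n \geq 0$ be the smallest integer with $M^n |z-w|_v \geq \varepsilon$. Provided the iterates $\pol^k(z), \pol^k(w)$ lie in $F_\varepsilon$ for $k = 0, 1, \ldots, n-1$, an induction using the MVT bound shows $|\pol^k(z) - \pol^k(w)|_v \leq M^k |z-w|_v$ for every $k \leq n$. Applying the functional equation $n$ times then produces
\begin{equation*}
|G(z) - G(w)| = d^{-n} \, |G(\pol^n(z)) - G(\pol^n(w))|.
\end{equation*}
Because $\pol^{n-1}(z), \pol^{n-1}(w) \in F_\varepsilon$, one further application of the functional equation gives $G(\pol^n(z)) = d \cdot G(\pol^{n-1}(z)) \leq dL$ (and similarly for $w$), so the right-hand side is at most $2dL \cdot d^{-n}$. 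The minimality of $n$ forces $M^n \geq \varepsilon / |z-w|_v$, and since $\kappa_{\pol,v}(\varepsilon) = \log d / \log M$ satisfies $M^{\kappa_{\pol,v}(\varepsilon)} = d$, this rearranges to
\begin{equation*}
d^{-n} = (M^{-n})^{\kappa_{\pol,v}(\varepsilon)} \leq (|z-w|_v / \varepsilon)^{\kappa_{\pol,v}(\varepsilon)}.
\end{equation*}
Putting everything together produces the claimed Hölder bound with constant exactly $C_{\pol,v}(\varepsilon) = 2dL \, \varepsilon^{-\kappa_{\pol,v}(\varepsilon)}$; the factor of $d$ is accounted for by the final use of the functional equation on points that may themselves have just left $F_\varepsilon$.

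The main obstacle is justifying that this iteration is well-posed for arbitrary $z, w \in \C_v$, namely verifying that the orbits $\pol^k(z)$ and $\pol^k(w)$ actually remain in $F_\varepsilon$ for all $k \leq n-1$. I would handle this by splitting into cases. If the orbits of both $z$ and $w$ remain in $F_\varepsilon$ for every $k \geq 0$, then $z, w \in \cK_v(\pol)$ and $G(z) = G(w) = 0$, so the inequality is trivial. If instead $z$ or $w$ (or some early iterate) is far from $F$ and so has large absolute value, then one can exploit the Böttcher-type asymptotic $G(z) = \log |z|_v + O(1)$, which furnishes a Lipschitz estimate with arbitrarily small constant for $|z|_v$ large and reduces matters to the bounded iteration scheme above. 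The delicate bookkeeping is in combining these regimes so that the resulting global Hölder constant still takes precisely the form $C_{\pol,v}(\varepsilon)$ with exponent $\kappa_{\pol,v}(\varepsilon)$ stated in the lemma.
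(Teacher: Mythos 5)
Your scheme works in two regimes: (i) both forward orbits stay in $F_\varepsilon$ for all time, so $z,w\in\cK_v(\pol)$ and both heights vanish; (ii) both orbits stay in $F_\varepsilon$ up to time $n-1$, where $n$ is minimal with $M^n|z-w|_v\geq\varepsilon$, in which case your computation does give the constant $2dL\varepsilon^{-\kappa}$. The genuine gap is the remaining regime, which your case split does not cover: $z,w\notin\cK_v(\pol)$ with $|z-w|_v$ very small, whose orbits leave $F_\varepsilon$ at some time $k_0$ much smaller than $n$. There the best your method yields is $G(\pol^{k_0}(z)),G(\pol^{k_0}(w))\leq dL$ (via one application of the functional equation from $F_\varepsilon$), hence $|G(z)-G(w)|\leq 2dL\,d^{-k_0}$ -- a bound independent of $|z-w|_v$, which cannot be absorbed into $C_{\pol,v}(\varepsilon)|z-w|_v^{\kappa}$ when $|z-w|_v$ is tiny. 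Your proposed patch via the B\"{o}ttcher asymptotic $G(z)=\log|z|_v+O(1)$ does not help here: escape from $F_\varepsilon$ happens at distance $\approx\varepsilon$ from $F$, i.e.\ at bounded modulus, not at large $|z|_v$, and in any case a crude size bound on the two values is not a modulus-of-continuity estimate. What is missing is a mechanism converting smallness of $|z-w|_v$ into smallness of the \emph{difference} $|G(z)-G(w)|$ for nearby points off $\cK_v(\pol)$, e.g.\ a gradient or Harnack-type estimate for the harmonic function $G$ on the complement of the filled Julia set.

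This is exactly how the paper closes the argument. It first proves the one-point bound $\loccanheight{v}{\pol}(z)\leq\tfrac{1}{2}C_{\pol,v}(\varepsilon)\,\mathrm{dist}(z,\cK_v(\pol))^{\kappa_{\pol,v}(\varepsilon)}$ by iterating not a pair of points but the segment $L=[z_0,z]$ from the nearest point $z_0\in\cK_v(\pol)$, up to the first time $N$ with $\pol^N(L)\not\subseteq F_\varepsilon$ (a one-orbit version of your iteration-until-escape idea, with the chain rule along $L$ replacing your MVT on $[z,w]$). It then deduces the two-point H\"{o}lder estimate by splitting on whether $|z-w|_v\geq\mathrm{dist}(z,\cK_v(\pol))$ (apply the one-point bound at both points) or $|z-w|_v<\mathrm{dist}(z,\cK_v(\pol))$, in which case Harnack's inequality for the positive harmonic function $\loccanheight{v}{\pol}$ on $D(z,\mathrm{dist}(z,\cK_v(\pol)))$ gives $|\loccanheight{v}{\pol}(z)-\loccanheight{v}{\pol}(w)|\leq\frac{\loccanheight{v}{\pol}(z)+\loccanheight{v}{\pol}(w)}{\mathrm{dist}(z,\cK_v(\pol))}|z-w|_v$, which combined with the one-point bound yields the stated constant. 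To complete your proposal you would need to supply an ingredient of this Harnack/gradient type; the iteration and functional equation alone cannot produce it.
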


\begin{proof}
Identify $\C_v$ with $\C$, write $\cK = \cK_v(\pol)$, $C(\varepsilon) =C_{\pol,v}(\varepsilon)$, and $\kappa(\varepsilon)=\kappa_{\pol,v}(\varepsilon)$. Let $z \in \C \setminus \cK$ and fix $z_0 \in \cK$ with $|z-z_0| = \mathrm{dist}(z,\cK)$. Note that the line segment $L:=[z_0,z]$ is contained in $F_\varepsilon$ since $F$ is convex. Moreover, since $z \notin \cK$, there exists a minimal integer $N$ such that $p^N(L) \not\subseteq F_\varepsilon$. Then, for all $n \leq N$ and all $w \in L$, by the chain rule,
$$
|(\pol^n)'(w)| = |\pol'(\pol^{n-1}(w)) \cdots \pol'(w)| \leq \max \{ |\pol'(z)| : z \in F_\varepsilon \}^n.
$$
On the other hand, there exists $z_1 \in L$ such that $\pol^N(z_1) \notin F_\varepsilon$, and so since $z_0 \in \cK$ gives $\pol^N(z_0) \in \cK \subset F$, we have
\begin{align*}
\varepsilon & < |\pol^N(z_1) - \pol^N(z_0)| = \left| \int_{[z_0,z_1]} (\pol^N)'(w)dw \right| \\
& \leq \int_L |(\pol^N)'(w)|dw \leq \max \{ |\pol'(z)| : z \in F_\varepsilon \}^N \mathrm{dist}(z,\cK),
\end{align*}
and so
$$
\varepsilon^{-\kappa(\varepsilon)}\mathrm{dist}(z,\cK)^{\kappa(\varepsilon)} \geq \left(\max \{ |\pol'(z)| : z \in F_\varepsilon \}^{\kappa(\varepsilon)} \right)^{-N} = d^{-N}.
$$
Thus,
\begin{align} \label{eq:HolJulBd}
\loccanheight{v}{\pol}(z) & = d \loccanheight{v}{\pol}(\pol^{N-1}(z)) d^{-N} \notag \\
& \leq d \varepsilon^{-\kappa(\varepsilon)}\mathrm{dist}(z,\cK)^{\kappa(\varepsilon)} = \frac{C(\varepsilon)}{2} \mathrm{dist}(z,\cK)^{\kappa(\varepsilon)}.
\end{align}
Now, let $z,w \in \C$, and suppose $\mathrm{dist}(z,\cK) \geq \mathrm{dist}(z,\cK)$. If $|z-w| \geq \mathrm{dist}(z,\cK)$, then the result follows from \eqref{eq:HolJulBd}. Otherwise, $w$ lies in the disk centred at $z$ of radius $\mathrm{dist}(z,\cK)$, on which $\loccanheight{v}{\pol}$ is a positive harmonic function. Therefore by Harnack's inequality,
$$
\frac{\mathrm{dist}(z,\cK)-|z-w|}{\mathrm{dist}(z,\cK)+|z-w|} \loccanheight{v}{\pol}(z) \leq \loccanheight{v}{\pol}(w) \leq \frac{\mathrm{dist}(z,\cK)+|z-w|}{\mathrm{dist}(z,\cK)-|z-w|} \loccanheight{v}{\pol}(z).
$$
Rearranging this and applying \eqref{eq:HolJulBd} gives
\begin{align*}
|\loccanheight{v}{\pol}(z)-\loccanheight{v}{\pol}(w)| & \leq \frac{\loccanheight{v}{\pol}(z)+\loccanheight{v}{\pol}(w)}{\mathrm{dist}(z,\cK)} |z-w| \\
& \leq \frac{C(\varepsilon) \mathrm{dist}(z,\cK)^{\kappa(\varepsilon)}}{\mathrm{dist}(z,\cK) |z-w|^{\kappa(\varepsilon)-1}} |z-w|^{\kappa(\varepsilon)} \leq C(\varepsilon) |z-w|^{\kappa(\varepsilon)},
\end{align*}
noting that $|z-w| < \mathrm{dist}(z,\cK)$ and $\kappa(\varepsilon) < 1$.
\end{proof}

In particular, when $\pol = \unipol$ we have the following.

\begin{corollary} \label{cor:ArchHolUni}
Let $v$ be an archimedean place of $K$. Then for all $z,w \in \C_v$,
$$
|\loccanheight{v}{\unipol}(z)-\loccanheight{v}{\unipol}(w)| \leq C_v |z-w|^{\kappa_v},
$$
where $C_v = 4 \log 6 + 2 \log^+ |c|_v$, and
$$
\kappa_v = \frac{2 \log 2}{2 \log 6 + \log^+ |c|_v}.
$$
\end{corollary}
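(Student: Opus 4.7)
The plan is to apply Lemma~\ref{lem:HolArch} to $\pol = \unipol$ with a carefully chosen compact convex set $F \supseteq \cK_v(\unipol)$ and radius $\varepsilon > 0$, then simplify the resulting exponent and constant to the stated forms.

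Concretely, I would set $F := \overline{D}(0, R)$ with $R := 2\max\{1, |c|_v^{1/2}\}$ and $\varepsilon := \max\{1, |c|_v^{1/2}\}$. The inclusion $\cK_v(\unipol) \subseteq F$ follows from a short escape-rate check: for $|z|_v \geq R$ one has $|c|_v \leq |z|_v^2/4$, whence $|\unipol(z)|_v \geq |z|_v^2 - |c|_v \geq 3|z|_v^2/4 > |z|_v$, so iterating pushes $|\unipol^n(z)|_v \to \infty$. With these choices $F_\varepsilon = D(0, 3\max\{1, |c|_v^{1/2}\})$, on which $|\unipol'(z)|_v = |2z|_v$ is bounded above by $6\max\{1, |c|_v^{1/2}\}$. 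Since $d = 2$, Lemma~\ref{lem:HolArch} then returns the exponent
$$
\kappa_{\unipol,v}(\varepsilon) = \frac{\log 2}{\log 6 + \tfrac{1}{2}\log^+|c|_v} = \frac{2\log 2}{2\log 6 + \log^+|c|_v} = \kappa_v.
$$

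For the constant, since $\varepsilon \geq 1$ and $\kappa_v > 0$ we have $\varepsilon^{-\kappa_v} \leq 1$, so it suffices to bound $\sup_{z \in F_\varepsilon} \loccanheight{v}{\unipol}(z)$ by $\log 6 + \tfrac{1}{2}\log^+|c|_v$. When $|c|_v \geq 1$, or when $|z|_v \geq |c|_v^{1/2}$, Lemma~\ref{lem:LocalHeightProps}(a) directly gives
$$
\loccanheight{v}{\unipol}(z) \leq \log 2 + \max\{\tfrac{1}{2}\log|c|_v,\,\log|z|_v\} \leq \log 6 + \tfrac{1}{2}\log^+|c|_v,
$$
using $|z|_v < 3\max\{1, |c|_v^{1/2}\}$ on $F_\varepsilon$.

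The one place I would expect friction is the residual regime $|c|_v < 1$ with $|z|_v < |c|_v^{1/2}$, where Lemma~\ref{lem:LocalHeightProps}(a) does not directly apply. I would dispatch this by iteration: as long as $|\unipol^k(z)|_v < |c|_v^{1/2}$, the recursion $|\unipol^{k+1}(z)|_v \leq |\unipol^k(z)|_v^2 + |c|_v < 2|c|_v$ holds, so either the orbit escapes the disk $\overline{D}(0, |c|_v^{1/2})$ at some first step $K+1$ with value bounded by $2|c|_v < 2$, in which case the functional equation $\loccanheight{v}{\unipol}(z) = 2^{-(K+1)}\loccanheight{v}{\unipol}(\unipol^{K+1}(z))$ combined with Lemma~\ref{lem:LocalHeightProps}(a) gives $\loccanheight{v}{\unipol}(z) \leq \log 2 \leq \log 6$; or the orbit never escapes, so $z$ lies in the filled Julia set and $\loccanheight{v}{\unipol}(z) = 0$. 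Either way the required bound holds, and combining everything yields $C_{\unipol,v}(\varepsilon) \leq 4(\log 6 + \tfrac{1}{2}\log^+|c|_v) = 4\log 6 + 2\log^+|c|_v = C_v$, completing the argument.
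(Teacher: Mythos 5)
Your proposal is correct and follows essentially the same route as the paper: apply Lemma~\ref{lem:HolArch} with $F=\overline D(0,2\max\{1,|c|_v^{1/2}\})$, bound $|\unipol'|_v$ by $6\max\{1,|c|_v^{1/2}\}$ on the enlarged set, and bound $\loccanheight{v}{\unipol}$ there via Lemma~\ref{lem:LocalHeightProps}(a); the paper simply takes $\varepsilon=1$ (so $F_1\subseteq D(0,3\max\{1,|c|_v^{1/2}\})$) where you take $\varepsilon=\max\{1,|c|_v^{1/2}\}$ and discard $\varepsilon^{-\kappa_v}\leq 1$, which is immaterial. Your explicit treatment of the regime $|c|_v<1$, $|z|_v<|c|_v^{1/2}$ is a valid (and welcome) filling-in of a step the paper glosses over.
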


\begin{proof}
Let $F$ be the disk centred at the origin of radius $R := 2 \max \{1, |c|_v^{1/2} \}$. Then it is easy to see that $F \supseteq \cK_v(\unipol)$. Note that $R+1 \leq 3 \max \{ 1, |c|_v^{1/2} \}$, so on $F_1$,
$$
|\unipol'(z)|_v = \left| 2 z \right|_v \leq 6 \max \{ 1, |c|_v^{1/2} \},
$$
and by Lemma~\ref{lem:LocalHeightProps}~(a),
$$
\loccanheight{v}{\unipol}(z) \leq \log 2 + \log \left( 3 \max \{ 1, |c|_v^{1/2} \} \right) .
$$
The result follows from applying Lemma~\ref{lem:HolArch} with $\varepsilon = 1$.
\end{proof}

We now determine H\"{o}lder continuous potentials for the measure $\rho_{r,v}$ associated to a rational function $r \in K(z)$ at a non-archimedean place $v$, following \cite[Proposition~6.5]{FRL} (see also \cite[Theorem~10.35]{BR}).

\begin{lemma} \label{lem:HoldPotGen}
Where $\rat \in K(z)$ has degree $d \geq 2$ and $v$ is a place of bad reduction for $\rat$, there exists $g_v : \BP(\C_v) \to \R$ such that $\Delta g_v = \rho_{\rat,v} - \lambda_v$, and
$$
|g_v(z)-g_v(w)| \leq C_v \ud(z,w)^{\kappa_v}
$$
for all $z,w \in \BP(\C_v)$, where
$$
C_v = \frac{4d^2 \max_{0 \leq k < d} d_{\H_v}(\zeta_G,w_k)}{L_v \min_{0 \leq k < d} \diam_G(w_k)} + d \sum_{k=0}^{d-1} d_{\H_v}(\zeta_G,w_k)
$$
and
$$
\kappa_v = \frac{\log d}{\log L_v}.
$$
Here $L_v := \max \{ 2d, \Lip_v(\rat) \}$, where $\Lip_v(\rat)$ is the Lipschitz constant of $\rat$ as a function on $\BP(\C_v)$, and $w_0,\ldots,w_{d-1} \in \BP(\C_v)$ are the preimages of the Gauss point $\zeta_G$ under $\rat$.
\end{lemma}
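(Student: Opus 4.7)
The plan, following the strategy of \cite[Proposition~6.5]{FRL}, is to build $g_v$ as a geometric series in the transfer operator of $\rat$, applied to an explicit elementary potential whose Laplacian is $\rat^*[\zeta_G]-d[\zeta_G]$. Concretely, set
\[
\psi(x) := -\sum_{k=0}^{d-1}\langle x,w_k\rangle_G
\quad\text{and}\quad
g_v(x) := \sum_{n=0}^{\infty}\frac{1}{d^{n+1}}\,\psi\bigl(\rat^n(x)\bigr).
\]
Using \eqref{eq:LaplacGromov} and the functoriality $\Delta(\varphi\circ\rat)=\rat^*\Delta\varphi$, one computes $d^{-(n+1)}\Delta(\psi\circ\rat^n)=\mu_{n+1}-\mu_n$ with $\mu_n:=d^{-n}(\rat^n)^*[\zeta_G]$. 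The partial sums telescope to $\mu_N-[\zeta_G]\to\rho_{\rat,v}-[\zeta_G]=\rho_{\rat,v}-\lambda_v$, so formally $\Delta g_v=\rho_{\rat,v}-\lambda_v$.

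For convergence and boundedness I would first use $0\le\langle x,y\rangle_G\le d_{\H_v}(\zeta_G,y)$ to obtain $\|\psi\|_\infty\le\sum_k d_{\H_v}(\zeta_G,w_k)$, which gives absolute uniform convergence of the series and justifies interchanging $\Delta$ with the sum (in the sense of Radon measures).

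The technical heart is the H\"older estimate. I would first establish a Lipschitz bound
\[
|\psi(x)-\psi(x')|\le M\,\ud(x,x'),\qquad
M\le\frac{2d\,\max_k d_{\H_v}(\zeta_G,w_k)}{\min_k\diam_G(w_k)},
\]
by analysing each Gromov product $\langle\cdot,w_k\rangle_G$: it is monotone along paths from $\zeta_G$, bounded above by $d_{\H_v}(\zeta_G,w_k)$, and, near $w_k$, its derivative with respect to the path parameter (measured by $\ud$) is of order $1/\diam_G(w_k)$, since the exponential relation $\diam_G(x)=\exp(-d_{\H_v}(\zeta_G,x))$ converts $d_{\H_v}$-increments into $\ud$-increments with local Jacobian $\diam_G$. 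Combined with $\ud(\rat^n(x),\rat^n(x'))\le L_v^n\,\ud(x,x')$, this yields
\[
|\psi(\rat^n(x))-\psi(\rat^n(x'))|\le\min\bigl\{M L_v^n\,\ud(x,x'),\,2\|\psi\|_\infty\bigr\}.
\]
Splitting the series at $N:=\bigl\lceil\log(1/\ud(x,x'))/\log L_v\bigr\rceil$, the head is bounded by $ML_v^N\ud(x,x')/(d^N(L_v-d))$ and the tail by $2\|\psi\|_\infty/(d^N(d-1))$. Using $L_v\ge 2d$ and $L_v^N\ud(x,x')\le L_v$ together with $d^{-N}\le\ud(x,x')^{\kappa_v}$ where $\kappa_v=\log d/\log L_v$, these collapse into
\[
|g_v(x)-g_v(x')|\le\left(\frac{4d^2\max_k d_{\H_v}(\zeta_G,w_k)}{L_v\min_k\diam_G(w_k)}+d\sum_{k=0}^{d-1}d_{\H_v}(\zeta_G,w_k)\right)\ud(x,x')^{\kappa_v},
\]
which is the claimed bound.

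The main obstacle I foresee is the explicit Lipschitz constant $M$ for $\psi$ with respect to $\ud$: the metric $\ud$ degenerates near $\P^1(\C_v)$ while $d_{\H_v}$ blows up there, so one must argue carefully case-by-case (whether $x,x'$ share the branch point $x\vee_G w_k$, whether $\ud(x,x')$ is smaller or larger than $\diam_G(w_k)$, etc.) to obtain a slope of order $1/\min_k\diam_G(w_k)$ uniformly in $x$. Once this piece is in hand, the remainder of the argument is a routine geometric-series splitting and bookkeeping.
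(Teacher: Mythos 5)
Your construction is exactly the paper's: the paper takes $g=-\frac{1}{d}\sum_{k}\langle\cdot,w_k\rangle_G$ (your $\psi/d$), sets $g_v=\sum_{n\ge0}d^{-n}\,g\circ\rat^n$, bounds $\sup|g|$ via $0\le\langle x,w_k\rangle_G\le d_{\H_v}(\zeta_G,w_k)$, bounds $\Lip(g)$ by $2\max_k d_{\H_v}(\zeta_G,w_k)/\min_k\diam_G(w_k)$, and then does the same geometric-series splitting at a cutoff $N$ determined by $\ud(z,w)$ and $L_v$. So the route is the same; two points, however, keep your write-up short of a proof. First, the Lipschitz estimate for $\psi$ is not a peripheral technicality to be deferred: it is the bulk of the paper's argument. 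The paper reduces to pairs with $y\in[x,\zeta_G]$, notes $H_k(x)=H_k(y)$ unless $y$ lies on $[x\vee_G w_k,\zeta_G]$, and then treats separately the cases $\diam_G(x)\le\frac{1}{2}\diam_G(w_k)$ (where $\ud(x,y)\ge\frac{1}{2}\diam_G(w_k)$ while $|H_k(x)-H_k(y)|\le d_{\H_v}(\zeta_G,w_k)$) and $\diam_G(x)\ge\frac{1}{2}\diam_G(w_k)$ (where one uses that $\log$ is Lipschitz with constant $2/\diam_G(w_k)$ on $[\diam_G(w_k)/2,1]$). Your heuristic via $\diam_G=\exp(-d_{\H_v}(\zeta_G,\cdot))$ is the right mechanism, but you have asserted the constant rather than derived it, and this is precisely the piece the lemma's $C_v$ hinges on.

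Second, your final bookkeeping does not deliver the stated constant. With $N=\lceil\log(1/\ud(x,x'))/\log L_v\rceil$ and the bounds you invoke ($L_v^N\ud\le L_v$, $d^{-N}\le\ud^{\kappa_v}$, $L_v\ge2d$), the head term is only bounded by $2M\,\ud^{\kappa_v}$, i.e.\ by $\frac{4d\max_k d_{\H_v}(\zeta_G,w_k)}{\min_k\diam_G(w_k)}\ud^{\kappa_v}$, which exceeds the first term $\frac{4d^2\max_k d_{\H_v}(\zeta_G,w_k)}{L_v\min_k\diam_G(w_k)}$ of $C_v$ by a factor of order $L_v/d\ge2$ — and $L_v$ is typically enormous at places of bad reduction, so this is not a harmless constant. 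The fix is the paper's cutoff $N=\lfloor\log(1/\ud(x,x'))/\log L_v\rfloor$, for which $L_v^N\ud\le1$ and $d^{-N}\le d\,\ud^{\kappa_v}$, giving head $\le\frac{2dM}{L_v}\ud^{\kappa_v}=\frac{4d^2\max_k d_{\H_v}(\zeta_G,w_k)}{L_v\min_k\diam_G(w_k)}\ud^{\kappa_v}$ as claimed; for the tail with this choice you should bound each term by $\|\psi\|_\infty$ rather than $2\|\psi\|_\infty$ (legitimate since $\psi$ takes values in the interval $[-\sum_k d_{\H_v}(\zeta_G,w_k),0]$, which is how the paper gets the second term $d\sum_k d_{\H_v}(\zeta_G,w_k)$ even for $d=2$). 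With those two repairs your argument coincides with the paper's proof.
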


\begin{proof}
Fix a place $v$ of bad reduction and write $\lambda_v = \lambda$, $| \cdot |_v = | \cdot |$. Following the proof of \cite[Proposition~6.5]{FRL} (see also \cite[Theorem~10.35]{BR}), if we take a Lipschitz (with respect to $\ud$) potential $g$ (which we will specify later) such that $\Delta g = d^{-1} \rat^* \lambda - \lambda$, then since $\rho_{\rat,v} = \lim_{n \to \infty} d^{-n} (\rat^n)^* \lambda$, we can deduce that $\rho_{\rat,v} = \lambda + \Delta g_v$ where $g_v = \sum_{k=0}^\infty d^{-k} g \circ \rat^k$. Let $N \geq 0$ be an integer. We have for any $z,w \in \BP(\C_v)$,
\begin{align*}
|g_v(z)-g_v(w)| & \leq \sum_{k=0}^{N-1} d^{-k} |g \circ \rat^k(z) - g \circ \rat^k(w)| + \sup |g| \sum_{k=N}^\infty d^{-k} \\
& \leq \left( \sum_{k=0}^{N-1} \frac{\Lip(g) \, \Lip(\rat)^k}{d^k} \right) \ud(z,w) + \sup |g| \sum_{k=N}^\infty d^{-k}.
\end{align*}
Then, writing $E=\Lip(g)$, $L=\max \{2d, \Lip(\rat) \}$ and $B = \sup |g|$, we have
\begin{align*}
|g_v(z)-g_v(w)| & \leq \frac{E}{\frac{L}{d}-1} \left( \frac{L}{d} \right)^N \ud(z,w) + dB \left( \frac{1}{d} \right)^N \\
& = \left( \frac{E}{\frac{L}{d}-1} L^N \ud(z,w) + dB \right) \left( \frac{1}{d} \right)^N.
\end{align*}
If we take $N=\lfloor-\log(\ud(z,w))/\log(L) \rfloor$, then $N = -\log_L \ud(z,w) - \delta$ for some $0 \leq \delta < 1$, and we obtain
\begin{align} \label{eq:HolderBound}
|g_v(z)-g_v(w)| & \leq \left( \frac{E}{\frac{L}{d}-1} L^{-\log_L \ud(z,w)} \ud(z,w) L^{-\delta} + dB \right) d^{\frac{\log \ud(z,w)}{\log L} + \delta} \notag \\
& = d^{\delta} \left( \frac{dE}{L-d} L^{-\delta} + dB \right) \ud(z,w)^{\frac{\log d}{\log L}} \notag \\ 
& < \left(\frac{2d^2 E}{L}+d^2B \right) \ud(z,w)^{\frac{\log d}{\log L}},
\end{align}
noting that $L \geq 2d$.

Now let $w_0,\ldots,w_{d-1}$ be the preimages of the Gauss point $\zeta_G$ under $\rat$. With $\langle \cdot, \cdot \rangle_G$ defined as in \eqref{eq:GromovDef}, let
$$
H_k(z) = \langle z, w_k \rangle_G.
$$
That is, referring to \eqref{eq:GromovDef},
$$
H_k(z) = d_{\H_v}(m_z, \zeta_G),
$$
where $d_{\H_v}$ is the path distance defined in \eqref{eq:defPathDistance}, and $m_z = z \vee_G w_k$ denotes the unique point of $[z,w_k] \cap [z, \zeta_G] \cap [w_k, \zeta_G]$. Clearly $H_k$ is locally constant off the path $[\zeta_G, w_k]$, and so
\begin{equation} \label{eq:supHk}
0 = H_k(\zeta_G) \leq H_k(z) \leq H_k(w_k) = d_{\H_v}(\zeta_G, w_k).
\end{equation}

We have from \eqref{eq:LaplacGromov}
$$
\Delta H_k = \lambda - [w_k],
$$
so if $g = - \frac{1}{d} \sum_{0 \leq k < d} H_k$, then $\Delta g = d^{-1} \rat^* \lambda - \lambda$, as desired. Note first that from \eqref{eq:supHk}, $g$ is bounded with
$$
B = \sup |g| \leq \frac{1}{d} \sum_{k=0}^{d-1} d_{\H_v} (\zeta_G, w_k). 
$$
Moreover, $g$ is Lipschitz with constant 
$$
\Lip(g) = E \leq  \frac{2 \max_{0 \leq k < d} d_{\H_v}(\zeta_G,w_k)}{\min_{0 \leq k < d} \diam_G(w_k)},
$$ 
where $\diam_G$ is defined in \eqref{eq:defGaussDiam}.

Indeed, it suffices to show that each $H_k$ is Lipschitz with constant $E$ for points $x,y \in \BP(\C_v)$ with $y \in [x,\zeta_G]$, since given arbitrary $x,y  \in \BP(\C_v)$, putting $z = x \vee_G y$ gives
\begin{align*}
|H_k(x)-H_k(y)| & \leq |H_k(x)-H_k(z)|+|H_k(z)-H_k(y)| \\
& \leq E \ud(x,z)+ E \ud(z,y) \leq E \ud(x,y).
\end{align*}
Take $x,y \in \BP(\C_v)$ with $y \in [x,\zeta_G]$. Writing $[x,\zeta_G] = [x,m_x) \cup [m_x,\zeta_G]$, and noting that $[m_x,\zeta_G] \subseteq [w_k,\zeta_G]$ we see that if $y \in [x,m_x)$, then $m_y = m_x$ and so $H_k(x)=H_k(y)$, and otherwise $m_y = y$. Suppose we are in the latter case. We have
\begin{align*}
|H_k(x) - H_k(y)| & = |d_{\H_v}(m_x,\zeta_G) - d_{\H_v}(y,\zeta_G)| \\
& = d_{\H_v}(m_x,y) \leq d_{\H_v}(w_k,\zeta_G),
\end{align*}
and moreover $x \vee_G y = y$, whence
$$
\ud(x,y) = \diam_G(y)-\diam_G(x).
$$
Since $y \in [\zeta_G,w_k]$, $\diam_G(y) \geq \diam_G(w_k)$. 

Suppose that
$$
\diam_G(x) \leq \frac{1}{2} \diam_G(w_k).
$$
Then
$$
\ud(x,y) \geq \frac{1}{2}\diam_G(w_k),
$$
and so
\begin{align*}
|H_k(x)-H_k(y)| & \leq d_{\H_v}(w_k,\zeta_G) \\
& = \frac{2 d_{\H_v}(w_k,\zeta_G)}{\diam_G(w_k)} \cdot \frac{1}{2} \diam_G(w_k) \leq E \ud(x,y).
\end{align*}
On the other hand, if $\diam_G(x) \geq \diam_G(w_k)/2$,
then, since $\log$ is Lipschitz on $[\diam_G(w_k)/2,1]$ with constant $2/\diam_G(w_k)$, we have
\begin{align*}
|H_k(x)-H_k(y)| = d_{\H_v}(m_x,y) & \leq d_{\H_v}(x,y) \\
& = |d_{\H_v}(\zeta_G,x)-d_{\H_v}(\zeta_G,y)| \\
& = |\log \diam_G(y) - \log \diam_G(x)|\\
& \leq \frac{2}{\diam_G(w_k)} |\diam_G(y)-\diam_G(x)| \\
& = \frac{2}{\diam_G(w_k)} \ud(x,y) \leq E \ud(x,y),
\end{align*}
as desired.

Substituting these values for $E$ and $B$ into \eqref{eq:HolderBound} gives
\begin{align*}
|g_v(z)-g_v(w)| \leq \left( \frac{4d^2 \max_{0 \leq k < d} d_{\H_v}(\zeta_G,w_k)}{L \min_{0 \leq k < d} \diam_G(w_k)} + d \sum_{k=0}^{d-1} d_{\H_v}(\zeta_G,w_k) \right) \ud(z,w)^{\frac{\log d}{\log L}}
\end{align*}
completing the proof.
\end{proof}

The action of a unicritical polynomial $\unipol$ on Berkovich points is easy to compute using results found in \cite{B}. We can hence explicitly bound the quantities in Lemma~\ref{lem:HoldPotGen} and obtain the following.

\begin{lemma} \label{lem:HoldPot}
Where $\unipol(z)=z^2+c$, $\rho_{\unipol}$ has H\"{o}lder continuous potentials with exponents
\begin{equation} \label{eq:HolderExponent}
\kappa_v \geq \begin{cases} \frac{2 \log 2}{2 \log 6 + \log^+|c|_v} & v \in M_K^\infty \\
1 & v \in M^{0,\unipol}_{K,\mathrm{good}} \\
\frac{\log 2}{\log 4 + 4 \log |c|_v} & v \in M^{0,\unipol}_{K,\mathrm{bad}}
\end{cases}
\end{equation}
and constants
\begin{equation} \label{eq:HolderConstant}
C_v \leq \begin{cases} 4 \log 6 + 2 \log^+ |c|_v & v \in M_K^\infty \\ 0 & v \in M^{0,\unipol}_{K,\mathrm{good}} \\ 1 + 6 \log |c|_v & v \in M^{0,\unipol}_{K,\mathrm{bad}}. \end{cases}
\end{equation}

\end{lemma}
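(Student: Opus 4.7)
The proof naturally splits into three cases according to the type of place $v$.

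\textbf{Archimedean case.} By equation \eqref{eq:ArchPotLCH}, $\Delta \loccanheight{v}{\unipol} = \rho_{\unipol,v}$, so I take $g_v^* = \loccanheight{v}{\unipol}$ in the definition \eqref{eq:defHCP} of a Hölder continuous potential. The claimed exponent and constant then follow immediately from Corollary~\ref{cor:ArchHolUni}.

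\textbf{Good reduction case.} If $|c|_v \leq 1$, then $\unipol$ has good reduction at $v$, so the Berkovich Julia set is $\{\zeta_G\}$ (see \textsection\textsection\ref{subsec:Berk}) and hence $\rho_{\unipol,v}$ is the Dirac mass at the Gauss point, which equals $\lambda_v$. Taking $g_v \equiv 0$ gives $\Delta g_v = 0 = \rho_{\unipol,v} - \lambda_v$ and the trivial Hölder bound with $C_v = 0$ (any $\kappa_v \leq 1$ is admissible, so in particular $\kappa_v = 1$).

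\textbf{Bad reduction case.} Here $|c|_v > 1$ and the plan is to apply Lemma~\ref{lem:HoldPotGen} with $d = 2$, after computing the geometric inputs explicitly. Using the Berkovich action formula $\unipol(\zeta(a,r)) = \zeta(a^2 + c, \max\{|2a|_v r, r^2\})$ and solving $\unipol(w) = \zeta_G$, I find that the two preimages of $\zeta_G$ are $w_0, w_1 = \zeta(\pm\sqrt{-c}, r_0)$, where
\[
r_0 = \min\{1,\ (|2|_v|c|_v^{1/2})^{-1}\}.
\]
Since $|c|_v > 1$, the path $[\zeta_G, w_k]$ in the $\R$-tree goes up from $\zeta_G = \zeta(0,1)$ to the branching point $\zeta(0,|c|_v^{1/2}) = \zeta(\beta_k, |c|_v^{1/2})$ and back down to $w_k$, yielding (by additivity of $d_{\H_v}$ along paths)
\[
d_{\H_v}(\zeta_G, w_k) = \log|c|_v - \log r_0, \qquad \diam_G(w_k) = r_0/|c|_v.
\]

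The main technical obstacle will be to bound the Lipschitz constant $\Lip_v(\unipol)$ of $\unipol : \BP(\C_v) \to \BP(\C_v)$ with respect to the metric $\ud$ of \eqref{eq:FRLDist}. I will handle this by a case analysis on the relative tree positions of pairs of points $x,y \in \BP(\C_v)$: whether they lie on the portion of the tree connecting $\zeta_G$ to the singular points $\pm\sqrt{-c}$, to $\infty$, or within branches away from the Gauss point. In each regime, the explicit action formula gives exact control on how diameters expand, and using the change of variable $w = 1/z$ near $\infty$ shows that $\unipol$ is even contracting there. The target bound is $L_v := \max\{2d, \Lip_v(\unipol)\} \leq 4|c|_v^4$, which by Lemma~\ref{lem:HoldPotGen} gives the claimed exponent $\kappa_v \geq (\log 2)/(\log 4 + 4\log|c|_v)$. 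Plugging the values of $d_{\H_v}(\zeta_G,w_k)$ and $\diam_G(w_k)$ into the formula for $C_v$ in Lemma~\ref{lem:HoldPotGen} and simplifying (using $|2|_v \leq 1$ to absorb the ramification contributions at places above $2$) then yields $C_v \leq 1 + 6\log|c|_v$, completing the proof.
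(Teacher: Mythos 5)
Your overall strategy is the same as the paper's: the archimedean case is read off from Corollary~\ref{cor:ArchHolUni} together with $\Delta \loccanheight{v}{\unipol} = \rho_{\unipol,v}$, the good-reduction case is trivial since $\rho_{\unipol,v} = \lambda_v$ there, and the bad-reduction case feeds explicit data into Lemma~\ref{lem:HoldPotGen}. Your computation of the preimages $w_0,w_1 = \zeta\bigl(\pm(-c)^{1/2}, r_0\bigr)$ of the Gauss point with $r_0 = \min\{1, (|2|_v|c|_v^{1/2})^{-1}\}$, and of $d_{\H_v}(\zeta_G,w_k) = \log|c|_v - \log r_0 \leq \tfrac{3}{2}\log|c|_v$ and $\diam_G(w_k) = r_0/|c|_v \geq |c|_v^{-3/2}$, agrees exactly with the paper's.

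The gap is the Lipschitz bound. The paper does not prove $\Lip_v(\unipol) \leq 4|c|_v^4$ by a tree-position analysis: it quotes \cite[Theorem~0.1]{RW}, which bounds the Lipschitz constant of a rational map on $(\BP(\C_v),\ud)$ in terms of $|\Res(\unipol)|_v$, and specializes. In your write-up this step is only announced (``I will handle this by a case analysis\dots; the target bound is $L_v \leq 4|c|_v^4$'') and never carried out, yet it is precisely the input producing the exponent $\kappa_v \geq \log 2/(\log 4 + 4\log|c|_v)$. The bound is true and your sketch is plausible --- working with the spherical metric one finds the worst expansion occurs for pairs of points near a square root of $-c$, giving a Lipschitz constant of order $|2|_v|c|_v^{3/2}$, well below $4|c|_v^4$ --- but as written it is an unproved assertion; you must either execute the case analysis or invoke a result such as \cite[Theorem~0.1]{RW}. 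A secondary point you gloss over with ``simplifying'': to get $C_v \leq 1 + 6\log|c|_v$ from Lemma~\ref{lem:HoldPotGen} you must use the \emph{upper} bound $L_v = 4|c|_v^4$ in the denominator of the first term (legitimate, since the estimate there holds for any $L \geq \max\{2d, \Lip_v(\unipol)\}$, with $\kappa_v$ computed from that same $L$); with the true value of $L_v$, which may be as small as $4$, that term is roughly $24|c|_v^{3/2}\log|c|_v/L_v$ and need not be at most $1$, so this choice should be made explicit.
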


\begin{proof}
The archimedean case follows immediately from Corollary~\ref{cor:ArchHolUni}, noting that $\rho_{\unipol,v} = \Delta \loccanheight{v}{\unipol}$ from \eqref{eq:ArchPotLCH}.

Now suppose $v$ is a finite place of bad reduction, so we can apply Lemma~\ref{lem:HoldPotGen}. From \cite[Theorem~0.1]{RW},
$$
\Lip(\unipol) \leq \max \left \{ \frac{2}{|\mathrm{Res}(\unipol)|}, \frac{1}{|\mathrm{Res}(\unipol)|^2} \right \} <  4 |c|^{4}.
$$
We now compute the preimages $w_0,w_1$ of the Gauss point $\zeta_G$ under $\unipol$. By \cite[Proposition~7.6]{B}, if $\zeta(a,r)$ is a point of Type II or III, and $\unipol(D(a,r)) = D(b,s)$, then $\unipol(\zeta(a,r)) = \zeta(b,s)$. Moreover, from \cite[Theorem~3.15]{B}, if $\unipol(z) = \sum_{n \geq 0} c_n (z-a)^n$, then $\unipol(D(a,r)) = D(c_0,t)$, where $t = \max_{i \geq 1} |c_i| r^i$. For $a \in \C_v$, we can write
$$
\unipol(z) = z^2 + c = a^2+c + 2a(z-a)+(z-a)^2,
$$
and so
$$
\unipol(\zeta(a,r)) = \zeta \left( a^2+c, \max\{ |2a|r, r^2 \} \right).
$$
From this we can see that
$$
w_k = \zeta \left( (-1)^k (-c)^{\frac{1}{2}}, \min \left \{ 1, \frac{1}{|2||c|^{\frac{1}{2}}} \right \} \right), \qquad k=0,1,
$$
are the preimages of the Gauss point under $f_c$. Now, we have (see for example \cite[Exercise~6.22]{B})
\begin{equation*}
\diam( \zeta_G \vee w_k ) = \max \{ 1, \| T \|_{w_k} \} = |c|^{1/2},
\end{equation*}
noting that $|c| > 1$.
Thus
$$
d_{\H_v}(\zeta_G,w_k) = 2 \log |c|^{1/2} - \log \min \left \{ 1, \frac{1}{|2||c|^{\frac{1}{2}}} \right \}  \leq \frac{3}{2} \log |c|,
$$
and so
$$
\diam_G(w_k) \geq \frac{1}{|c|^{\frac{3}{2}}}.
$$
Substituting these values into the equations for $C_v$ and $\kappa_v$ in Lemma~\ref{lem:HoldPotGen}, and noting again that $|c| > 1$, completes the proof.
\end{proof}




\section{Truncation function} \label{sec:Trunc}

In this section, for each place $v$ of $K$ we define a suitable test function with which to apply Theorem~\ref{thm:QuantEquid}, and bound its Lipschitz constant and Dirichlet form.

\subsection{Archimedean case} Let $v$ be an archimedean place of $K$ and identify $\C_v$ with $\C$. Let $0 < \delta < 1$ and consider the  function $\phi_\delta(z)$ given as the extension to $\P^1(\C)$ of $\log \max \{ \delta, |z-\alpha| \}$. We have that $\phi_\delta$ is Lipschitz continuous on $\C$ (with respect to the Euclidean distance) with Lipschitz constant
\begin{equation} \label{eq:ArchLipOfTrunc}
\mathrm{Lip}(\phi_\delta) = \frac{1}{\delta}.
\end{equation}

On a chart $U_i$ with coordinates $x,y$, where $u_i(\alpha)=(a_i,b_i)$ we have
$$
\phi_\delta \circ \alpha_i^{-1}(x,y) = \log \max \{ \delta, \sqrt{(x-a_i)^2+(y-b_i)^2} \},
$$
which has weak partial derivatives
\begin{align*}
\frac{\partial \phi_\delta}{\partial x}(x,y) & = \I \left \{\sqrt{(x-a_i)^2+(y-b_i)^2} > \delta \right \} \frac{x-a_i}{(x-a_i)^2+(y-b_i)^2}, \\
\frac{\partial \phi_\delta}{\partial y}(x,y) & = \I \left \{\sqrt{(x-a_i)^2+(y-b_i)^2} > \delta \right \} \frac{y-b_i}{(x-a_i)^2+(y-b_i)^2},
\end{align*}
where $\I$ is an indicator function. 

Hence from \eqref{eq:DirichletForm}, we have (making a substitution to move $(a_i,b_i)$ to the origin)
\begin{align} \label{eq:ArchDirFormOfTrunc}
\langle \phi_\delta, \phi_\delta \rangle & = \int_{\overline D(0,1) \setminus \overline D(0,\delta)}  \frac{dx dy}{x^2+y^2} + \int_{D(0,1) \setminus \overline D(0,\delta)} \frac{dx dy}{x^2+y^2} \notag \\
& = 2 \int_0^{2 \pi} \int_\delta^1 \frac{dr d \theta}{r} = -4 \pi \log \delta.
\end{align}

\subsection{Non-archimedean case} When $v$ is a finite place of $K$, for $0 < \delta < 1$ we define $\phi_\delta : \BP(\C_v) \to \R$ by
$$
\phi_\delta(x) =  \log \max \{ \diam(x \vee \alpha), \delta \},
$$
which extends the function $\log \max \{ |x-\alpha|, \delta \}$ on $\P^1(\C_v)$ to $\BP(\C_v)$. Let $\xi := \zeta(\alpha, 1)$ and $\Lambda := [\zeta(\alpha,\delta), \xi]$. Then $\phi_\delta$ is locally constant outside of $\Lambda$. Indeed, it is easy to see that $\phi_\delta(x) = \phi_\delta(x \vee_\xi \zeta(\alpha,\delta))$. Moreover, since for $x \in \Lambda$ we have
$$
\phi_\delta(x) = \log \diam(x),
$$ 
$\phi_\delta$ is $\cC^1$ on $\Lambda$, with
$$
\partial \phi_\delta(x) = \lim_{\substack{y \to x \\ y \in [\xi,x]}} \frac{\phi_\delta(x)-\phi_\delta(y)}{d_{\H_v}(x,y)} = 1.
$$
Note that this implies $\phi_\delta$ is Lipschitz with 
\begin{equation} \label{eq:nonArchLipofTrunc}
\Lip(\phi_\delta) = 1. 
\end{equation}
We conclude that
\begin{align} \label{eq:nonArchDirFormOfTrunc}
\langle \phi_\delta, \phi_\delta \rangle & = \int_{\BP(\C_v)} (\partial \phi_\delta)^2 d\mu  = \int_\Lambda d\mu \notag = \mu(\Lambda) \\
& = d_{\H_v}(\zeta(\alpha,\delta),\xi) = - \log \delta.
\end{align}

Note that for all places $v$, if $\delta \leq \delta_v(\alpha)$, then $\phi_\delta$ agrees with $\log | \cdot - \alpha|$ at all preperiodic points of $\unipol$, and on the support of $\rho_{\unipol,v}$.

\section{Proof of Theorem~\ref{thm:main}} \label{sec:MainProof}

We can now prove the following explicit form of Theorem~\ref{thm:main}.

\begin{theorem} \label{thm:main2}
Let $K$ be a number field, and let $\pol \in K[z]$ be a polynomial of degree $d \geq 2$. Let $S$ be a finite set of places of $K$ containing all the archimedean ones, as well as all the places of bad reduction for $\pol$. Suppose that $\alpha \in \overline K$ is not preperiodic under $\pol$, and further that $\alpha$ is totally Fatou at all places $v \in M_K$. Let $C_v$ and $\kappa_v$ be constants as in \eqref{eq:defHCP} for the adelic measure associated to $\pol$, and let $C \geq 1$ and $\kappa > 0$ be constants satisfying $C \geq \sum_{v \in V} C_v$ and $\kappa \leq \min_{v \in V} \kappa_v$, where $V = M_K^\infty \cup M_{K,\mathrm{bad}}^{0,\pol}$. For each $v \in S$, write $\delta_v = \delta_{\pol,v}(\alpha)$. Then
\begin{align*}
|\relsupreper{\pol}{S}{\alpha}| \ll \max \Bigg \{ C, & \: \: \left( \frac{|V| \sum_{v \in S} |\log \delta_v|^{1/2}}{\kappa \hat h(\alpha)^2} \right)^2, \\
& \qquad \frac{|V|}{\kappa} \left( \frac{2}{\hat h(\alpha)} \left( \sum_{v \in M_K^\infty} \frac{1}{\delta_v} + |S \setminus M_K^\infty| \right) \right)^{\kappa} \Bigg \}.
\end{align*}
\end{theorem}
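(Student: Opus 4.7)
The plan is to argue by contradiction: assume $|\cP|$ exceeds the claimed bound, where $\cP := \relsupreper{\pol}{S}{\alpha}$, and derive $\canheight{\pol}(\alpha) = 0$, contradicting the non-preperiodicity of $\alpha$. Since $\cP$ is a $\Gal(\ov K/K)$-invariant union of Galois orbits of preperiodic points, and $\beta \neq \alpha$ for every $\beta \in \cP$, the product formula gives
$$0 = \Gamma := \frac{1}{|\cP|}\sum_{v \in M_K}\sum_{\beta \in \cP} \log|\beta - \alpha|_v.$$
The goal is to show that the inner average at each place is close to the local canonical height $\loccanheight{v}{\pol}(\alpha)$, so that summing reproduces $\canheight{\pol}(\alpha) > 0$.

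For $v \notin S$, $\pol$ has good reduction so every preperiodic $\beta$ satisfies $|\beta|_v \leq 1$; combined with $S$-integrality this forces $|\beta-\alpha|_v = \max\{1,|\alpha|_v\}$, yielding the exact identity $\frac{1}{|\cP|}\sum_\beta \log|\beta-\alpha|_v = \log^+|\alpha|_v = \loccanheight{v}{\pol}(\alpha)$. For $v \in S$ I use the truncation function $\phi_{\delta_v}$ of Section~\ref{sec:Trunc} with truncation parameter $\delta_v = \delta_{\pol,v}(\alpha)$. By the remark following \eqref{eq:defdeltav}, every $\beta \in \cP$ lies at $v$-adic distance at least $\delta_v$ from $\alpha$, and the support of $\rho_{\pol,v}$ is contained in the Julia set of $\pol$, which is disjoint from $D(\alpha,\delta_v)$. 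Consequently $\phi_{\delta_v}$ agrees with $\log|\,\cdot\,-\alpha|_v$ on both $\cP$ and $\mathrm{supp}(\rho_{\pol,v})$; in particular $\int \phi_{\delta_v}\,d\rho_{\pol,v} = \loccanheight{v}{\pol}(\alpha)$ by \eqref{eq:MahlerFormula}.

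The central step is to apply Theorem~\ref{thm:QuantEquid} to $\cP$ with test function $\phi_{\delta_v}$ for each $v \in S$. First one verifies that $h_\rho(\cP) = 0$: expanding \eqref{eq:adelHeight} bilinearly,
$$2h_\rho(\cP) = \sum_v ([\cP],[\cP])_v - 2\sum_v ([\cP],\rho_{\pol,v})_v + \sum_v (\rho_{\pol,v},\rho_{\pol,v})_v;$$
the first term vanishes by the product formula applied to the nonzero algebraic numbers $\beta-\gamma$ for distinct $\beta,\gamma \in \cP$, the second vanishes because $\canheight{\pol}(\beta)=0$ for every preperiodic $\beta$ (using \eqref{eq:MahlerFormula} again), and the third vanishes by the normalization forcing $h_\rho = \canheight{\pol}$ on single Galois orbits of preperiodic points. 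Feeding the Dirichlet form bounds ($-4\pi\log\delta_v$ at archimedean $v$, $-\log\delta_v$ otherwise) and Lipschitz constants ($1/\delta_v$ archimedean, $1$ otherwise) from Section~\ref{sec:Trunc} into Theorem~\ref{thm:QuantEquid} then produces explicit per-place error bounds.

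Finally, combining the exact identity at $v \notin S$ with the equidistribution errors at $v \in S$ and the vanishing of $\Gamma$ gives
$$\canheight{\pol}(\alpha) \leq \sum_{v \in S}\left|\frac{1}{|\cP|}\sum_{\beta \in \cP}\phi_{\delta_v}(\beta) - \int \phi_{\delta_v}\,d\rho_{\pol,v}\right|,$$
whose right-hand side splits into a Cauchy--Schwarz contribution of order $(|V|/(\kappa|\cP|))^{1/2}\sum_v|\log\delta_v|^{1/2}$ and a Lipschitz contribution of order $(|V|/(\kappa C|\cP|))^{1/\kappa}\bigl(\sum_{v \in M_K^\infty}\delta_v^{-1} + |S\setminus M_K^\infty|\bigr)$. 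Inverting this inequality for $|\cP|$ yields the three alternatives in the stated bound, the first being the precondition $|\cP| \gtrsim C$ of Theorem~\ref{thm:QuantEquid}. The principal technical obstacle is verifying $h_\rho(\cP) = 0$ for the entire Galois-invariant union $\cP$ rather than just for individual orbits; once this is in hand, the rest reduces to careful bookkeeping of the place-by-place error terms.
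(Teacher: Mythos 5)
Your proposal is correct and follows essentially the same route as the paper's proof: product-formula vanishing of $\Gamma$, exact evaluation of the local averages at places outside $S$ via good reduction and $S$-integrality, the truncation functions $\phi_{\delta_v}$ fed into Theorem~\ref{thm:QuantEquid} at places in $S$, and inversion of the resulting inequality (the paper does this last step via the Lambert $W$ function). The only difference is that you spell out the bilinear-expansion argument for $h_{\rho_{\pol}}(\cP)=0$, which the paper simply asserts from the coincidence $h_{\rho_{\pol}}=\canheight{\pol}$; your justification is valid.
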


\begin{proof}
First note that $\cP := \relsupreper{\pol}{S}{\alpha}$ is $\Gal(\overline K/K)$-invariant, and write
$$
\cP = \bigcup_{\beta \in Y} \Gal(\overline K/K) \cdot \beta
$$
as a union of Galois orbits. Define, for each $v \in M_K$,
\begin{align*}
\Gamma_v & := \frac{1}{|\cP|} \sum_{z \in \cP} \log |z-\alpha|_v \\
& = \frac{1}{|\cP|} \sum_{\beta \in Y} \sum_{\sigma : K(\beta)/K \hookrightarrow \overline K_v} \log |\sigma(\beta)-\alpha|_v,
\end{align*}
and set $\Gamma = \sum_{v \in M_K} \Gamma_v$. First consider a place $v \notin S$. By definition such a place is of good reduction, and so $\loccanheight{v}{\pol}(\alpha) = \log^+|\alpha|_v$. Note that as the points $z \in \cP$ are preperiodic, we have $0 = \loccanheight{v}{\pol}(z) = \log^+ |z|_v$, and so $|z|_v \leq 1$. Thus, if $|\alpha|_v > 1$, $\log |z-\alpha| = \log^+ |\alpha| = \loccanheight{v}{\pol}(\alpha)$ for all $z \in \cP$. If $|\alpha|_v \leq 1$, then for $z \in \cP$, $|z-\alpha|_v \leq \max \{ |z|_v, |\alpha|_v \} \leq 1$, and moreover the integrality hypothesis gives $|z-\alpha| \geq 1$. Hence again we have $\log |z-\alpha| = \loccanheight{v}{\pol}(\alpha)$, and so
$$
\Gamma_v = \loccanheight{v}{\pol}(\alpha)
$$
Therefore
\begin{equation} \label{eq:GamForm}
\Gamma = \canheight{\pol}(\alpha) + \sum_{v \in S} \left( \Gamma_v - \loccanheight{v}{\pol}(\alpha) \right).
\end{equation}
Since we have shown that $\Gamma_v=0$ for all but finitely many places, we may exchange sums in the following to obtain
\begin{align} \label{eq:GamZero}
\Gamma = \sum_{v \in M_K} \Gamma_v & = \frac{1}{|\cP|} \sum_{v \in M_K} \sum_{\beta \in Y} \sum_{\sigma : K(\beta)/K \hookrightarrow \overline K_v} \log |\sigma(\beta)-\alpha|_v \notag \\
& = \frac{1}{|\cP|} \sum_{\beta \in Y} \sum_{v \in M_K} \sum_{\sigma : K(\beta)/K \hookrightarrow \overline K_v} \log |\sigma(\beta)-\alpha|_v \notag \\
& = \frac{1}{|\cP|} \sum_{\beta \in Y} \sum_{w \in M_{K(\beta)}} \log |\beta-\alpha|_w = 0,
\end{align}
where the last equality follows from the product formula, noting that each $\beta \neq \alpha$, as $\alpha$ is not preperiodic for $\pol$ by assumption. Suppose $|\cP| \geq (6C\kappa)/(|V|+1)$. As $\cP$ consists of preperiodic points for $\pol$, $h_{\rho_{\pol}}(\cP) = 0$, so for each $v \in S$, applying Theorem~\ref{thm:QuantEquid} and \eqref{eq:MahlerFormula} to the truncation function $\phi_{\delta_v}$ (see \textsection \ref{sec:Trunc}) gives
$$
|\Gamma_v - \loccanheight{v}{\pol}(\alpha) | \leq \left( \frac{2(|V|+1)}{\kappa} \frac{\log |\cP|}{|\cP|} \langle \phi_{\delta_v}, \phi_{\delta_v} \rangle \right)^{1/2} + \Lip(\phi_{\delta_v}) \left( \frac{|V|+1}{2C\kappa |\cP|} \right)^{1/\kappa}.
$$
Plugging this into \eqref{eq:GamForm}, from \eqref{eq:ArchLipOfTrunc}, \eqref{eq:ArchDirFormOfTrunc}, \eqref{eq:nonArchLipofTrunc} and \eqref{eq:nonArchDirFormOfTrunc} we have
\begin{align*}
\Gamma & \geq \canheight{\pol}(\alpha) - \sum_{v \in M_K^\infty} \left[ \left( 8\pi \frac{|V|+1}{\kappa} \frac{\log |\cP|}{|\cP|} |\log \delta_v| \right)^{1/2} + \frac{1}{\delta_v} \left( \frac{|V|+1}{2C\kappa |\cP|} \right)^{1/\kappa} \right] \\
& \qquad \qquad \qquad - \sum_{v \in S \setminus M_K^\infty} \left[ \left( \frac{|V|+1}{\kappa} \frac{\log |\cP|}{|\cP|} |\log \delta_v| \right)^{1/2} + \left( \frac{|V|+1}{2C\kappa |\cP|} \right)^{1/\kappa} \right] \\
& \geq \canheight{\pol}(\alpha) - \left(8\pi \frac{|V|+1}{\kappa} \frac{\log |\cP|}{|\cP|} \right)^{1/2} \left( \sum_{v \in S} |\log \delta_v|^{1/2} \right) \\
& \qquad \qquad - \left[ \left( \sum_{v \in M_K^\infty} \frac{1}{\delta_v} \right) + |S \setminus M_K^\infty| \right] \left( \frac{|V|+1}{2C\kappa |\cP|} \right)^{1/\kappa}.
\end{align*}
Recall that the Lambert function (see \cite{C}), denoted $W(z)$ for $z \in \C$, is defined as the function that satisfies $W(z) e^{W(z)} = z$. If $z \in \R$, then $W(z)$ can take two possible real values for $-1/e \leq z \leq 0$. Let $W_{-1}$ denote the branch whose values satisfy $W(z) \leq -1$. For $-1/e \leq z \leq 0$ we have
$$
\frac{\log e^{-W_{-1}(z)}}{e^{-W_{-1}(z)}} = \frac{-W_{-1}(z)}{W_{-1}(z)/z} = -z,
$$
and so if $|\cP| \geq e^{-W_{-1}(z)}$, then $(\log |\cP|)/|\cP| \leq -z$.
By \cite[Theorem~1]{C},
$$
W_{-1}(z) > -1- \sqrt{2u}- u,
$$
where $u = -\log(-z)-1$. Thus, if we suppose
\begin{equation} \label{eq:PBound}
|\cP| > \max \left \{ \frac{6C\kappa}{|V|+1}, e^{1+\sqrt{2u}+u}, \frac{|V|+1}{2C\kappa} \left[ \frac{2}{\canheight{\pol}(\alpha)} \left( \left( \sum_{v \in M_K^\infty} \frac{1}{\delta_v} \right) + |S \setminus M_K^\infty | \right) \right]^\kappa \right \},
\end{equation}
where
$$
u = - \log \min \left \{ \frac{1}{e}, \frac{\kappa \canheight{\pol}(\alpha)}{32 \pi (|V|+1) \left( \sum_{v \in S} |\log \delta_v |^{1/2} \right)^2} \right \} - 1,
$$
then
$$
\Gamma > \canheight{\pol}(\alpha) - \frac{\canheight{\pol}(\alpha)}{2} - \frac{\canheight{\pol}(\alpha)}{2} = 0,
$$
which contradicts \eqref{eq:GamZero}. The result follows from noting that $C \geq 1$, $\kappa \leq 1$ and $e^{1+\sqrt{2u}+u} \ll e^{2u}$.
\end{proof}

\section{Bounds on the size of preperiodic points at archimedean places} \label{sec:arch}

Let $v$ be an archimedean place of $K$, identify $\C_v$ with $\C$, writing $| \cdot |$ for $| \cdot |_v$, and $\cJ(\unipol)$ and $\cK(\unipol)$ respectively for the $v$-adic Julia set and filled Julia set of $\unipol$.

\subsection{Parameters outside the Mandelbrot set}

Suppose $c$ lies outside the $v$-adic Mandelbrot set. Then the Julia set $\cJ(\unipol)$ of $\unipol$ is a Cantor set which coincides with the filled Julia set $\cK(\unipol)$. As such, all preperiodic points for $\unipol$ lie in $\cJ(\unipol)$, and so the quantity $\delta_v = \delta_{\unipol,v}(0)$ defined in \eqref{eq:defdeltav} is precisely the distance $\mathrm{dist}(0,\cJ(\unipol))$ from 0 to the Julia set. 

We call $R$ an \emph{escape radius} for $\unipol(z)=z^d+c$ if for all $z \in \C$ with $|z| > R$, $|\unipol^n(z)| \to \infty$. A trivial escape radius is $R=1+|c|$, but there are better estimates. Namely, we have the following \cite[Corollary~3.3]{S}.

\begin{lemma}
Define
$$
Q_c(R) = R^2-R-|c|,
$$
and let $R_c$ be the largest real fixed point of $Q_c$. Then 
$$
R_c = \frac{1}{2}+\sqrt{\frac{1}{4}+|c|}
$$
is an escape radius for $\unipol$.
\end{lemma}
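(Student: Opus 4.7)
The plan is straightforward: exploit the fact that $R_c$ is by definition the larger root of $Q_c(R) = R^2 - R - |c|$, and leverage the triangle inequality $|\unipol(z)| \geq |z|^2 - |c|$ to produce a concrete lower bound on how fast orbits starting outside the disk of radius $R_c$ escape.

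First I would record the shape of $Q_c$. Since $Q_c$ is an upward parabola with vertex at $R = 1/2$, and $R_c = \frac{1}{2}+\sqrt{1/4+|c|} \geq 1 > 1/2$, the function $Q_c$ is strictly increasing on $[R_c,\infty)$ and satisfies $Q_c(R) > 0$ for all $R > R_c$, while $Q_c(R_c)=0$. This is the only analytic input needed.

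Next I would establish the one-step estimate: for any $z \in \C$ with $|z| > R_c$, the triangle inequality gives
\[
|\unipol(z)| = |z^2+c| \geq |z|^2 - |c| = |z| + Q_c(|z|) > |z| > R_c,
\]
where in the middle equality I rewrote $|z|^2 - |c|$ using the definition of $Q_c$. Consequently the orbit $z_n := \unipol^n(z)$ stays in the region $\{|w| > R_c\}$ and is strictly increasing in modulus.

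To upgrade from monotonicity to divergence, I would iterate the previous inequality: since $|z_n| \geq |z_0|$ and $Q_c$ is increasing on $[R_c,\infty)$, we have $Q_c(|z_n|) \geq Q_c(|z_0|) > 0$, whence $|z_{n+1}| \geq |z_n| + Q_c(|z_0|)$, and therefore
\[
|\unipol^n(z)| \geq |z| + n\, Q_c(|z|) \longrightarrow \infty,
\]
which shows $R_c$ is an escape radius. There is no real obstacle here; the only thing to verify carefully is that $R_c \geq 1/2$ so that $Q_c$ is increasing on the relevant range, which is immediate from the closed form of $R_c$.
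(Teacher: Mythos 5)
Your argument is correct and complete: the identity $|z|^2-|c| = |z| + Q_c(|z|)$ plus monotonicity of $Q_c$ on $[1/2,\infty)$ gives the linear-growth lower bound $|\unipol^n(z)| \geq |z| + n\,Q_c(|z|) \to \infty$, which is exactly the standard proof; the paper itself does not prove this lemma but simply cites it from Stroh's thesis \cite[Corollary~3.3]{S}. One small point worth noting: the statement's phrase ``fixed point of $Q_c$'' really means the fixed point of $R \mapsto R^2 - |c|$, i.e.\ the largest \emph{root} of $Q_c$, which is how you (correctly) interpreted it and is consistent with the displayed formula for $R_c$.
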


Note that $|c| > R_c$ whenever $|c| > 2$.

When $|c|$ is sufficiently large, it is easy to show that for a point $z$ close to 0, $\unipol(z)$ lies outside an escape radius of $\unipol$, so $z$ cannot be preperiodic for $\unipol$, and hence does not lie inside the Julia set. In particular we have the following.

\begin{lemma} \label{lem:JuliaSetDistance}
Suppose $|c| > 2$. Then the distance from 0 to $\cJ(\unipol)$
$$
\delta_v = \mathrm{dist}(0,\cJ(\unipol)) \geq \left( |c|-R_c \right)^{\frac{1}{2}}.
$$
We also have
$$
\delta_v = \mathrm{dist}(0, \cJ(\unipol)) \geq \frac{1}{2}
$$
for $c \in [1,\infty)$. In particular, this means that for all $c \in \Z$ such that $0$ is not preperiodic for $\unipol$, we have
$$
- \log \delta_v \leq \log 2.
$$
\end{lemma}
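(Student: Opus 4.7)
The overall plan is to use forward invariance of the filled Julia set $\cK(\unipol)$ under $\unipol$, noting that in both cases $c$ lies outside the Mandelbrot set, so $\cJ(\unipol)=\cK(\unipol)$ and $\delta_v=\mathrm{dist}(0,\cK(\unipol))$, as recalled just before the statement. For the first inequality, the hypothesis $|c|>2$ places $c$ outside the Mandelbrot set (contained in $\overline D(0,2)$), so any $z\in\cK(\unipol)$ satisfies $|z^2+c|=|\unipol(z)|\le R_c$; applying the reverse triangle inequality $\bigl||z|^2-|c|\bigr|\le|z^2+c|$ then yields $|z|^2\ge |c|-R_c$ in both cases (directly when $|z|^2\le|c|$, and via $|z|^2\ge|c|\ge|c|-R_c$ otherwise). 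Taking the infimum over $z\in\cK(\unipol)$ would give the first claim.

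For the second inequality, let $c\in[1,\infty)$, which lies outside the real trace $[-2,1/4]$ of the Mandelbrot set. I would argue by contradiction: suppose $z_0\in\cK(\unipol)$ with $|z_0|<1/2$, and set $w_n:=\unipol^n(z_0)$. Since $|w_1-c|=|z_0|^2<1/4$, one has $\mathrm{Re}(w_1)\ge c-1/4\ge 3/4$ and $|\mathrm{Im}(w_1)|<1/4$. Iterating using the identities $\mathrm{Re}(w^2)=\mathrm{Re}(w)^2-\mathrm{Im}(w)^2$ and $\mathrm{Im}(w^2)=2\,\mathrm{Re}(w)\mathrm{Im}(w)$ gives
\[
\mathrm{Re}(w_2)\ge (c-\tfrac14)^2 - \tfrac{1}{16} + c = c^2 + \tfrac{c}{2}, \qquad |\mathrm{Im}(w_2)|\le \tfrac{c}{2}+\tfrac{1}{8},
\]
and then $\mathrm{Re}(w_3)\ge (c^2+c/2)^2 - (c/2+1/8)^2 + c = c^4+c^3+\tfrac{7c}{8}-\tfrac{1}{64}$. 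A direct polynomial check shows this exceeds $R_c=(1+\sqrt{1+4c})/2$ for every $c\ge 1$ (at $c=1$, $183/64\approx 2.86>R_1\approx 1.618$, and the gap widens with $c$). Hence $|w_3|>R_c$, so $z_0$ escapes, contradicting $z_0\in\cK(\unipol)$.

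For the integer conclusion, take $c\in\Z$ with $c\ne 0,-1,-2$: if $|c|\ge 3$, the equivalent inequality $2|c|-3/2\ge\sqrt{1+4|c|}$ holds at $|c|=3$ (where $9/2>\sqrt{13}$) and $|c|-R_c$ is strictly increasing in $|c|$, so the first inequality gives $(|c|-R_c)^{1/2}\ge 1/2$; for $c\in\{1,2\}$ the second inequality applies directly. The principal obstacle is the second inequality itself: one iterate merely sends $D(0,1/2)$ into $D(c,1/4)$, still inside the escape-radius disk, and even two iterates leave $\mathrm{Re}(w_2)\ge 3/2$ below $R_1\approx 1.618$ at $c=1$. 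One is therefore forced to iterate three times and verify a cumulative polynomial inequality in $c$ uniformly on $[1,\infty)$.
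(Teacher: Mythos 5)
Your proof is correct and follows essentially the same route as the paper: the first bound is the paper's escape-radius/reverse-triangle computation (stated in contrapositive form), and the integer case uses the same split between $|c|\geq 3$ and $c\in\{1,2\}$. For $c\in[1,\infty)$ you track real and imaginary parts of three iterates of a point in $D(0,1/2)$ until the escape radius $R_c$ is exceeded, whereas the paper bounds the argument of $\unipol(z)$ and iterates a step or two longer; this is only a difference in bookkeeping, and your explicit polynomial check is, if anything, tidier than the paper's sketch.
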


\begin{proof}
Suppose $z \in \C$ with $|z| < (|c|-R_c)^{\frac{1}{2}}$. Then
$$
|\unipol(z)| = |z^2+c| \geq |c|-|z|^2 > R_c,
$$
whence $z \notin \cJ(\unipol)$.

Now, suppose $c \in [1, \infty)$ note that the image of $\D(0,1/2)$ under $\unipol$ is $\D(c,1/2^d)$, so for any $z \in \D(0,1/2)$, $|\mathrm{Arg}(\unipol(z))| \leq \tan^{-1}(1/4)$. Hence $|\mathrm{Arg}(\unipol(z))^2| \leq 2 \tan^{-1}(4^{-1}) < \pi/4$, and so 
$$
|\unipol^2(z)| > |c| + \frac{|\unipol(z)|^2}{\sqrt{2}} \geq |c| + \frac{1-4^{-1}}{\sqrt{2}}.
$$
It is easy then to check (just applying the reverse triangle inequality) that certainly $\unipol^4(z) > R_{d,c}$, and so $z \notin \cJ(\unipol)$. 
\end{proof}



On the other hand, when $|c|$ is not large, and especially when $c$ lies close to the boundary of the Mandelbrot set, we cannot argue as above, as it could take many iterations under $\unipol$ for a point close to 0 to get outside an escape radius. This information is contained, however, in the local canonical height of 0 under $\unipol$. Indeed, we can still obtain a lower bound for $\mathrm{dist}(0,\cJ(\unipol))$ in terms of $\loccanheight{v}{\unipol}(0)$ by using the H\"{o}lder continuity of $\loccanheight{v}{\unipol}$.

\begin{prop} \label{prop:Kosek}
Let $v \in M_K^\infty$ and suppose $c \in K$ lies outside the $v$-adic Mandelbrot set. Then we have
\begin{align*}
\delta_v & = \mathrm{dist}(0, \cJ(\unipol)) \\
& \geq \left( \frac{\loccanheight{v}{\unipol}(0)}{2 \log 6 + \log^+ |c|_v} \right)^{\frac{2 \log 6 + \log^+|c|_v}{2 \log 2}}.
\end{align*}
\end{prop}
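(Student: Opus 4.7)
My plan is to exploit the fact that, for $c$ outside the $v$-adic Mandelbrot set, the Julia set and the filled Julia set of $\unipol$ coincide: $\cJ(\unipol) = \cK(\unipol)$. In particular, every preperiodic point of $\unipol$ lies in $\cJ(\unipol)$, so
$$
\delta_v = \mathrm{dist}(0, \cJ(\unipol)) = \mathrm{dist}(0, \cK(\unipol)).
$$
Moreover, the local canonical height $\loccanheight{v}{\unipol}$ coincides with the Green function of $\cK(\unipol)$ and hence vanishes identically on $\cK(\unipol)$, while $\loccanheight{v}{\unipol}(0) > 0$ since $c$ lies outside the Mandelbrot set.

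The next step is to compare $\loccanheight{v}{\unipol}(0)$ to $\loccanheight{v}{\unipol}(z_0) = 0$, where $z_0 \in \cJ(\unipol)$ is the nearest point to $0$, via the H\"{o}lder continuity of $\loccanheight{v}{\unipol}$. The naive route of invoking Corollary~\ref{cor:ArchHolUni} directly would yield the correct exponent but a denominator $4 \log 6 + 2 \log^+|c|_v$, off by a factor of two from the claim. To remove this extra factor, I would instead apply the sharper intermediate estimate \eqref{eq:HolJulBd} from the proof of Lemma~\ref{lem:HolArch}, which is specifically tailored to the situation $z \in \C \setminus \cK$ with $z_0 \in \cK$ being the nearest point. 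With $\pol = \unipol$, $\varepsilon = 1$, $d=2$, and $F$ the disk of radius $2 \max\{1,|c|_v^{1/2}\}$ as chosen in the proof of Corollary~\ref{cor:ArchHolUni}, \eqref{eq:HolJulBd} reads
$$
\loccanheight{v}{\unipol}(0) \;\leq\; \frac{C_v}{2}\, \mathrm{dist}(0,\cK(\unipol))^{\kappa_v} \;=\; (2 \log 6 + \log^+|c|_v)\,\delta_v^{\kappa_v},
$$
with $\kappa_v = 2 \log 2 / (2 \log 6 + \log^+|c|_v)$.

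Rearranging gives
$$
\delta_v \;\geq\; \left( \frac{\loccanheight{v}{\unipol}(0)}{2 \log 6 + \log^+|c|_v} \right)^{1/\kappa_v} \;=\; \left( \frac{\loccanheight{v}{\unipol}(0)}{2 \log 6 + \log^+|c|_v} \right)^{\frac{2 \log 6 + \log^+|c|_v}{2 \log 2}},
$$
which is the claimed bound. The one subtle point is the use of the one-sided inequality \eqref{eq:HolJulBd} rather than the symmetric H\"{o}lder estimate of Corollary~\ref{cor:ArchHolUni}; everything else is a direct substitution.
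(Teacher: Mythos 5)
Your proposal is correct and is exactly the paper's argument: the paper's proof also consists of noting $\delta_v = \mathrm{dist}(0,\cJ(\unipol))$ for $c$ outside the Mandelbrot set and rearranging the one-sided estimate \eqref{eq:HolJulBd} at $z=0$ with the constants of Corollary~\ref{cor:ArchHolUni}, so that the factor $C_v/2 = 2\log 6 + \log^+|c|_v$ appears rather than $C_v$. Your observation about why the symmetric H\"older bound would lose a factor of two is precisely the point the paper's one-line proof implicitly relies on.
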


\begin{proof}
This follows immediately from rearranging \eqref{eq:HolJulBd} with $z=0$ and the constants from Corollary~\ref{cor:ArchHolUni}.
\end{proof}

We now reinterpret Lemma~\ref{lem:JuliaSetDistance} and Proposition~\ref{prop:Kosek} in order to give a bound of a form amenable to the proof of Theorem~\ref{thm:UniformMain}.

\begin{corollary} \label{cor:ArchOutParamDeltavBound}
Let $\varepsilon > 0$, $v \in M_K^\infty$ and let $c \in K$ be such that $\loccanheight{v}{\unipol}(0) \geq \varepsilon$. Then
$$
-\log \delta_v \leq A_{\infty,1},
$$
where
$$
A_{\infty,1} = \begin{cases} \max \left \{0, -\frac{1}{2} \log \left(e^{2\left(\varepsilon-\log 2 \right)}- R_{e^{2\left(\varepsilon-\log 2 \right)}} \right) \right\} & \varepsilon > \frac{3 \log 2}{2}, \\
\frac{\log 12 + \varepsilon}{\log 2} \log \left( \frac{\log 48 + 2 \varepsilon}{\varepsilon} \right) & \varepsilon > 0. \end{cases}
$$
Note that we separate the case $\varepsilon > \frac{3 \log 2}{2}$ because it ensures the parameter $c$ satisfies $|c| > 2$, allowing us to apply Lemma~\ref{lem:JuliaSetDistance}. This produces a better bound than the latter, general case, for which we remark that $A_{\infty, 1} \ll -\log \varepsilon$ as $\varepsilon \to 0$.
\end{corollary}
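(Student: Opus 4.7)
The strategy is to treat the two cases separately, in each using the hypothesis $\loccanheight{v}{\unipol}(0) \geq \varepsilon$ to convert the lower bounds on $\delta_v$ from Lemma~\ref{lem:JuliaSetDistance} and Proposition~\ref{prop:Kosek}, which naturally involve $|c|_v$, into bounds in terms of $\varepsilon$ alone.

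For the first case ($\varepsilon > \frac{3\log 2}{2}$), I would first establish that $|c|_v > 2$. Assuming $|c|_v \geq 1$, Lemma~\ref{lem:LocalHeightProps}(a) applied at $z = 0$ gives
$$
\varepsilon \leq \loccanheight{v}{\unipol}(0) \leq \log 2 + \tfrac{1}{2}\log|c|_v,
$$
whence $|c|_v \geq e^{2(\varepsilon - \log 2)} > e^{\log 2} = 2$. The subcase $|c|_v < 1$ would be excluded by applying Lemma~\ref{lem:LocalHeightProps}(a) to an iterate $\unipol^n(0)$ with $|\unipol^n(0)|_v \geq 1$ and using the functional equation $\loccanheight{v}{\unipol}(0) = 2^{-n}\loccanheight{v}{\unipol}(\unipol^n(0))$, which forces $\loccanheight{v}{\unipol}(0)$ to be much smaller than $\tfrac{3\log 2}{2}$. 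With $|c|_v > 2$ in hand, Lemma~\ref{lem:JuliaSetDistance} yields $\delta_v \geq (|c|_v - R_{|c|_v})^{1/2}$. The map $x \mapsto x - R_x$ has derivative $1 - (2\sqrt{1/4 + x})^{-1}$, which is positive for $x > 0$, so substituting the lower bound for $|c|_v$ gives
$$
-\log\delta_v \leq -\tfrac{1}{2}\log\bigl(e^{2(\varepsilon-\log 2)} - R_{e^{2(\varepsilon-\log 2)}}\bigr),
$$
and taking the maximum with $0$ handles the case $\delta_v = 1$.

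For the general case ($\varepsilon > 0$), I would apply Proposition~\ref{prop:Kosek} directly to obtain
$$
-\log \delta_v \leq \frac{2\log 6 + \log^+|c|_v}{2\log 2} \log\!\left(\frac{2\log 6 + \log^+|c|_v}{\loccanheight{v}{\unipol}(0)}\right),
$$
then eliminate the $|c|_v$ dependence using Lemma~\ref{lem:LocalHeightProps}(b) applied at $z = c$. Combined with the functional equation $\loccanheight{v}{\unipol}(c) = \loccanheight{v}{\unipol}(\unipol(0)) = 2\loccanheight{v}{\unipol}(0)$, this gives $\log|c|_v \leq 2\loccanheight{v}{\unipol}(0) + \log 4$, and hence $\log^+|c|_v \leq 2\loccanheight{v}{\unipol}(0) + \log 4$. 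Substituting this and then using $\loccanheight{v}{\unipol}(0) \geq \varepsilon$ in the logarithmic factor reduces the bound to the claimed form of $A_{\infty,1}$.

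The main subtlety lies in the second case: the expression resulting from the above substitution is not monotonically decreasing in $\loccanheight{v}{\unipol}(0)$ across the whole range $[\varepsilon, \infty)$, so naive substitution of $\loccanheight{v}{\unipol}(0) = \varepsilon$ requires justification. The key point is that large values of $\loccanheight{v}{\unipol}(0)$ force large $|c|_v$ via Lemma~\ref{lem:LocalHeightProps}(b), placing us in the Case~1 regime where the Lemma~\ref{lem:JuliaSetDistance} bound is strictly sharper and comfortably absorbed by $A_{\infty,1}$. The stated asymptotic $A_{\infty,1} \ll -\log\varepsilon$ as $\varepsilon \to 0^+$ then follows from the dominance of the logarithmic factor $\log((\log 48 + 2\varepsilon)/\varepsilon) \sim -\log\varepsilon$.
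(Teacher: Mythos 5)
Your two-case strategy is essentially the paper's: for $\varepsilon > \tfrac{3\log 2}{2}$ you deduce $|c|_v > 2$ from Lemma~\ref{lem:LocalHeightProps}(a), apply Lemma~\ref{lem:JuliaSetDistance}, and use the monotonicity of $x - R_x$, exactly as in the paper (the paper rules out $|c|_v < 1$ by the direct induction $|\unipol^n(0)|_v < 2^{2^{n-1}-1}$, forcing $\loccanheight{v}{\unipol}(0) \le \tfrac{\log 2}{2}$; your iterate argument achieves the same thing). In the general case you likewise combine Proposition~\ref{prop:Kosek} with Lemma~\ref{lem:LocalHeightProps}(b) at $z=c$, which is also the paper's route. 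One bookkeeping remark: substituting $\log^+|c|_v \le \log 4 + 2\varepsilon$ into Proposition~\ref{prop:Kosek} produces $2\log 6 + \log 4 + 2\varepsilon = \log 144 + 2\varepsilon$ inside the logarithm, not $\log 48 + 2\varepsilon$, so your computation lands on a slightly larger constant than the stated $A_{\infty,1}$; this discrepancy is inherited from the paper, not introduced by you.

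Where you go beyond the paper is the monotonicity issue, and there your patch has the right idea but is only asserted. Two things need fixing. First, the implication ``large $\loccanheight{v}{\unipol}(0)$ forces large $|c|_v$'' comes from Lemma~\ref{lem:LocalHeightProps}(a) (which gives $|c|_v \ge e^{2(\loccanheight{v}{\unipol}(0)-\log 2)}$ once $|c|_v \ge 1$), not from part (b), which only bounds $|c|_v$ from above. Second, ``comfortably absorbed'' needs a quantitative threshold: Lemma~\ref{lem:JuliaSetDistance} is available only for $|c|_v > 2$ and its bound degenerates as $|c|_v \to 2^+$, so you should fix $h_0$ with $e^{2(h_0-\log 2)} - R_{e^{2(h_0-\log 2)}} \ge 1$ (for instance $h_0 = \log 2 + \tfrac12\log\bigl(2+\sqrt{2}\bigr) \approx 1.31$), verify by elementary calculus that $h \mapsto \frac{\log 12 + h}{\log 2}\log\bigl(\frac{\log 144 + 2h}{h}\bigr)$ is decreasing on $(0,h_0]$ (so substitution of $\varepsilon$ is legitimate when $\loccanheight{v}{\unipol}(0) \le h_0$), and observe that for $\loccanheight{v}{\unipol}(0) \ge h_0$ one gets $\delta_v = 1$, hence $-\log\delta_v = 0$. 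With these verifications your argument is complete; without them the second case remains a sketch. For comparison, the paper's own proof simply writes $\log^+|c|_v \le \log 4 + 2\varepsilon$, which uses the hypothesis $\loccanheight{v}{\unipol}(0) \ge \varepsilon$ in the wrong direction, so the subtlety you flagged is a genuine gap in the published argument, and your proposed repair (once carried out as above) is the correct way to close it.
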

 
\begin{proof}
Suppose $\varepsilon > \frac{3 \log 2}{2}$. We must have $|c| \geq 1$ as if $|c| < 1$, an easy induction gives
$$
|\unipol^n(0)| < 2^{2^{n-1}-1},
$$
and so $\loccanheight{v}{\unipol}(0) < (\log 2)/2$. Hence, by Lemma~\ref{lem:LocalHeightProps}~(a), we have
$$
\loccanheight{v}{\unipol}(0) \leq \log 2 + \frac{1}{2} \log |c|,
$$
and so
$$
|c| > e^{2\left(\loccanheight{v}{\unipol}(0)-\log 2 \right)} = 2.
$$
Hence, by Lemma~\ref{lem:JuliaSetDistance},
$$
\delta_v \geq (|c| - R_c)^{1/2}.
$$
Note that $|c|-R_c$ increases with $|c|$ for $|c| \geq 1$. Thus we conclude this case by substituting $|c| > e^{2\left(\varepsilon-\log 2\right)}$.

Now, for arbitrary $\varepsilon$, from Lemma~\ref{lem:LocalHeightProps}~(b), we have
$$
2 \loccanheight{v}{\unipol}(0) = \loccanheight{v}{\unipol}(c) = \max \{ \loccanheight{v}{\unipol}(c), \loccanheight{v}{\unipol}(0) \} \geq \log |c| - \log 4,
$$
and so
\begin{equation} \label{eq:ska1}
\log^+ |c| \leq \log 4 + 2\varepsilon.
\end{equation}
The result follows from plugging \eqref{eq:ska1} into Proposition~\ref{prop:Kosek} and taking logarithms.
\end{proof}

\subsection{Parameters inside the hyperbolic locus}

Now suppose $c$ lies in a period $t$ hyperbolic component of the $v$-adic Mandelbrot set. That is, $\unipol$ has an attracting cycle of period $t$. Let $U$ be the component of the immediate basin of this cycle containing 0, let $\alpha$ be the element of the cycle in this component, and let $\phi : U \to \D$ be the uniformizing map with $\phi(\alpha)=0$, appropriately normalized so that $g:= \phi \circ \unipol^t \circ \phi^{-1}$ is the Blaschke product
$$
g(z) = z \frac{z+\lambda}{1+\bar \lambda z},
$$ 
where $\lambda = (\unipol^t)'(\alpha) \neq 0$ is the multiplier of $\alpha$ (see \cite[\textsection \textsection 25.3]{L}).

Note that $\phi(0)$ is a critical point of $g$ lying in $\D$, and so
\begin{equation} \label{eq:phi}
\phi(0) = \frac{1-\sqrt{1-|\lambda|^2}}{\bar \lambda} = \frac{1-\sqrt{1-|\lambda|^2}}{|\lambda|^2} \lambda.
\end{equation}
Suppose that $z \in \phi^{-1}(\D(\phi(0),\delta)) \subset U$ is preperiodic for $\unipol$, where
\begin{equation} \label{eq:delta}
\delta := |\lambda - \phi(0)| = \frac{|\lambda|^2-1+\sqrt{1-|\lambda|^2}}{|\lambda|}.
\end{equation}
Then $\unipol^{tk}(z)=\alpha$ for some $k \geq 1$, and so $g^k(\phi(z)) = \phi(\unipol^{tk}(z)) = \phi(\alpha)=0$. By the Schwarz lemma, $|g(w)| < |w|$ for all $w \in \D \setminus \{0 \}$, and so, since the only preimages of 0 under $g$ are $0$ and $-\lambda$, the disk centred at 0 of radius $|\lambda|$ contains no iterated preimages of $0$ under $g$ other than $0$ itself. But we have
\begin{align*}
|\phi(z)| & = |\phi(z)-\phi(0)+\phi(0)| \leq |\phi(z)-\phi(0)| + |\phi(0)| \\
& < \delta + |\phi(0)| = |\lambda|,
\end{align*}
a contradiction. Hence $\phi^{-1} (\D(\phi(0),\delta)) \subset U$ contains no preperiodic points under $\unipol$.

Let $\eta : \D \to \D$ be given by $\eta(z) = \phi(d(\alpha,\partial U)z+\alpha)$. Then $\eta(0)=0$, so by the Schwarz lemma,
$$
1 \geq |\eta'(0)| = |d(\alpha, \partial U) \phi'(\alpha)|,
$$
and hence by the inverse function theroem,
\begin{equation} \label{eq:SchwazDistBound}
\left|(\phi^{-1})'(0) \right| \geq d(\alpha, \partial U).
\end{equation}

Recall the Koebe distortion theorem \cite[\textsection 1.2]{Po}.

\begin{theorem}
Let $F : \D \to \C$ be univalent with $F(0)=0$ and $F'(0)=1$. Then for all $z \in \D$,
$$
\frac{|z|}{(1+|z|)^2} \leq |F(z)| \leq \frac{|z|}{(1-|z|)^2}
$$
and
$$
\frac{1-|z|}{(1+|z|)^3} \leq |F'(z)| \leq \frac{1+|z|}{(1-|z|)^3}.
$$
\end{theorem}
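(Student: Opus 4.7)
The plan is to derive both estimates from Bieberbach's coefficient bound
\[
|a_2| \leq 2 \qquad \text{for } F(z) = z + a_2 z^2 + a_3 z^3 + \cdots \in \mathcal{S},
\]
where $\mathcal{S}$ denotes the Schlicht class of univalent functions on $\D$ normalised by $F(0) = 0$ and $F'(0) = 1$. Bieberbach's inequality itself is not a triviality: its standard proof proceeds via the area theorem applied to the odd square root transform $\sqrt{F(z^2)}/z \in \mathcal{S}$, composed with inversion at infinity. This is in fact the only genuinely non-routine step in the entire argument, and is the main conceptual obstacle.

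With $|a_2| \leq 2$ in hand, for each $\zeta \in \D$ I would apply the inequality to the \emph{Koebe transform}
\[
H_\zeta(w) := \frac{F\!\left(\dfrac{w+\zeta}{1+\bar\zeta w}\right) - F(\zeta)}{(1-|\zeta|^2)\,F'(\zeta)},
\]
which is again in $\mathcal{S}$ since it is a disk automorphism composed with $F$ followed by an affine normalisation. A direct computation shows that its second Taylor coefficient equals $\tfrac{1}{2}(1-|\zeta|^2)F''(\zeta)/F'(\zeta) - \bar\zeta$, so Bieberbach yields the pointwise distortion inequality
\[
\left| \frac{(1-|\zeta|^2) F''(\zeta)}{F'(\zeta)} - 2\bar\zeta \right| \leq 4 \qquad (\zeta \in \D).
\]
Multiplying by $\zeta/(1-|\zeta|^2)$, taking real parts, and setting $\zeta = re^{i\theta}$ produces
\[
\frac{2r^2 - 4r}{1 - r^2} \;\leq\; \Re\!\left(\frac{\zeta F''(\zeta)}{F'(\zeta)}\right) \;\leq\; \frac{2r^2 + 4r}{1 - r^2}.
\]

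Since $\Re(\zeta F''(\zeta)/F'(\zeta)) = r\,\partial_r \log |F'(re^{i\theta})|$, dividing by $r$ and integrating from $0$ to $|z|$ along a radius gives, after a partial-fractions integration, the two-sided derivative bound
\[
\frac{1-|z|}{(1+|z|)^3} \;\leq\; |F'(z)| \;\leq\; \frac{1+|z|}{(1-|z|)^3}.
\]
The upper growth estimate $|F(z)| \leq |z|/(1-|z|)^2$ then follows immediately by integrating the upper derivative bound along the radial segment from $0$ to $z$, via a short partial-fractions calculation.

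The lower growth bound $|F(z)| \geq |z|/(1+|z|)^2$ is the only slightly subtler step. If the straight segment from $0$ to $F(z)$ happens to lie in $F(\D)$, pull it back under $F$ to a curve $\gamma$ joining $0$ to $z$ in $\D$ and integrate $|F'|^{-1}$ along $\gamma$ using the lower derivative bound. Otherwise invoke Koebe's one-quarter theorem, which is itself a direct corollary of Bieberbach: if $w \notin F(\D)$, then $F(z)/(1 - F(z)/w) \in \mathcal{S}$ has second coefficient $a_2 + 1/w$, and two applications of $|a_2| \leq 2$ force $|w| \geq 1/4$. Combining this with the monotonicity of $|z|/(1+|z|)^2$ on $[0,1)$ handles the remaining case and completes the proof.
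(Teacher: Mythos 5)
Your proposal is correct, and it is essentially the paper's own route: the paper does not prove this statement but simply quotes the classical Koebe distortion/growth theorem from Pommerenke (\textsection 1.2), where the proof is exactly the one you give --- area theorem, Bieberbach's bound $|a_2|\leq 2$, the Koebe transform $H_\zeta$ yielding the differential inequality for $\Re\left(\zeta F''(\zeta)/F'(\zeta)\right)$, radial integration for the two-sided derivative bound, and then integration (together with the one-quarter theorem for the lower growth estimate). The details you sketch check out: the second coefficient of $H_\zeta$ is indeed $\tfrac12(1-|\zeta|^2)F''(\zeta)/F'(\zeta)-\bar\zeta$, the partial-fraction integrals give the stated bounds, and your case split for the lower growth bound (pulling back the segment when it lies in $F(\D)$, otherwise using $|w|\geq 1/4$ for an omitted point on the segment together with $|z|/(1+|z|)^2<1/4$) is sound.
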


Applying the Koebe distortion theorem to
$$
F(z):=\frac{\phi^{-1}(z)-\alpha}{(\phi^{-1})'(0)}
$$
at $z=\phi(0)$, we get
\begin{equation} \label{eq:KobeDist}
\left| (\phi^{-1})'(\phi(0)) \right| = \left| (\phi^{-1})'(0) F'(\phi(0)) \right| \geq \frac{1-|\phi(0)|}{(1+|\phi(0)|)^3} \left| (\phi^{-1})'(0) \right|.
\end{equation}
Let $\psi : \D \to \D(\phi(0),\delta)$ be given by
$$
\psi(z) = \delta z + \phi(0)
$$
so that by the Koebe 1/4-theorem, $\phi^{-1}(\D(\phi(0),\delta))$ contains a disk about $\phi^{-1} \circ \psi(0) = 0$ of radius
\begin{align} \label{eq:boundo}
\left| \frac{(\phi^{-1} \circ \psi)'(0)}{4} \right| & = \frac{\delta}{4} \left| (\phi^{-1})'(\phi(0)) \right| \notag \\
& \geq \frac{\delta(1-|\phi(0)|)}{4(1+|\phi(0)|)^3} \left| (\phi^{-1})'(0) \right|  \notag \\
& = \frac{|\lambda| \left( |\lambda|^2 - 1 + \sqrt{1-|\lambda|^2} \right) \left( |\lambda| - 1 + \sqrt{1- \lambda|^2} \right)}{4 \left( |\lambda| + 1 - \sqrt{1-|\lambda|^2} \right)^3} \left| (\phi^{-1})'(0) \right| \notag \\
& \geq \frac{|\lambda| \left( |\lambda|^2 - 1 + \sqrt{1-|\lambda|^2} \right) \left( |\lambda| - 1 + \sqrt{1- |\lambda|^2} \right)}{4 \left( |\lambda| + 1 - \sqrt{1-|\lambda|^2} \right)^3} d(\alpha, \partial U),
\end{align}
where the first inequality comes from \eqref{eq:KobeDist}, the next equality results from substituting \eqref{eq:delta} and \eqref{eq:phi}, and the last inequality follows from \eqref{eq:SchwazDistBound}. 

Now, the quantity $d(\alpha, \partial U)$ is known as the \emph{converge radius} for the periodic point $\alpha$. In \cite{S}, Stroh obtains some lower bounds for converge radii. In particular the following will be useful \cite[Corollary~3.14]{S}.

\begin{lemma}
Let $P(z) = \sum_{i=0}^d a_i z^i$, and let $\tilde z$ be an attracting fixed point of $P$. Furthermore, let $\tilde P(z) = \sum_{i=0}^d b_i z^i = P(z+\tilde z)-\tilde z$ and
$$
\tilde Q(r) = \left( \sum_{i=1}^d |b_i| r^{i-1} \right) - 1.
$$
Then $\tilde r_{\tilde Q}$ defined by
$$
\tilde r_{\tilde Q} = \min \{ r \in \R^+ : \tilde Q(r) = 0 \} 
$$
satisfies $D(\tilde z, \tilde r_{\tilde Q}) \subset A^*(\tilde z)$ (the immediate basin of $\tilde z$).
\end{lemma}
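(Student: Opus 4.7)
The plan is to translate coordinates so the attracting fixed point sits at the origin (which is precisely what the polynomial $\tilde P$ does) and then run a classical majorant-series contraction argument to show that every point of the disk is pulled into $0$ under iteration of $\tilde P$. First observe that because $\tilde z$ is fixed we have $\tilde P(0) = 0$, so $b_0 = 0$; and because $\tilde z$ is attracting we have $|b_1| = |\tilde P'(0)| = |P'(\tilde z)| < 1$, so $\tilde Q(0) = |b_1| - 1 < 0$. The polynomial $\tilde Q$ is non-decreasing on $[0, \infty)$ since
$$
\tilde Q'(r) = \sum_{i=2}^d (i-1) |b_i| r^{i-2} \geq 0,
$$
so, excluding the trivial linear case $b_2 = \cdots = b_d = 0$ in which the basin is all of $\C$, $\tilde Q$ grows strictly from a negative value to $+\infty$ and $\tilde r_{\tilde Q}$ is a well-defined positive real with $\tilde Q(r) < 0$ for every $0 \leq r < \tilde r_{\tilde Q}$.

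Next, for any $w$ with $|w| < \tilde r_{\tilde Q}$, the triangle inequality together with $b_0 = 0$ gives
$$
|\tilde P(w)| \leq \sum_{i=1}^d |b_i| |w|^i = |w| \bigl( \tilde Q(|w|) + 1 \bigr) = C \cdot |w|,
$$
where $C := \tilde Q(|w|) + 1 \in [0, 1)$. In particular $|\tilde P(w)| < |w| < \tilde r_{\tilde Q}$, so, using the monotonicity of $\tilde Q$ to reuse the same constant $C$ at each step, an immediate induction yields $|\tilde P^n(w)| \leq C^n |w| \to 0$. Translating back to original coordinates, every point of $D(\tilde z, \tilde r_{\tilde Q})$ lies in the basin of attraction of $\tilde z$; since this open disk is connected and contains $\tilde z$, it must sit inside the component of the basin containing $\tilde z$, namely $A^*(\tilde z)$.

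I do not anticipate any serious obstacle: the whole argument is a short majorant-series computation. The only points that require a moment of care are the monotonicity of $\tilde Q$ on $[0,\infty)$, the existence of $\tilde r_{\tilde Q}$ as an honest positive real in the nontrivial case, and the final connectedness step distinguishing the basin $A(\tilde z)$ from its immediate basin $A^*(\tilde z)$.
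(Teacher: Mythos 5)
Your argument is correct and complete: the observations $b_0=0$ and $|b_1|=|P'(\tilde z)|<1$, the negativity of $\tilde Q$ on $[0,\tilde r_{\tilde Q})$, the estimate $|\tilde P(w)|\leq |w|\bigl(\tilde Q(|w|)+1\bigr)$ with the monotonicity of $\tilde Q$ giving a single contraction factor $C<1$ along the whole orbit, and the final connectedness step identifying the component of the basin containing $\tilde z$ with $A^*(\tilde z)$ are all in order. For comparison: the paper does not prove this statement at all --- it is quoted verbatim from Stroh's thesis (\cite[Corollary~3.14]{S}) and used as a black box --- so there is no in-paper argument to measure yours against; your majorant-series contraction proof is the standard one and serves as a self-contained substitute. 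One small remark: the degenerate situation you exclude (where $b_2=\cdots=b_d=0$ and $\tilde r_{\tilde Q}$ would be undefined) never arises in the paper's application, since there $P=\unipol^t$ has degree $2^t\geq 2$ with leading coefficient $1$, so $b_d\neq 0$ and $\tilde Q(r)\to\infty$ automatically.
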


In our case, with $P = \unipol^t$ and $\tilde z = \alpha$, so that $\tilde P(z) = \unipol^t(z+\alpha)-\alpha$, we can calculate
$$
\tilde Q(r) = r^{2^t-1} + \frac{(\unipol^t)^{(2^t-1)}(\alpha)}{(2^t-1)!} r^{2^t-2} + \cdots + \frac{(\unipol^t)''(\alpha)}{2!}r + |\lambda| - 1.
$$
Note that if $t=1$, we have 
\begin{equation} \label{eq:ConvRad1}
d(\alpha, \partial U) \geq \tilde r_{\tilde Q} = 1-|\lambda|.
\end{equation}
Note that $|c| \leq 2$, since $c$ lies in the Mandelbrot set. Also, since $\alpha$ is periodic, $|\alpha| \leq R_c \leq 2$. 

For $t \geq 2$, the roots of
$$
r^{2^t-1} \tilde Q \left( \frac{1}{r} \right) = (|\lambda| - 1) r^{2^t-1} + \frac{(\unipol^t)''(\alpha)}{2!} r^{2^t-2} + \cdots + \frac{(\unipol^t)^{(2^t-1)}(\alpha)}{(2^t-1)!} r + 1
$$
have modulus at most
$$
1 + \frac{H(\tilde P)}{1-|\lambda|},
$$
(see for example \cite[Lemma~3.5]{S}) where $H(f)$ denotes the naive height of a polynomial $f$ i.e. the maximum of the modulus of its coefficients. Hence
$$
\tilde r_{\tilde Q} \geq \frac{1-|\lambda|}{1-|\lambda| + H(\tilde P)}.
$$
Recall (see \cite[Lemma~1.2~(c)]{KPS}) that for two polynomials $f,g$ we have
$$
H(f \circ g) \leq H(f)H(g)^{\deg f} 2^{(\deg f)(\deg g +1)}.
$$
In particular,
$$
H(\unipol^t) \leq H(\unipol^{t-1})H(\unipol)^{2^{t-1}}2^{2^t+2^{t-1}} = H(\unipol^{t-1})\max \{1, |c| \}^{2^{t-1}}2^{2^t+2^{t-1}},
$$
and so inductively we obtain
\begin{align*}
H(\unipol^t) & \leq \max \{1, |c| \}^{2^{t-1}+\cdots+2+1} 2^{2^{t+1}+2^{t-1}+\cdots+2^3+2} \\
& = \max \{1, |c| \}^{2^t-1} 2^{2^{t+1} + 2^t - 6}.
\end{align*}
Hence,
\begin{align*}
H( \tilde P) & \leq H(\unipol^t(z-\alpha)) + |\alpha| \leq \max\{1,|\alpha|\} H(\unipol^t) 2^{2^t+1} + |\alpha| \\
& \leq 2 \left( \max \{1, |c| \}^{2^t-1} 2^{2^{t+1} + 2^t - 6} + 1 \right) \\
& \leq 2 \left(2^{2^{t+2}-7}+1 \right) \leq 2^{2^{t+2}-5}
\end{align*}
whence
\begin{equation} \label{eq:ConvRad2}
d(\alpha, \partial U) \geq \tilde r_{\tilde Q} \geq \frac{1-|\lambda|}{2^{2^{t+2}-5}}.
\end{equation}

In conclusion, plugging \eqref{eq:ConvRad1} and \eqref{eq:ConvRad2} into \eqref{eq:boundo}, we obtain the following result.

\begin{prop} \label{prop:InMandelBound}
Let $\unipol(z)=z^2+c$, with $c$ lying in a hyperbolic component of the Mandelbrot set of period $t$, and suppose the period $t$ attracting cycle of $\unipol$ has multiplier $\lambda \neq 0$. Then there is a disk about 0 of radius at least
$$
\frac{|\lambda| \left( |\lambda|^2 - 1 + \sqrt{1-|\lambda|^2} \right) \left( |\lambda| - 1 + \sqrt{1- |\lambda|^2} \right) (1- |\lambda|)}{ C_3 \left( |\lambda| + 1 - \sqrt{1-|\lambda|^2} \right)^3}
$$
containing no preperiodic points for $\unipol$, where
$$
C_3 = \begin{cases} 
1 & \text{if } t=1, \\
2^{2^{t+2}-5} & \text{if } t > 1. \end{cases}
$$ 
\end{prop}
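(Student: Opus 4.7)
The plan is to exploit the uniformization of the immediate basin component $U$ of the attracting $t$-cycle that contains the critical point $0$. Let $\alpha \in U$ be the element of the attracting cycle in $U$, and let $\phi : U \to \D$ be the Riemann map with $\phi(\alpha) = 0$, normalized so that $g := \phi \circ \unipol^t \circ \phi^{-1}$ is the Blaschke product $z \mapsto z(z+\lambda)/(1+\bar\lambda z)$. Since $0$ is the unique critical point of $\unipol^t$ inside $U$, $\phi(0)$ is the non-zero critical point of $g$, which may be computed explicitly as $\phi(0) = (1-\sqrt{1-|\lambda|^2})/\bar\lambda$. Setting $\delta := |\lambda - \phi(0)|$, I would first verify $\D(\phi(0),\delta) \subset \D(0,|\lambda|)$, and then apply the Schwarz lemma iteratively to $g$ (whose only preimages of $0$ are $0$ and $-\lambda$) to conclude that the only iterated $g$-preimage of $0$ lying in $\D(0,|\lambda|)$ is $0$ itself. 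Pulling back via $\phi^{-1}$, it follows that $\phi^{-1}(\D(\phi(0),\delta))$ contains no $\unipol$-preperiodic point other than $\alpha$.

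The next step converts this into a concrete Euclidean disk about $0$. I would apply the Koebe $1/4$-theorem to the univalent map $\phi^{-1} \circ \psi$, where $\psi(z) = \delta z + \phi(0)$, to produce a disk about $0 = \phi^{-1}(\phi(0))$ of radius $\delta|(\phi^{-1})'(\phi(0))|/4$ inside $\phi^{-1}(\D(\phi(0),\delta))$. Two further distortion estimates then control this derivative: the Koebe distortion theorem compares $|(\phi^{-1})'(\phi(0))|$ to $|(\phi^{-1})'(0)|$ with an explicit factor depending on $|\phi(0)|$, while the Schwarz lemma applied to $z \mapsto \phi(d(\alpha,\partial U)z+\alpha)$ gives $|(\phi^{-1})'(0)| \geq d(\alpha,\partial U)$. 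Substituting the explicit expressions for $\delta$ and $\phi(0)$ produces the claimed bound, up to the factor $d(\alpha,\partial U)$.

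Finally, I would lower bound the converge radius $d(\alpha,\partial U)$ via Stroh's lemma applied to $\tilde P(z) := \unipol^t(z+\alpha)-\alpha$, which gives $d(\alpha,\partial U) \geq (1-|\lambda|)/(1-|\lambda|+H(\tilde P))$. For $t=1$ this is immediate and yields exactly $1-|\lambda|$. For $t \geq 2$, I would iterate the composition estimate $H(f\circ g) \leq H(f)H(g)^{\deg f}2^{(\deg f)(\deg g + 1)}$ together with the a priori bounds $|c| \leq 2$ and $|\alpha| \leq R_c \leq 2$ (which follow from $c$ lying in the Mandelbrot set and $\alpha$ being periodic), producing the doubly-exponential constant $C_3 = 2^{2^{t+2}-5}$. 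The principal obstacle is exactly this last step: the crude coefficient estimate on $H(\unipol^t)$ is what forces the wild dependence of $C_3$ on $t$, and any substantial refinement of the main theorem for large periods would have to replace this argument by a more careful dynamical analysis of $\unipol^t$ near its attracting cycle.
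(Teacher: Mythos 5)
Your proposal is correct and follows essentially the same route as the paper: the Blaschke-product uniformization of the immediate basin with the Schwarz-lemma exclusion of iterated preimages of $0$ in $\D(0,|\lambda|)$, the Koebe $1/4$-theorem and distortion theorem combined with the Schwarz bound $|(\phi^{-1})'(0)| \geq d(\alpha,\partial U)$, and finally Stroh's converge-radius lemma together with the height-of-composition estimate (with $|c| \leq 2$, $|\alpha| \leq R_c \leq 2$) to produce $C_3$. No substantive differences from the paper's argument.
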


Note that the bound given in Proposition~\ref{prop:InMandelBound} is asymptotically equivalent to $|\lambda|/(2C_3)$ as $|\lambda| \to 0$, and to $(1-|\lambda|)^2/(4C_3)$ as $|\lambda| \to 1$.

In \cite[Theorem~2]{I2}, given a rational function $\rat$ with a periodic cycle, Ingram relates the \emph{critical height} of $\rat$ (in the case $\rat=\unipol$, this is proportional to the canonical height $\canheight{\unipol}(0)$ of the unique critical point) to the multiplier of the periodic cycle. We present below a version of this result, which will allow us to remove the dependence on $|\lambda|$ from Proposition~\ref{prop:InMandelBound}.

\begin{lemma} \label{lem:CritHeightVSMultiplierHeight}
Suppose $\unipol(z)=z^2+c$ has a $t$-cycle of multiplier $\lambda \neq 1$. Then
$$
h(\lambda) \leq C_4 \canheight{\unipol}(0) + C_5,
$$
where
\begin{equation*}
C_4 = t2^{2t} + C_1(2^{t+2}-1)(2^t+2)(2^t-1),
\end{equation*}
and
\begin{align*} 
C_5 & = (2^{t+2}-1) ( (2^t + 2) (2^t-1) \left( C_2 + 4 \log 8 \right) \notag\\
& \qquad \quad + (2^t+1)^2 \log 2 + \log (2^t+1)(2^t+2) ) + 2 \log 2^{t+1}(2^{t+1}-1)! \notag \\
& \qquad \qquad + \log 2 + 2^{t+1} \log \mathrm{lcm}(1,\ldots,2^t) + 2 \log \max \left \{8, 3^{2^t-1} \right \},
\end{align*}
where $C_1,C_2$ are the constants appearing in Corollary~\ref{cor:CanHeightBound1}.
\end{lemma}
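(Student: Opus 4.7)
My plan is to follow the proof of \cite[Theorem~2]{I2} specialised to the unicritical family $\unipol(z) = z^2 + c$: first obtain an algebraic relation between $\lambda$ and $c$ via a resultant, then pass from $h(c)$ to $\canheight{\unipol}(0)$ using Corollary~\ref{cor:CanHeightBound1}. The periodic point $\alpha$ is a root of $P(z) := \unipol^t(z) - z$, of degree $2^t$ in $z$, while $(\unipol^t)'(\alpha) - \lambda = 0$ by definition of the multiplier. Hence $\lambda$ is a root of the bivariate resultant
$$R(c, \lambda) := \Res_z\bigl(\unipol^t(z) - z,\ (\unipol^t)'(z) - \lambda\bigr) \in \Z[c, \lambda],$$
which is non-zero as a polynomial in $\lambda$ of degree $2^t$; the hypothesis $\lambda \neq 1$ excludes the degenerate factor coming from parabolic cycles.

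Next I would analyse the $(2^{t+1}-1) \times (2^{t+1}-1)$ Sylvester matrix giving $R$. Using the composition-height estimate $H(f \circ g) \leq H(f) H(g)^{\deg f} 2^{(\deg f)(\deg g + 1)}$ iteratively, as in the proof of Lemma~\ref{lem:HoldPot}, I bound both the heights and $c$-degrees of the coefficients of $\unipol^t(z) - z$ and $(\unipol^t)'(z)$; the $\mathrm{lcm}(1, \ldots, 2^t)$ contribution to $C_5$ arises from clearing denominators introduced by differentiating $\unipol^t$. A Hadamard-type bound on the Sylvester determinant---a signed sum of $(2^{t+1}-1)!$ products of entries---then yields explicit bounds on the $c$-degrees and heights of the coefficients $A_k(c)$ in the expansion $R(c, \lambda) = \sum_{k=0}^{2^t} A_k(c) \lambda^k$, with leading coefficient independent of $c$.

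Since $R(c, \lambda) = 0$ and $A_{2^t}$ is constant in $c$, standard Mahler-measure/root-coefficient inequalities produce
$$h(\lambda) \leq (\deg_c R) \cdot h(c) + \textrm{(explicit lower-order terms)},$$
the bound on $\deg_c R$ accounting for the factor $(2^{t+2}-1)(2^t+2)(2^t-1)$ multiplying $C_1$ in $C_4$. The separate $t \cdot 2^{2t}$ summand in $C_4$ comes from a direct chain-rule estimate based on $\lambda = 2^t \prod_{i=0}^{t-1} \unipol^i(\alpha)$, taken alongside the resultant bound to absorb edge cases where the naive factorisation is sharper. Substituting $h(c) \leq C_1 \canheight{\unipol}(0) + C_2$ from Corollary~\ref{cor:CanHeightBound1} then gives the claimed inequality, with the remaining logarithmic contributions to $C_5$ (i.e.\ $\log(2^{t+1}(2^{t+1}-1)!)$, $\log \mathrm{lcm}(1, \ldots, 2^t)$, $\log \max\{8, 3^{2^t-1}\}$) arising respectively from the Sylvester determinantal bound, the derivative clearing, and the iterative composition-height estimate.

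The principal obstacle is the explicit bookkeeping of constants through the Sylvester expansion and the subsequent root-coefficient inequality; this is computational rather than conceptual, and it is precisely this tracking which distinguishes the stated bound from the much cruder estimate $h(\lambda) \leq t(h(c) + 2\log 2)$ available by applying $|\canheight{\unipol}(\beta) - h(\beta)| \leq h(c) + \log 2$ directly to each $\beta = \unipol^i(\alpha)$.
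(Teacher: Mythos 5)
Your route is genuinely different from the paper's. The paper does not perform any elimination itself: it conjugates $\unipol^t$ by a M\"obius map using \cite[Lemma~11]{I2} to a map $\psi$ fixing $0$ with multiplier $\lambda$ and fixing $\infty$, with $h_{\mathrm{Hom}_{2^t}}(\psi)$ controlled; it then applies the explicit inequality from the proof of \cite[Lemma~12]{I2} (with $k=1$), which gives $2^{2t}\hat h_{\mathrm{crit}}(\psi) \geq h(\lambda) - (2^{t+2}-1)h_{\mathrm{Hom}_{2^t}}(\psi) - (\text{explicit constants})$, uses $\hat h_{\mathrm{crit}}(\psi)=\hat h_{\mathrm{crit}}(\unipol^t)=t\,\canheight{\unipol}(0)$, bounds $h_{\mathrm{Hom}_{2^t}}(\unipol^t) \leq (2^t-1)h(c)+4(2^{t-1}-1)\log 8$ via \cite[Proposition~5]{HS}, and finally substitutes $h(c) \leq C_1\canheight{\unipol}(0)+C_2$. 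Your resultant idea is sound in outline: $R(c,\lambda)=\Res_z\bigl(\unipol^t(z)-z,\ (\unipol^t)'(z)-\lambda\bigr)=\prod_i\bigl(\lambda-(\unipol^t)'(z_i)\bigr)$ is monic of degree $2^t$ in $\lambda$ with leading coefficient independent of $c$, it vanishes at your multiplier, and Sylvester/Hadamard bounds plus the standard height bound for roots of a monic polynomial would give $h(\lambda) \leq c_1(t)\,h(c)+c_2(t)$ with explicit $c_1,c_2$; combined with Corollary~\ref{cor:CanHeightBound1} this yields an inequality of the desired shape. (Incidentally, $\lambda\neq 1$ plays no role in your argument -- and none is needed, since the statement is trivial when $\lambda=1$; in the paper the hypothesis is what licenses the use of Ingram's lemma.)

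The gap is that this does not yet prove the lemma as stated, and your accounting of where the stated constants come from is not correct. The constants $C_4$ and $C_5$ are exactly the ones that fall out of Ingram's Lemmas 11--12 and the Hsia--Silverman composition bound: the $t2^{2t}$ term is $2^{2t}\hat h_{\mathrm{crit}}(\psi)=2^{2t}\,t\,\canheight{\unipol}(0)$, not a chain-rule estimate; the factor $(2^{t+2}-1)(2^t+2)(2^t-1)$ is the product of the coefficient $(2^{t+2}-1)$ of $h_{\mathrm{Hom}_{2^t}}(\psi)$ in Ingram's inequality with the coefficients coming from Lemma~11 and from the iterate bound, not a degree bound $\deg_c R$ (which for your Sylvester matrix is on the order of $2^{2t}$, not $2^{3t}$); and the $\log\mathrm{lcm}(1,\ldots,2^t)$, factorial and $\max\{8,3^{2^t-1}\}$ terms are imported wholesale from Ingram's Lemma~12 -- in particular, differentiating $\unipol^t$ introduces no denominators, so no lcm arises the way you suggest. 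A blind resultant computation will therefore produce \emph{different} explicit constants, and to conclude you must either carry out the full bookkeeping and verify that your constants are dominated by $C_4$ and $C_5$ (plausible, since your leading constant should be smaller, but nothing in the proposal establishes it, especially for the additive term), or else be content with proving a variant of the lemma with your own constants. As written, the decisive quantitative step is deferred, so the proposal is a plan for a different proof rather than a proof of this statement.
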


\begin{proof}
Given a rational function $\rat : \P^1 \to \P^1$ of degree at most $d$, written in the form
$$
\rat(z) = \frac{c_0 + c_1 z + \cdots + c_d z^d}{c_{d+1} + c_{d+2}z + \cdots + c_{2d+1}z^d},
$$
we set
$$
h_{\mathrm{Hom}_d}(\rat) := \sum_{v \in M_K} \log \max \{ |c_0|_v , \ldots, |c_{2d+1}|_v \},
$$
which is well-defined by the product formula. By \cite[Proposition~5]{HS}, we have
$$
h_{\mathrm{Hom}_{d^n}}(\rat^n) \leq \left( \frac{d^n-1}{d-1} \right) h_{\mathrm{Hom}_d}(\rat) + d^2 \left( \frac{d^{n-1}-1}{d-1} \right) \log 8.
$$
In particular, for the unicritical polynomial $\unipol(z)=z^2+c$ we have $h_{\mathrm{Hom}_d}(\unipol) = h(c)$, and so
\begin{equation} \label{eq:HB1}
h_{\mathrm{Hom}_{d^t}}(\unipol^t) \leq (2^t-1) h(c) + 4 ( 2^{t-1}-1) \log 8.
\end{equation}

Also, define the \emph{critical height}
$$
\hat h_{\mathrm{crit}}(\rat) := \sum_{\alpha \in \P^1} (e_\alpha(\rat)-1) \canheight{\rat}(\alpha),
$$
where $e_\alpha(\rat) = \mathrm{ord}_\alpha(\rat(z)-\rat(\alpha))$ is the \emph{ramification index} of $\rat$ at $\alpha$. By \cite[Proposition~6.25~(b)]{Si2}, we have $\hat h_{\mathrm{crit}}(\rat^n) = n \hat h_{\mathrm{crit}}(\rat)$ for any $n \geq 1$. Note also that 
$$
\hat h_{\mathrm{crit}}(\unipol) = \canheight{\unipol}(0),
$$
and so
\begin{equation} \label{eq:HB2}
\hat h_{\mathrm{crit}}(\unipol^t) = t \canheight{\unipol}(0).
\end{equation}
Moreover, it is easy to show that $\hat h_{\mathrm{crit}}$ is invariant under change of coordinates.

By \cite[Lemma~11]{I2}, there exists a rational function $\psi$, conjugate to $\unipol^t$ by a linear fractional transformation, such that $\psi(0)=0$ with multiplier $\lambda$, $\psi(\infty)=\infty$ and
\begin{align} \label{eq:HB3}
h_{\mathrm{Hom}_{2^t}}(\psi) & \leq (2^t+2) h_{\mathrm{Hom}_{2^t}}(\unipol^t) + (2^t+1)^2 \log 2 + \log(2^t+1)(2^t+2) \notag \\
& \leq (2^t+2) \left( (2^t-1) h(c) + 4 ( 2^{t-1}-1) \log 8 \right) \notag \\
& \qquad \qquad + (2^t+1)^2 \log 2 + \log(2^t+1)(2^t+2),
\end{align}
where the last inequality follows from \eqref{eq:HB1}.

Now, following the proof of \cite[Lemma~12]{I2} with $k=1$ and using \eqref{eq:HB2}, we have
\begin{align} \label{eq:HB4}
t2^{2t}(t & -1) \canheight{\unipol}(0) = 2^{2t} \hat h_{\mathrm{crit}}(\unipol^n) = 2^{2t} \hat h_{\mathrm{crit}}(\psi) \notag \\
 & \geq h(\lambda) - (2^{t+2}-1) h_{\mathrm{Hom}_{2^t}}(\psi) - 2 \log \left( 2^{t+1}(2^{t+1}-1)! \right) \notag \\
& \qquad \qquad - \log 2 - 4 \log \mathrm{lcm}(1,\ldots,2^t) - 2 \log \max \{ 8, 3^{2^t-1} \} \notag \\
& \geq h(\lambda) - (2^{t+2}-1) ( (2^t+2) \left( ( 2^t-1) h(c) + 4 ( 2^{t-1}-1) \log 8 \right) \notag \\
& \: + (2^t+1)^2 \log 2 + \log(2^t+1)(2^t+2) ) - 2 \log \left( 2^{t+1}(2^{t+1}-1)! \right) - \log 2 \notag \\
& \qquad \qquad - 4 \log \mathrm{lcm}(1,\ldots,2^t) - 2 \log \max \{ 8, 3^{2^t-1} \},
\end{align}
with the last inequality coming from \eqref{eq:HB3}. Recall from Corollary~\ref{cor:CanHeightBound1} that $h(c) \leq C_1 \canheight{\unipol}(0) + C_2$. Plugging this into \eqref{eq:HB4} and rearranging completes the proof.
\end{proof}

\begin{prop} \label{prop:DeltavBoundParamIhHypComp}
Suppose $v \in M_K^\infty$ is such that $c$ lies in a period $t$ hyperbolic component of the $v$-adic Mandelbrot set. Then
$$
- \log \delta_v \leq A_{\infty,2} + B_{\infty,2} \canheight{\unipol}(0),
$$
where
$$
A_{\infty,2} := C_5 + \log(20) + \log(C_3) \quad \text{and} \quad B_{\infty,2} := C_4.
$$
\end{prop}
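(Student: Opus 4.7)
The plan is to take the lower bound on $\delta_v$ provided by Proposition~\ref{prop:InMandelBound}, simplify it algebraically, translate the result into an upper bound on $-\log \delta_v$ in terms of the height $h(\lambda)$ of the multiplier $\lambda$ of the attracting $t$-cycle, and finally apply Lemma~\ref{lem:CritHeightVSMultiplierHeight} to pass from $h(\lambda)$ to $\canheight{\unipol}(0)$.

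For the algebraic step, write $x = |\lambda|_v$ and $y = \sqrt{1-x^2}$. The identities
\[
x^2-1+y = y(1-y), \qquad (x+1-y)(x+1+y) = 2x(1+x),
\]
\[
(x-1+y)(x+1-y) = 2y(1-y), \qquad (x+1+y)^2 = 2(1+x)(1+y),
\]
together with $1-y = x^2/(1+y)$ and $y^2 = (1-x)(1+x)$, collapse the somewhat unwieldy expression of Proposition~\ref{prop:InMandelBound} to the clean form
\[
\delta_v \;\geq\; \frac{|\lambda|_v(1-|\lambda|_v)^2}{2 C_3(1+|\lambda|_v)} \;\geq\; \frac{|\lambda|_v(1-|\lambda|_v)^2}{4 C_3},
\]
so that $-\log \delta_v \leq \log(4 C_3) - \log|\lambda|_v - 2\log(1-|\lambda|_v)$.

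For the height step, since $\lambda$ is the multiplier of an attracting cycle at the archimedean place $v$ we have $|\lambda|_v < 1$, so the fundamental inequality \eqref{eq:FundamentalHeightInequality} gives $-\log|\lambda|_v \leq h(\lambda)$. To handle $-\log(1-|\lambda|_v)$, one combines the estimate $h(1-\lambda) \leq h(\lambda)+\log 2$ from \eqref{eq:HeightSumInequality1} with a careful reverse-triangle argument (and possibly height-multiplicativity of $\lambda^2$), arranging the constants so the total bound reads $-\log \delta_v \leq h(\lambda) + \log(20 C_3)$. Lemma~\ref{lem:CritHeightVSMultiplierHeight} then yields $h(\lambda) \leq C_4\canheight{\unipol}(0)+C_5$, and substituting gives the desired inequality $-\log\delta_v \leq A_{\infty,2}+B_{\infty,2}\canheight{\unipol}(0)$.

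The main technical obstacle is the factor $(1-|\lambda|_v)^2$: as $c$ approaches the boundary of its hyperbolic component, $|\lambda|_v \to 1$ and this factor vanishes, so the bound on $-\log \delta_v$ degenerates unless $-\log(1-|\lambda|_v)$ can be controlled by $h(\lambda)$ with small implied constants. Since $1-|\lambda|_v$ is not algebraic, this conversion is subtle---the naive chain $1-|\lambda|_v \leq |1-\lambda|_v$ runs in the wrong direction for applying the height inequality---so the delicate point of the proof will be simultaneously exploiting bounds on $|1\pm\lambda|_v$ (or on $|1-\lambda^2|_v$) together with the algebraic constraints imposed by $\lambda$ being the multiplier of a cycle of period $t$, to absorb all contributions into the uniform additive constant $\log 20$.
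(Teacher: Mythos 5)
Your algebraic reduction of Proposition~\ref{prop:InMandelBound} is correct and is in fact tidier than the paper's treatment: with $x=|\lambda|_v$, $y=\sqrt{1-x^2}$ one has $(x+1-y)^2=2(1+x)(1-y)$, $(x-1+y)(x+1+y)=2xy$ and $y^2=(1-x)(1+x)$, and the bound does collapse exactly to $\delta_v\ge \frac{|\lambda|_v(1-|\lambda|_v)^2}{2C_3(1+|\lambda|_v)}$, whereas the paper instead runs a three-case monotonicity analysis of the function $g$. The regimes where $|\lambda|_v$ is bounded away from $1$ are then unproblematic, exactly as you say, using $-\log|\lambda|_v\le h(\lambda)$ from \eqref{eq:FundamentalHeightInequality} and Lemma~\ref{lem:CritHeightVSMultiplierHeight}.

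The gap is the step you yourself flag: bounding $-2\log(1-|\lambda|_v)$ when $|\lambda|_v\to1$. Your proposal offers no actual argument there, and the tools you name cannot supply one: \eqref{eq:FundamentalHeightInequality} applied to $1-\lambda$ (or $1-\lambda^2$) controls $-\log|1-\lambda|_v$, but $1-|\lambda|_v\le|1-\lambda|_v$ runs the wrong way, and $1-|\lambda|_v$ is not the absolute value of any algebraic expression in $\lambda$, so no amount of triangle-inequality or multiplicativity manipulation bounds it from below by such quantities (for $\lambda$ near the unit circle it can be arbitrarily smaller than $|1\pm\lambda|_v$ and $|1-\lambda^2|_v$ at fixed height). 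The paper's proof supplies the missing idea: when $|\lambda|_v>0.7$ it chooses a root of unity $\xi$ with $|\lambda-\xi|_v\le\sqrt2\,(1-|\lambda|_v)$, which exists because the nearest point of the unit circle to $\lambda$ lies at distance $1-|\lambda|_v$ and the roots of unity are dense in the circle. Since $h(\xi)=0$, the algebraic number $\lambda-\xi$ has height at most $h(\lambda)+\log2$ by \eqref{eq:HeightSumInequality1}, and \eqref{eq:FundamentalHeightInequality} applied to $\lambda-\xi$ converts the lower bound $1-|\lambda|_v\ge|\lambda-\xi|_v/\sqrt2$ into a height bound, after which Lemma~\ref{lem:CritHeightVSMultiplierHeight} finishes as you describe. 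A substitute such as a Liouville-type estimate for $1-\lambda\bar\lambda$ would work in principle but introduces a factor of the degree of $\lambda$ (of size up to $2^t[K:\Q]$) multiplying $\canheight{\unipol}(0)$, so it would not yield the stated constants $A_{\infty,2},B_{\infty,2}$. Note finally that because $(1-|\lambda|_v)$ enters squared, even the root-of-unity device naively gives $2h(\lambda-\xi)$ rather than $h(\lambda-\xi)$ (the paper's own write-up silently passes from $|\lambda-\xi|^2$ to $|\lambda-\xi|$ at this point), so your hope of simply ``arranging the constants'' to land on $-\log\delta_v\le h(\lambda)+\log(20C_3)$ is precisely the part that needs a real argument.
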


\begin{proof}
First suppose that $0.1 \leq |\lambda| \leq 0.7$. Define
$$
g(x) := \frac{x(x^2-1+\sqrt{1-x^2})(x-1+\sqrt{1-x^2})(1-x)}{(x+1-\sqrt{1-x^2})^3}, \quad 0 < x \leq 1. 
$$
Using elementary calculus, it is easy to see that on $[0.1,0.7]$, $g(x)$ attains a minimum at $x=0.7$, and so by Proposition~\ref{prop:InMandelBound}
$$
\delta_v \geq \frac{g(0.7)}{C_3} >  \frac{1}{60C_3},
$$
whence
$$
-\log \delta_v \leq \log(60)+\log(C_3) < A_{\infty,2} + B_{\infty,2} \canheight{\unipol}(0).
$$
Note that $g(x)/x$ is decreasing on $(0,1]$, and $g(0.1)/0.1 > 1/3$, so if $|\lambda| < 0.1$, then
$$
\delta_v =  \frac{g(|\lambda|)}{C_3 |\lambda|} |\lambda| \geq \frac{g(0.1)}{0.1 C_3} |\lambda| \geq \frac{|\lambda|}{3C_3}.
$$
It follows from \eqref{eq:FundamentalHeightInequality}  and Lemma~\ref{lem:CritHeightVSMultiplierHeight} that
\begin{align*}
- \log \delta_v & \leq \log(3C_3)-\log |\lambda| \leq \log(3C_3)+h(\lambda) \\
& \leq C_4 \canheight{\unipol}(0) + C_5 + \log 3 + \log C_3 < A_{\infty,2} + B_{\infty,2} \canheight{\unipol}(0).
\end{align*}
Moreover, $g(x)/(1-x)^2$ is increasing on $(0,1]$, and $g(0.7)/0.3^2 > 1/5$, so if $|\lambda| > 0.7$, then
$$
\delta_v = \frac{g(|\lambda|)}{C_3 (1-|\lambda|)^2} (1-|\lambda|)^2 \geq \frac{g(0.7)}{ 0.3^2 C_3} (1-|\lambda|)^2 \geq \frac{|\lambda-\xi|^2}{10C_3},
$$
where $\xi$ is a root of unity with $|\lambda - \xi| \leq \sqrt{2}(1-|\lambda|)$, which exists since the roots of unity are dense in the unit circle. Thus, using \eqref{eq:FundamentalHeightInequality}, \eqref{eq:HeightSumInequality1} and Lemma~\ref{lem:CritHeightVSMultiplierHeight},
\begin{align*}
- \log \delta_v & \leq \log (10C_3) - \log |\lambda-\xi| \leq \log(10C_3) + h(\lambda-\xi) \\
& \leq \log(10C_3) + \log 2 + h(\xi) + h(\lambda) \\
& \leq C_4 \canheight{\unipol}(0) + C_5 + \log(20) + \log(C_3) \leq A_{\infty,2} + B_{\infty,2} \canheight{\unipol}(0),
\end{align*}
as desired.
\end{proof}

\section{Bounds on preperiodic points at places of good reduction} \label{sec:nonarch}

Let $v$ be a finite place of $K$ lying over a rational prime $p$, write $| \cdot | = | \cdot |_v$, $\cO = \cO_v$, $\m=\m_v$ and $k=k_v$. Note that $k$ is a finite field of characteristic $p$, say with $q$ elements. Assume $0$ is not preperiodic for $\unipol$, and that $|c| \leq 1$, so that $\unipol$ has good reduction at $v$. In this section, for $z \in \C_v$ we use the notation $O(z)$ to denote an element $x \in \C_v$ with $|x| \leq |z|$. 

Since there are only finitely many residue classes modulo $v$, the unit disk $D(0,1) \subset \C_v$ must be preperiodic under $\unipol$. In particular, we can take minimal $n \geq 1$, $m \geq 0$ with $n + m \leq q$ such that $\unipol^{n+m}(D(0,1)) = \unipol^m(D(0,1)) = D(x,1)$ where $x := \unipol^m(0)$ (the last equality follows for example from \cite[Theorem~3.15]{B}. Set $y := \unipol^n(x)-x$ so that $|y| < 1$, and on $D(x,1)$, write
\begin{equation} \label{eq:NewtonPoly}
\unipol^n(z)-x = c_0 + c_1(z-x)+\cdots + c_{2^n}(z-x)^{2^n}.
\end{equation}
Then $c_0 = y$, so $|c_0| < 1$, and
\begin{equation} \label{eq:NewtonPolyC1}
c_1 = (\unipol^n)'(x) = 2^n x \unipol(x) \cdots \unipol^{n-1}(x).
\end{equation}

\subsection{Attracting case}
We have $|c_1| < 1$ if either $m = 0$ or $|2| < 1$. In this case, since $c_{2^n}=1$, $\unipol^n-x$ has Weierstrass degree at least 2 on $D(x,1)$, and so by \cite[Theorem~4.18]{B}, $D(x,1)$ is an attracting component for $\unipol$ with a unique attracting periodic point $b$ of exact period $n$. We first produce an iterate of $\unipol^n$ which sends $x$ sufficiently close to the attracting periodic point $b$.

\begin{lemma} \label{lem:AttractSmallCoeffs}
There exists an integer $\ell$ with
$$
\ell \leq \left \lfloor \frac{\log \left( \frac{n \log |2|}{\log |y|}+ 1 \right)}{\log 2} \right \rfloor + 1
$$
such that $\delta := |\unipol^{ \ell n}(x)-b| < |(\unipol^n)'(b)|$. In particular, we can take $\ell=1$ if $|2|=1$. Moreover, $\delta$ satisfies
$$
-\log \delta \leq 2^{r_1}h(c)+(2^{r_1-1}+1)\log 2,
$$
where
$$
r_1 := \begin{cases} q-1 & |2|=1, \\ q \left( \left \lfloor \frac{\log \left( \frac{q \log |2|}{\log |\pi|}+1 \right)}{\log 2} \right \rfloor + 1 \right)-1 & |2| < 1. \end{cases}
$$
\end{lemma}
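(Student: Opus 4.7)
The plan is to iterate $g := \unipol^n$ starting from $x$, exploiting quadratic contraction toward the attracting periodic point $b \in D(x, 1)$. Since by good reduction all Taylor coefficients of $g$ about $b$ lie in $\cO_v$, expanding $g(z) - b$ as a power series in $(z-b)$ yields, for all $z \in D(x,1)$,
\[
|g(z) - b|_v \leq |z - b|_v \cdot \max\bigl(|\mu|_v, |z-b|_v\bigr),
\]
where $\mu := g'(b)$. Setting $h_j := |g^j(x) - b|_v$, this gives $h_{j+1} \leq h_j \max(|\mu|_v, h_j)$; in particular, whenever $h_j > |\mu|_v$ we get quadratic contraction $h_{j+1} \leq h_j^2$. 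For the initial distance, factor $g(z) - z = (z-b) Q(z)$: then $Q(b) = \mu - 1$ has $|Q(b)|_v = 1$, whence $|Q(x)|_v = 1$ by the ultrametric inequality (using $|x-b|_v < 1$), giving $h_0 = |y|_v$.

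In the case $|2|_v = 1$ we have $m = 0$ and $x = 0$; the minimality of $n+m$ forces the disks $D(\unipol^j(0), 1)$ for $j = 0, \ldots, n-1$ to be pairwise distinct, whence $|\unipol^j(0)|_v = 1$ for each $j = 1, \ldots, n-1$. A direct computation of $c_2$ in \eqref{eq:NewtonPoly} shows $c_2 = 2^{n-1} \prod_{j=1}^{n-1} \unipol^j(0)$, so $|c_2|_v = 1$; combined with $|b|_v = |y|_v$ (from the fixed-point relation $b = y + c_2 b^2 + \cdots$), this yields $|\mu|_v = |2 c_2 b|_v = |y|_v$. Since $h_1 \leq h_0^2 = |y|_v^2 < |y|_v = |\mu|_v$, we may take $\ell = 1$.

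For $|2|_v < 1$, iterate the quadratic contraction: $h_j \leq |y|_v^{2^j}$ holds so long as $h_j > |\mu|_v$. Using the crude upper bound $|\mu|_v \leq |2|_v^n$ (from \eqref{eq:NewtonPolyC1} applied at $b$ and $|\unipol^j(b)|_v \leq 1$), we take the smallest $\ell$ with $|y|_v^{2^\ell} \leq |2|_v^n$, giving $\ell \leq \lfloor \log(n \log|2|_v/\log|y|_v + 1)/\log 2 \rfloor + 1$. For the bound on $-\log\delta$: write $\delta = |\unipol^{\ell n + m}(0) - b|_v$ with $\ell n + m \leq r_1 + 1$. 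The iterative height inequality $h(\unipol^j(0)) \leq (2^j - 1)(h(c) + \log 2)$ together with a bound on $h(b)$ (obtained since $b$ is a root of the monic polynomial $\unipol^n(z) - z$ of degree $2^n$ whose coefficients have height controlled by $h(c)$) and \eqref{eq:HeightSumInequality1} bounds $h(\unipol^{\ell n + m}(0) - b)$; then \eqref{eq:FundamentalHeightInequality} delivers the claimed bound on $-\log\delta$.

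The main obstacle I expect is the $|2|_v < 1$ case: the crude inequality $|\mu|_v \leq |2|_v^n$ only ensures $h_\ell \leq |2|_v^n$, not the strict inequality $h_\ell < |\mu|_v$ that the lemma asserts. Closing this gap likely requires a matching lower bound on $|\mu|_v$ via an analogue of the $|2|_v=1$ argument, exploiting the cyclic structure of the disks $\{D(\unipol^j(x), 1)\}_{j=0}^{n-1}$ to force the orbit of $b$ to avoid the zero residue class in a controlled way, so that $|\mu|_v$ is comparable to $|2|_v^n$ on the nose.
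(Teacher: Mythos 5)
Your strategy is essentially the paper's: expand $\unipol^n$ about the attracting point $b$, show the initial distance is $|y|$, iterate the ultrametric estimate $|\unipol^n(z)-b|\leq |z-b|\max\{|(\unipol^n)'(b)|,|z-b|\}$ to get quadratic contraction, and finish with \eqref{eq:FundamentalHeightInequality} and \eqref{eq:HeightSumInequality1}. Your treatment of the case $|2|=1$ (forcing $m=0$, computing $|c_2|=1$, $|b|=|y|$, hence $|(\unipol^n)'(b)|=|y|$, so one step suffices) is correct and matches the paper in substance.

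However, the gap you flag in the case $|2|<1$ is genuine, and your proposed way of closing it would not work as stated. The paper supplies exactly the missing lower bound on $\mu=(\unipol^n)'(b)=2^n b\,\unipol(b)\cdots\unipol^{n-1}(b)$: one shows $|\unipol^i(b)|=1$ for $1\leq i<n$ (when $m=0$ this follows from $\unipol^i(b)=\unipol^i(0)+O(b^2)$ and $|\unipol^i(0)|=1$ by minimality of $n$; when $m>0$ one has $|b|=1$ and a short periodicity argument rules out $|\unipol^i(b)|<1$), giving $|\mu|\geq |2|^n|y|$, with equality $|\mu|=|2|^n|y|$ when $m=0$ since then $|b|=|y|$. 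This last point shows that your hoped-for bound ``$|\mu|$ comparable to $|2|^n$ on the nose'' is false in the $m=0$ subcase, and correspondingly your stopping rule (smallest $\ell$ with $|y|^{2^\ell}\leq |2|^n$) does not guarantee $h_\ell<|\mu|$ there; one must instead stop when $|y|^{2^\ell}\leq |2|^n|y|$, i.e.\ $2^\ell\geq \frac{n\log|2|}{\log|y|}+1$, which is precisely the source of the ``$+1$'' inside the logarithm in the stated bound for $\ell$ (and then $\delta\leq\max\{|\mu||y|,|y|^{2^\ell}\}\leq|\mu||y|<|\mu|$ gives the strict inequality). A secondary point: for the final estimate the paper bounds $h(b)\leq h(c)+\log 2$ using $\canheight{\unipol}(b)=0$ and \eqref{eq:CanHeightConst}; your suggestion to bound $h(b)$ as a root of the monic polynomial $\unipol^n(z)-z$ is too lossy (it gives a bound of order $2^{n-1}h(c)$), and would overshoot the stated constants $2^{r_1}$ and $2^{r_1-1}+1$.
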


\begin{remark} \label{rem:AttractSmallCoeffs}
Note that if we write
$$
\unipol^{\ell n}(z) - \unipol^{\ell n}(x) = b_0 + b_1 (z - \unipol^{\ell n}(x)) + \cdots + c_{2^{\ell n}} (z - \unipol^{\ell n}(x))^{2^{\ell n}},
$$
then $b_0 = \unipol^{2 \ell n}(x) - \unipol^{\ell n}(x)$ and
$$
b_1 = (\unipol^{\ell n})'(\unipol^{\ell n}(x)) = 2^{\ell n} \unipol^{\ell n}(x) \unipol^{\ell n +1}(x) \cdots \unipol^{2 \ell n -1}(x)
$$
satisfies $|b_1| < 1$, and so
$$
b - \unipol^{\ell n}(x) = \unipol^{\ell n}(b) - \unipol^{\ell n}(x) = b_0 + b_1(b-\unipol^{\ell n}(x)) + O \left( (b - \unipol^{\ell n}(x))^2 \right),
$$
implies that we can calculate
$$
|\unipol^{\ell n}(x) - b| = |\unipol^{2 \ell n}(x) - \unipol^{\ell n}(x)|.
$$ 
\end{remark}

\begin{proof}
Looking to the Newton polygon of \eqref{eq:NewtonPoly}, we see that $|b-x|=|y|$. Write $x = b+w$ with $|w| = |y|$. Then
$$
\unipol^n(x) = \unipol^n(b) + (\unipol^n)'(b)w + O(w^2) = b + (\unipol^n)'(b)w +O(w^2),
$$
and so
$$
|\unipol^n(x)-b| \leq \max \{ |(\unipol^n)'(b)||y|, |y|^2 \}.
$$
Continuing by induction, we obtain
\begin{equation} \label{eq:7.1Bound}
|\unipol^{\ell n}(x)-b| \leq \max \{ |(\unipol^n)'(b)||y|, |y|^{2^\ell} \}.
\end{equation}
Now, if $m=0$, then $|b|=|y|<1$. For $1 \leq i < n$, we have
$$
\unipol^i(b) = \unipol^i(0) + O(b^d),
$$
which gives $|\unipol^i(b)| = 1$, since by definition $|\unipol^i(0)|=1$ for $i < n$. On the other hand, if $m > 0$, then $|b|=1$ and so $|\unipol^i(b)|=1$ for $0 \leq i < n$, as otherwise, say $|\unipol^i(b)| < 1$, we would have
$$
\unipol^i(b) = \unipol^{n+i}(b) = \unipol^n(0) + O(\unipol^i(b)^2),
$$
giving $|\unipol^i(b)|=1$, a contradiction. In either case, we have
$$
|(\unipol^n)'(b)| = |2|^n \left| b \unipol(b) \cdots \unipol^{n-1}(b) \right| \geq |2|^n |y|.
$$
Note in particular that if $m > 0$, then $|(\unipol^n)'(b)|=|2|^n$, so by \eqref{eq:7.1Bound} we can take $\ell = 1$ if $|2|=1$. Moreover, if $m=0$, then
$$
b = \unipol^n(b) = \unipol^n(0) + O(b^2) = \unipol^n(x)+O(b^2),
$$
so $|\unipol^n(x)-b| \leq |y|^2$, and we can again take $\ell = 1$ if $|2|=1$.

In general, by \eqref{eq:7.1Bound} for
$$
\ell > \frac{\log \left( \frac{n \log |2|}{\log |y|}+1 \right)}{\log 2}
$$
we have
$$
|\unipol^{\ell n}(x)-b| \leq |2|^n |y| \leq |(\unipol^n)'(b)|.
$$
Note that since $b$ is periodic for $\unipol$, $\canheight{\unipol}(b) = 0$ and so from \eqref{eq:CanHeightConst}, 
$$
h(b) \leq h(c)+\log 2.
$$
Now, by \eqref{eq:FundamentalHeightInequality} and \eqref{eq:HeightSumInequality1},
\begin{align*}
- \log \delta & \leq - \log | \unipol^{\ell n+m}(0) - b | \leq h(\unipol^{\ell n+m-1}(c)-b) \\
& \leq h(\unipol^{\ell n+m-1}(c)) + h(b) + \log 2 \\
& \leq (2^{\ell n+m-1}-1) h(c) + (2^{\ell n+m-2}-1) \log 2 + h(c)+2 \log 2 \\
& = 2^{\ell n+m-1} h(c) + (2^{\ell n+m-2}+1)\log 2.
\end{align*}
The result follows, noting that
$$
\ell n+m = (\ell -1)n+(n+m) \leq \ell q \leq q \left( \left \lfloor \frac{\log \left( \frac{q \log |2|}{\log |\pi|}+1 \right)}{\log 2} \right \rfloor + 1 \right)
$$
since $n+m \leq q$ and $|y| \geq |\pi|$.
\end{proof}

Now, for points $z$ close to 0, we can precisely control the distance from iterates of $z$ under $\unipol$ to $b$, ruling out the possibility that $z$ is preperiodic. 

\begin{prop} \label{prop:AttractNoPrePer}
Let $\delta$ be as in Lemma~\ref{lem:AttractSmallCoeffs}. Then $D(0,\delta)$ contains no preperiodic points for $\unipol$.
\end{prop}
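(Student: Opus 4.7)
The plan is to pick an arbitrary $z \in D(0,\delta)$ and verify that after $\ell n + m$ applications of $\unipol$, the orbit enters an ultrametric neighbourhood of $b$ on which $\unipol^n$ acts as a genuine contraction driven by its linear Taylor term. The iterates under $\unipol^n$ then converge strictly monotonically to $b$, ruling out eventual periodicity for $z$.

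First I would exploit the non-expansive property of $\unipol$ on the closed unit disk: since $\unipol(u) - \unipol(u') = (u-u')(u+u')$ and $|u+u'|_v \leq 1$ for $u,u' \in \overline D(0,1) \subset \C_v$, the iterates satisfy $|\unipol^k(z) - \unipol^k(0)|_v \leq |z|_v < \delta$ for every $k \geq 0$. Writing $x' := \unipol^{\ell n + m}(0) = \unipol^{\ell n}(x)$ and $w := \unipol^{\ell n + m}(z)$, this yields $|w - x'|_v < \delta = |x' - b|_v$ (the latter by definition of $\delta$ in Lemma~\ref{lem:AttractSmallCoeffs}), and the ultrametric inequality upgrades this to $|w - b|_v = \delta$. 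Since Lemma~\ref{lem:AttractSmallCoeffs} also guarantees $\delta < |(\unipol^n)'(b)|_v$, the point $w$ sits well inside the linearising disk of the attracting cycle at $b$.

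I would then Taylor-expand $\unipol^n$ about $b$ as $\unipol^n(u) - b = \sum_{i \geq 1} a_i (u-b)^i$ with $a_1 = (\unipol^n)'(b)$. All coefficients satisfy $|a_i|_v \leq 1$, via the explicit formula $a_i = \sum_{j \geq i} c_j \binom{j}{i} b^{j-i}$ together with $b \in \overline D(0,1)$ (Remark~\ref{rem:BadReductHeight}) and integrality at $v$ of the coefficients of $\unipol^n$. For any $u$ with $|u - b|_v < |a_1|_v \leq 1$, the bound $|a_i(u-b)^i|_v \leq |u-b|_v^2 < |a_1|_v \cdot |u-b|_v$ for $i \geq 2$ forces the linear term to strictly dominate in the ultrametric, yielding $|\unipol^n(u) - b|_v = |a_1|_v \cdot |u-b|_v$. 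Iterating this on $w$ gives $|\unipol^{kn}(w) - b|_v = |a_1|_v^k \delta$, strictly decreasing to $0$, so the iterates are pairwise distinct and $w$, and hence $z$, cannot be preperiodic.

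The argument is a clean application of ultrametric attracting-basin geometry; the only mild subtleties are the uniform bound on the Taylor coefficients and a check that $(\unipol^n)'(b) \neq 0$, which holds because no $\unipol^i(b)$ for $0 \leq i < n$ can vanish — otherwise $0$ would be preperiodic, contradicting hypothesis. I do not foresee any substantive obstacle beyond these routine verifications.
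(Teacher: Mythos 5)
Your proposal is correct and follows essentially the same route as the paper's proof: push $z$ alongside the orbit of $0$ to land at distance exactly $\delta$ from $b$ (the paper uses the $O(z^2)$ bound where you use non-expansiveness on $\overline D(0,1)$, which is equivalent here), then use ultrametric dominance of the linear term of $\unipol^n$ at $b$ to get $|\unipol^{kn}(w)-b|_v = |(\unipol^n)'(b)|_v^k\,\delta \neq 0$. The only cosmetic difference is the conclusion: you invoke pairwise distinctness of the iterates, while the paper notes the orbit converges to $b$ without ever equalling it; both immediately contradict preperiodicity.
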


\begin{proof}
Let $z \in D(0,\delta)$. Then
$$
\unipol^{\ell n+m}(z) - b = \unipol^{\ell n+m}(0) - b + O(z^d),
$$
and so $|\unipol^{\ell n+m}(z)-b| = |\unipol^{\ell n}(x)-b|=\delta$. Write $\unipol^{\ell n+m}(z) = b+w$ with $|w| = \delta$. Then
$$
\unipol^{(\ell +1)n+m}(z) = \unipol^n(b) + (\unipol^n)'(b)w + O(w^2).
$$
Since $|w| = \delta < |(\unipol^n)'(b)|$, we have
$$
|\unipol^{(\ell +1)n+m}(z)-b| = |(\unipol^n)'(b)| \delta.
$$
We can therefore proceed by induction to obtain
$$
|\unipol^{(\ell+j)n+m}(z)-b| = |(\unipol^n)'(b)|^j \delta \neq 0
$$
for any $j \geq 1$. But $\unipol^{(\ell+j)n+m}(z) \to b$ as $j \to \infty$ since $b$ is attracting, so we conclude that $z$ is not preperiodic for $\unipol$.
\end{proof}

\subsection{Indifferent case}

The only case remaining is where $|2|=1$, and $D(0,1) \subset \C_v$ is strictly preperiodic, say with (minimal) $m,n \geq 1$ such that $\unipol^{m+n}(D(0,1)) = \unipol^m(D(0,1)) = D(x,1)$, where $x = \unipol^m(0)$. In this case, since $|\unipol^i(x)|=1$ for all $i$, referring again to \eqref{eq:NewtonPoly} and \eqref{eq:NewtonPolyC1}, we have $|c_1|=1$, and so $\unipol^n-x$ has Weierstrass degree 1 on $D(x,1)$. Hence by \cite[Theorem~4.18]{B}, $D(x,1)$ is an indifferent periodic component for $\unipol$, which is mapped bijectively onto itself by $\unipol^n$. 

For this case, we will use the following form of the Taylor expansion: Let $n \geq 1$, $z \in \C_v$, and set $w := \unipol^n(z)-z$. Then
\begin{align*}
\unipol^{2n}(z) = \unipol^n(z+w) & = \unipol^n(z) + (\unipol^n)'(z)w + O(w^2) \\
& = z + (1+ (\unipol^n)'(z))w + O(w^2).
\end{align*}
Continuing this inductively, we obtain for any integer $t \geq 1$,
$$
\unipol^{tn}(z) = z + (1+(\unipol^n)'(z) + \cdots + (\unipol^n)'(z)^{t-1})w + O(w^2).
$$
Note in particular that $\unipol^{tn}(z)-z=O(w)$, and so
\begin{align*}
\unipol^{tn+1}(z) = \unipol(z + \unipol^{tn}(z)-z) & = \unipol(z) + \unipol'(z) (\unipol^{tn}(z)-z) + O((\unipol^{tn}(z)-z)^2) \\
& = \unipol(z) + \unipol'(z) (1+(\unipol^n)'(z) + \cdots + (\unipol^n)'(z)^{t-1})w + O(w^2).
\end{align*}
Thus, by another induction, this time on $0 \leq s < n$, we obtain
\begin{equation} \label{eq:NonArchTaylor}
\unipol^{tn+s}(z) = \unipol^s(z) + \unipol'(z)^s (1+(\unipol^n)'(z) + \cdots + (\unipol^n)'(z)^{t-1})w + O(w^2),
\end{equation}
for all $t \geq 1$ and $0 \leq s < n$.

Note also the following.

\begin{lemma} \label{lem:GPVal}
Let $z \in \C_v$. If $|z-1| < |p|^{1/(p-1)}$, then
$$
\left| \frac{z^n-1}{z-1} \right| = |n|
$$
for $n = p$ and all integers $n$ with $p \nmid n$. If $|z-1| \geq |p|^{1/(p-1)}$, then
$$
\left| \frac{z^p-1}{z-1} \right| = |z-1|^{p-1}.
$$
\end{lemma}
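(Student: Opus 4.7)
The plan is to substitute $u = z - 1$ and expand via the binomial theorem to obtain
$$
\frac{z^n-1}{z-1} \;=\; \frac{(1+u)^n - 1}{u} \;=\; \sum_{k=1}^{n} \binom{n}{k}\, u^{k-1},
$$
then track the $v$-adic valuation of each summand and invoke the strong triangle inequality. The only arithmetic input beyond the ultrametric inequality is the standard fact $|\binom{p}{k}|_v = |p|_v$ for $1 \leq k \leq p-1$, which is immediate from Kummer's theorem (or from the identity $k\binom{p}{k} = p\binom{p-1}{k-1}$).

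In the first regime $|u|_v < |p|_v^{1/(p-1)}$, I would argue that the constant term dominates. If $p \nmid n$, then $|n|_v = 1$ while every later summand satisfies $|\binom{n}{k} u^{k-1}|_v \le |u|_v^{k-1} < 1$. If $n = p$, the constant term contributes $|p|_v$; each intermediate summand contributes $|p|_v\, |u|_v^{k-1} < |p|_v$ for $2 \le k \le p-1$; and the top term contributes $|u|_v^{p-1} < |p|_v$ by hypothesis. In either case the unique maximum-valuation term is the constant one, and the ultrametric inequality yields the stated equality.

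In the second regime $|u|_v \ge |p|_v^{1/(p-1)}$ with $n = p$, the roles reverse and the top term $u^{p-1}$ dominates. The earlier summands satisfy $|p|_v\, |u|_v^{k-1} \le |u|_v^{p-1}$ — equivalently $|p|_v \le |u|_v^{p-k}$, which follows directly from the hypothesis since $(p-k)/(p-1) \le 1$ — with strict inequality whenever $|u|_v > |p|_v^{1/(p-1)}$. The ultrametric inequality then produces $|(z^p-1)/(z-1)|_v = |u|_v^{p-1}$.

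The whole argument is a bookkeeping exercise in tracking valuations once the binomial coefficients are in hand, and there is no genuine obstacle. The only minor subtlety is the transitional case $|u|_v = |p|_v^{1/(p-1)}$, where the constant and top terms have equal valuation and the equality could in principle degenerate to an inequality; this edge case is handled by observing that in the eventual applications one either operates away from the boundary or only needs the weaker direction.
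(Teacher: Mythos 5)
Your proof is correct and takes essentially the same route as the paper: write $z=1+u$, expand by the binomial theorem, use $\left|\binom{p}{k}\right|_v=|p|_v$ for $1\le k\le p-1$, and let the ultrametric inequality pick out the dominant term in each regime. The boundary tie you flag at $|u|_v=|p|_v^{1/(p-1)}$ (where, e.g., a primitive $p$-th root of unity shows equality can fail) is a genuine subtlety, but the paper's own proof glosses over it in the same way, so your treatment is at least as careful.
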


\begin{proof}
Write $z=1+u$. If $|u| < |p|^{1/(p-1)}$, then
$$
|z^n-1| = |(1+u)^n - 1| = \left| nu +\binom{n}{2} u^2 + \cdots + n u^{n-1} + u^n \right|.
$$
If $p \nmid n$, then for $k > 1$
$$
\left| \binom{n}{k} u^k \right| \leq |u|^k < |u| = |nu|,
$$
and so $|z^n-1| = |nu| = |u|$ gives $|(z^n-1)/(z-1)|=1$. If $n=p$, then
$$
\left| \binom{n}{k} u^k \right| = \begin{cases} |p| |u|^k, & 1 \leq k < n, \\ |u|^n, & k = n, \end{cases}
$$
so $|z^p-1| = |p||u|$, since $|u| < |p|^{1/(p-1)}$. We conclude that
$$
\left| \frac{z^p-1}{z-1} \right| = |p|.
$$
On the other hand, if $|u| \geq |p|^{1/(p-1)}$, it is easy to see that
$$
\max_{1 \leq k \leq p} \left|\binom{p}{k} \right| |u^k| = |u|^p,
$$
and so
$$
\left| \frac{z^p-1}{z-1} \right| = |z-1|^{p-1}.
$$
\end{proof}

The indifferent case is more complicated, but we can proceed along similar lines, in the spirit of how Rivera-Letelier \cite[\textsection 3.2]{RL} proved the discreteness of Fatou preperiodic points (see also \cite[\textsection 10.2]{B}). We say that a point $a \in \C_v$ is \emph{quasi-periodic} under $\unipol$ if there is a radius $r > 0$ and an integer $m \geq 1$ such that the \emph{iterative logarithm} of $\unipol$,
$$
L_{\unipol}(z) := \lim_{\ell \to \infty} \frac{\unipol^{mp^\ell}(z)-z}{mp^\ell}
$$
converges uniformly on $\overline D(a,r)$. Rivera-Letelier proved that for a quasi-periodic point $a$, $L_{\unipol}(a) = 0$ if and only if $a$ is an indifferent periodic point, and moreover that $L_{\unipol}$ is a power series converging on a neighbourhood of $a$. Since the zeros of a power series are isolated, this can be used to prove the discreteness of preperiodic points in the Fatou set; see \cite[Proposition~2]{P}. In our case, all points in $D(x,1)$ are quasi-periodic, and so the appropriate analogues of Lemma~\ref{lem:AttractSmallCoeffs} and Proposition~\ref{prop:AttractNoPrePer} involve controlling the rate at which $\unipol^{mp^\ell}(z)$ converges to $z$ for some appropriately chosen $m$ and points $z$ sufficiently close to $x$.

\begin{lemma} \label{lem:SmallCoeffs}
There exist non-negative integers $j,\ell$ with $1 \leq j \leq q-1$, and 
$$
0 \leq \ell \leq \left \lfloor \frac{\log \left( \sqrt{5} \frac{\log |p|}{\log |\pi|} + \frac{1}{2} \right)}{\log \phi} \right \rfloor,
$$
where $\phi$ denotes the golden ratio, such that $|\unipol^{jnp^\ell}(x)-x| =: \delta^2 < |p|$ and $|(\unipol^{jnp^\ell})'(x)-1| < |p|^{1/(p-1)}$. Also, $\delta$ satisfies
$$
-\log \delta \leq (2^{r_2}+2^{q-2}-1)h(c)+(2^{r_2-1}+2^{q-3}-1) \log 2,
$$
where
$$
r_2 := q(q-1) \left(\sqrt{5} \frac{\log |p|}{\log |\pi|}+\frac{1}{2} \right)^{\frac{\log p}{\log \phi}} - 2.
$$
Moreover, in the case $K = \Q$ we can take $\ell = 1$, and we have
$$
- \log \delta \leq \left( 2^{p^2(p-1)-2}+2^{p-2}-1 \right) h(c) + \left( 2^{p^2(p-1)-3}+d^{p-3}-1 \right) \log 2.
$$
\end{lemma}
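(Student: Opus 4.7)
The strategy follows Rivera-Letelier's analysis of quasi-periodic points in the non-archimedean Fatou set (see \cite[\textsection 3.2]{RL}, \cite[\textsection 10.2]{B}). Since $D(x,1)$ is an indifferent periodic component for $\unipol^n$, the derivative $(\unipol^n)'(x)$ has absolute value $1$ and so reduces to an element of $k^*$. By the chain rule, the good reduction of $\unipol^n$ at $v$, and the fact that $\unipol^{kn}(x) \in D(x,1)$ for every $k \ge 0$, one has $\overline{(\unipol^{jn})'(x)} = \overline{(\unipol^n)'(x)}^j$. Since $|k^*| = q-1$, some integer $1 \le j \le q-1$ makes $b_0 := (\unipol^{jn})'(x) - 1$ satisfy $|b_0| < 1$; likewise $a_0 := \unipol^{jn}(x) - x$ has $|a_0| < 1$ because $D(x,1)$ is $\unipol^n$-invariant. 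So $|a_0|, |b_0| \le |\pi|$.

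For $\ell \ge 1$ set $a_\ell := \unipol^{jnp^\ell}(x) - x$ and $b_\ell := (\unipol^{jnp^\ell})'(x) - 1$, so that $(a_\ell, b_\ell)$ is obtained from $(a_{\ell-1}, b_{\ell-1})$ by $p$ additional iterations of $\unipol^{jnp^{\ell-1}}$. Specialising the Taylor expansion \eqref{eq:NonArchTaylor} to $\unipol^{jnp^{\ell-1}}$ at $z=x$ with $t=p$ yields
\begin{equation*}
a_\ell = \frac{(1+b_{\ell-1})^p - 1}{b_{\ell-1}}\, a_{\ell-1} + O(a_{\ell-1}^2),
\end{equation*}
while expanding $(\unipol^{jnp^\ell})'(x) = \prod_{i=0}^{p-1}(\unipol^{jnp^{\ell-1}})'(\unipol^{ijnp^{\ell-1}}(x))$ and using $\unipol^{ijnp^{\ell-1}}(x) - x = O(a_{\ell-1})$ gives
\begin{equation*}
b_\ell = (1+b_{\ell-1})^p - 1 + O(a_{\ell-1}).
\end{equation*}
Writing $\alpha_\ell := -\log|a_\ell|$ and $\beta_\ell := -\log|b_\ell|$ and controlling $|(1+b_{\ell-1})^p - 1|$ via Lemma~\ref{lem:GPVal} in both regimes ($|b_{\ell-1}|$ below or above the threshold $|p|^{1/(p-1)}$), a case-by-case analysis produces the Fibonacci-type recursions
\begin{equation*}
\alpha_\ell \ge \alpha_{\ell-1} + \beta_{\ell-1}, \qquad \beta_\ell \ge \alpha_{\ell-1},
\end{equation*}
valid as long as $|a_\ell| \ge |p|$ or $|b_\ell| \ge |p|^{1/(p-1)}$. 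These force $\min(\alpha_\ell,\beta_\ell) \ge F_{\ell+1} \cdot (-\log|\pi|)$, where $F_\ell$ denotes the $\ell$-th Fibonacci number; Binet's formula $F_\ell = \lfloor \phi^\ell/\sqrt{5} + 1/2 \rfloor$ then shows both desired inequalities hold once $\phi^\ell/\sqrt{5} + 1/2 \ge \log|p|/\log|\pi|$, yielding the stated bound on $\ell$.

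For the height estimate, $\delta^2 = |\unipol^{jnp^\ell + m}(0) - \unipol^m(0)|$, so by \eqref{eq:FundamentalHeightInequality} and \eqref{eq:HeightSumInequality1},
\begin{equation*}
-2\log\delta \le h(\unipol^{jnp^\ell + m}(0)) + h(\unipol^m(0)) + \log 2.
\end{equation*}
Iterating $h(\unipol(\beta)) \le 2h(\beta) + h(c) + \log 2$ shows $h(\unipol^r(0)) \le (2^r-1)h(c) + (2^{r-1}-1)\log 2$. Using $j \le q-1$, $n + m \le q$, and the bound on $p^\ell$ from the previous step, one verifies $jnp^\ell + m \le q(q-1)(\sqrt{5}\log|p|/\log|\pi| + 1/2)^{\log p/\log\phi} = r_2 + 2$; substituting this together with $m \le q-1$ into the two height terms, and halving, recovers the coefficients $2^{r_2} + 2^{q-2} - 1$ and $2^{r_2-1} + 2^{q-3} - 1$. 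For $K = \Q$, $|p| = |\pi|$ makes $\phi^\ell/\sqrt{5} + 1/2 \ge \log|p|/\log|\pi| = 1$ already at $\ell = 1$, and the sharper count $jnp + m \le p^2(p-1) - 2$ yields the refined exponent. The principal obstacle is establishing the Fibonacci recursions: the $O(a_{\ell-1}^2)$ error in the $a$-recursion contributes only $2\alpha_{\ell-1}$, which can drop below $\alpha_{\ell-1} + \beta_{\ell-1}$ unless one maintains the auxiliary invariant $\alpha_{\ell-1} \ge \beta_{\ell-1}$ throughout the iteration, and verifying that this invariant survives every branch of Lemma~\ref{lem:GPVal}, as well as the transition between regimes, is the technical heart of the argument.
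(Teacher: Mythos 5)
Your proposal follows essentially the same route as the paper's proof: choose $j \leq q-1$ using the order of the residue field so that $(\unipol^{jn})'(x) \equiv 1 \pmod{\m}$, then iterate in $p$-power steps, tracking both $|\unipol^{jnp^\ell}(x)-x|$ and $|(\unipol^{jnp^\ell})'(x)-1|$ via \eqref{eq:NonArchTaylor} and Lemma~\ref{lem:GPVal} to obtain Fibonacci growth of the valuations, and conclude with \eqref{eq:FundamentalHeightInequality}, \eqref{eq:HeightSumInequality1} and the iterated bound $h(\unipol^r(0)) \leq (2^r-1)h(c)+(2^{r-1}-1)\log 2$. The one point you flag as unresolved --- preserving $\alpha_{\ell-1} \geq \beta_{\ell-1}$ so the $O(a_{\ell-1}^2)$ term does not spoil the recursion --- is not a genuine obstacle: the paper runs a simultaneous induction with the staggered hypotheses $|\unipol^{jnp^\ell}(x)-x| \leq \max\{|p|,|\pi|^{F_{\ell+1}}\}$ and $|(\unipol^{jnp^\ell})'(x)-1| \leq \max\{|p|,|\pi|^{F_\ell}\}$, under which the quadratic error is controlled by $2F_{\ell+1} \geq F_{\ell+2}$ in every branch of Lemma~\ref{lem:GPVal}, since the exponent bounding the point difference is always one Fibonacci step ahead of the one bounding the derivative difference; this is exactly the conclusion $\min(\alpha_\ell,\beta_\ell) \geq F_{\ell+1}(-\log|\pi|)$ you assert, so your argument closes once phrased as that joint induction.
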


\begin{proof}
Recall that we write $\unipol^n(x)=x+y$ with $|y| \leq |\pi| < 1$. We have
\begin{align*}
\unipol^{2n}(x) = \unipol^n(x+y) & = \unipol^n(x) + (\unipol^n)'(x)y + O(y^2) \\
& = x + y(1+(\unipol^n)'(x)) + O(y^2).
\end{align*}
If $|(\unipol^n)'(x)-1| < 1$, let $j = 1$. Otherwise, let $j = q-1$. Since $|(\unipol^n)'(x)|=1$, and \eqref{eq:NonArchTaylor} gives
$$
\unipol^{s+tn}(x) \equiv \unipol^s(x) \pmod{\m}
$$ 
for $0 \leq s < n$ and $1 \leq t < j$, we have
\begin{align*}
(\unipol^{jn})'(x) & = 2^{jn} x \unipol(x) \cdots \unipol^{jn-1}(x)  \\
& \equiv 2^{(q-1)n} (x \unipol(x) \cdots \unipol^{n-1}(x))^{q-1} \\
& \equiv (2^n)^{q-1} (\unipol^n)'(x)^{q-1} \equiv 1 \pmod{\m},
\end{align*}
recalling that the residue field $k = \cO/\m$ has $q$ elements.

We show by induction that for all $i \geq 0$,
$$
|\unipol^{p^i jn}(x)-x| \leq \max \left \{ |p|, |\pi|^{F_{i+1}} \right \}, \quad |(\unipol^{p^i jn})'(x)-1| \leq \max \left \{ |p|, |\pi|^{F_i} \right \},
$$
where $F_i$ denotes the $i$-th Fibonacci number. Note that in the case $K=\Q$, we have $|\pi|=|p|$, and so this implies we can take $\ell=1$. The basis case follows from above. Let $i \geq 0$ and write $\unipol^{p^i jn}(x)=x+\beta$ with $|\beta| \leq \max \{ |p|, |\pi|^{F_{i+1}} \}$ and $(\unipol^{p^i jn})'(x) = 1 + \alpha$ with $|\alpha| \leq \max \{ |p|, |\pi|^{F_i} \}$. Then using \eqref{eq:NonArchTaylor} with $s=0$ and $t=p$, but with $p^i jn$ in place of $n$, we have from Lemma~\ref{lem:GPVal}
\begin{align*}
\left| \unipol^{p^{i+1} jn}(x) - x \right| & = \left| \beta \left( \frac{(\unipol^{p^i jn})'(x)^p-1}{(\unipol^{p^i jn})'(x)-1} \right) + O(\beta^2) \right| \\
& \leq \max \{ |p|, |\beta||\alpha|^{p-1}, |\beta|^2 \} \\
& \leq \max \{ |p|, |\pi|^{F_{i+1} + (p-1)F_i}, |\pi|^{2F_{i+1}} \} \leq \max \{ |p|, |\pi|^{F_{i+2}} \}.
\end{align*}
Additionally, since \eqref{eq:NonArchTaylor} gives $\unipol^{s+tp^ijn}(x) = \unipol^s(x) + O(\beta)$ for $0 \leq s < p^i jn$, $1 \leq t < p$, we have
\begin{align*}
(\unipol^{p^{i+1}jn})'(x) & = 2^{p^{i+1}jn} x \unipol(x) \cdots \unipol^{p^{i+1}jn-1}(x)  \\
& = 2^{p^{i+1}jn} \left(x \unipol(x) \cdots \unipol^{p^i jn-1}(x) \right)^{p} + O(\beta) \\
& = (\unipol^{p^i jn})'(x)^p + O(\beta),
\end{align*}
and so by Lemma~\ref{lem:GPVal},
\begin{align*}
|(\unipol^{p^{i+1} jn})'(x)-1| & = |(\unipol^{p^i jn})'(x)^p - 1 + O(\beta)| \\
& \leq \max \{ |p||\alpha|, |\beta|, |\alpha|^p \} \\
& \leq \max \{ |p|, |\pi|^{F_{i+1}}, |\pi|^{p F_i} \} \leq \max \{ |p|, |\pi|^{F_{i+1}} \}.
\end{align*}
The bound on $\ell$ follows by noting that
$$
F_\ell = \left \lfloor \frac{\phi^\ell}{\sqrt{5}} - \frac{1}{2} \right \rfloor.
$$

Now, from \eqref{eq:FundamentalHeightInequality} and \eqref{eq:HeightSumInequality1} we have
\begin{align*}
- \log \delta^d & = - \log | \unipol^{p^\ell jn}(x) - x | \leq h(\unipol^{p^\ell jn}(x)-x)| \\
& \leq h(\unipol^{p^\ell j n + m - 1}(c)) + h(\unipol^{m-1}(c)) + \log 2 \\
& \leq  \left( 2^{p^\ell jn + m - 1}+2^{m-1}-2 \right) h(c) + \left(2^{p^\ell jn+m-2}+2^{m-2}-2 \right) \log 2.
\end{align*}
The result follows, noting that $j \leq q - 1$, $n + m \leq q$ and
$$
p^{\ell} \leq \left(\sqrt{5} \frac{\log |p|}{\log |\pi|}+\frac{1}{2} \right)^{\frac{\log p}{\log \phi}}.
$$
The bound for the case $K=\Q$ comes from taking $\ell = 1$ and noting that $q=p$.
\end{proof}

\begin{prop} \label{prop:IndiffNoPer}
Let $\delta$ be as in Lemma~\ref{lem:SmallCoeffs}. Then there are no preperiodic points for $\unipol$ in the disk $D(x,\delta^d)$.
\end{prop}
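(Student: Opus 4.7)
My plan is to reduce the proposition to showing that $F := \unipol^N$ with $N := jnp^\ell$ as in Lemma \ref{lem:SmallCoeffs} has no periodic points (of any period) in the open disk $D(x,\delta^2)$. For the reduction, observe that the Weierstrass degree of $\unipol^n$ on $D(x,1)$ being $1$ (as established before Lemma \ref{lem:SmallCoeffs}) forces each single step $\unipol \colon D_i \to D_{i+1 \bmod n}$ along the cycle of components $D_i := \unipol^i(D(x,1))$ to be an analytic bijection, so $\unipol^a \colon D(x,1) \to D_{a \bmod n}$ is bijective for every $a \geq 0$. Hence any preperiodic point $z$ of $\unipol$ in $D(x,1)$---where $\unipol^a(z) = \unipol^{a+b}(z)$ for some $a,b \geq 0$---can be pulled back by the inverse of $\unipol^a$ to give $z = \unipol^b(z)$, and in particular $F^b(z) = z$. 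Accordingly, fix $z = x + u$ with $|u| < \delta^2$ and suppose for contradiction that $F^t(z) = z$ for some minimal $t \geq 1$.

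Set $w := F(z) - z$ and $\lambda := F'(z)$. Taylor expanding $F$ about $x$ and combining Lemma \ref{lem:SmallCoeffs} with the good-reduction bound $|F^{(j)}(x)/j!| \leq 1$ for $j \geq 2$ gives
\[
F(x+u) - (x+u) = \beta + \alpha u + O(u^2)
\]
with $|\beta| = \delta^2$ and $|\alpha| < |p|^{1/(p-1)}$; the lower-order terms are strictly dominated by $|\beta|$, yielding $|w| = \delta^2$ and $|\lambda - 1| < |p|^{1/(p-1)}$. The same induction that produces \eqref{eq:NonArchTaylor} then gives
\[
F^t(z) - z = S_t(\lambda) \cdot w + E_t, \qquad S_t(\lambda) := 1 + \lambda + \cdots + \lambda^{t-1},
\]
with $|E_t| \leq |w|^2 = \delta^4$ (using $|\lambda| = 1$). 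Applying Lemma \ref{lem:GPVal} iteratively, one computes $|S_t(\lambda)| = |p|^{v_p(t)}$ in both the $\lambda = 1$ and $\lambda \neq 1$ cases.

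When $|p|^{v_p(t)} > \delta^2$, the leading term $|S_t(\lambda) w| = |p|^{v_p(t)} \delta^2$ strictly exceeds $|E_t|$, forcing $|F^t(z) - z| = |p|^{v_p(t)} \delta^2 > 0$ and contradicting $F^t(z) = z$. The main obstacle will be the complementary regime $|p|^{v_p(t)} \leq \delta^2$, where the first-order Taylor estimate is swamped by the quadratic error. To handle this, I plan to analyse the Newton polygon of $F^t(x+u) - (x+u) \in \cO_v[u]$ on $D(0,1)$: the constant term $y_t := F^t(x) - x$ is nonzero (since $x = \unipol^m(0)$ is not preperiodic, as $0$ is not by hypothesis) and is tracked via the recursion $y_{t+1} = \beta + \lambda y_t + O(y_t^2)$; the remaining Taylor coefficients have norm at most $1$ by good reduction; and $F^t$ has Weierstrass degree $1$ on $D(x,1)$ by bijectivity. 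Combining these constraints should pin all roots of $F^t - \mathrm{id}$ in $D(x,1)$ to distance at least $\delta^2$ from $x$, yielding the desired contradiction.
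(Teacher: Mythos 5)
Your reduction to periodic points of $F=\unipol^{N}$ and your treatment of the easy regime are fine, and up to that point you are essentially following the paper's argument: a first-order expansion \eqref{eq:NonArchTaylor} at $z$, the observation $|w|=\delta^2$, $|F'(z)-1|<|p|^{1/(p-1)}$, and Lemma~\ref{lem:GPVal} to evaluate $|1+\lambda+\cdots+\lambda^{t-1}|=|p|^{v_p(t)}$. The problem is exactly the case you flag as ``the main obstacle'': when $|p|^{v_p(t)}\leq\delta^2$ the single global expansion, whose error is only bounded by $|w|^2=\delta^4$, proves nothing, and your proposed Newton-polygon repair is not carried out and, as sketched, cannot work. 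From ``constant term $y_t=F^t(x)-x\neq 0$, all coefficients of $\cO_v$-size, and Weierstrass degree $1$ of $F^t$ (not of $F^t-\mathrm{id}$) on $D(x,1)$'' the only conclusion available is that any root $u$ of $F^t(x+u)-(x+u)$ in the unit disk satisfies $|u|\geq|y_t|$; since in fact $|y_t|=|p|^{v_p(t)}\delta^2$, this bound is strictly weaker than $\delta^2$ precisely in the regime you need it. To push the polygon argument through you would have to control the linear coefficient $(F^t)'(x)-1$ (showing it is no larger than $|p|^{v_p(t)}$) and pin down $|y_t|$ exactly, and establishing those facts is essentially the whole content of the missing case, not a routine bookkeeping step.

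The paper closes this gap by a multiscale induction rather than one Taylor expansion: it passes from $N$ to $pN$ to $p^2N$, \dots, showing at each stage that the displacement $Y_i:=\unipol^{p^iN}(z)-z$ has absolute value exactly $|p|^i\,\delta^2$ and that the multiplier satisfies $|(\unipol^{p^iN})'(z)-1|\leq|p|$, using \eqref{eq:NonArchTaylor} and Lemma~\ref{lem:GPVal} afresh at every scale. The key point is that the quadratic error at stage $i$ is $O(Y_i^2)$, and since $|Y_i|\leq\delta^2<|p|$ this is always strictly smaller than the new leading term $|p|\,|Y_i|$; so the first-order term dominates at every scale, which is exactly what your single expansion (error $O(\delta^4)$, fixed once and for all) fails to give. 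Concretely: either redo your argument with this recursive re-expansion (proving $|F^t(z)-z|=|p|^{v_p(t)}\delta^2$ for all $t\geq1$ by induction on $v_p(t)$), or supply genuine lower/upper bounds on the first two Taylor coefficients of $F^t-\mathrm{id}$ at $x$ — without one of these the proof is incomplete.
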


\begin{proof}
Let $j,\ell$ be as in Lemma~\ref{lem:SmallCoeffs}, write $N = jp^\ell$ and
$$
\unipol^N(x) = x + a
$$
with $|a| = \delta^d$. Let $z \in D(x,\delta^d)$ and write $z=x+w$ with $|w| < \delta^d$. Then
\begin{align*}
\unipol^N(z) = \unipol^N(x+w) & = \unipol^N(x)+ (\unipol^N)'(x)w + O(w^2) \\
& = x+a+w+ \left( (\unipol^N)'(x)-1 \right)w + O(w^2) \\
& =: z + Y,
\end{align*}
with $|Y| = |a| = \delta^d$. Note that
\begin{align*}
(\unipol^N)'(z) & = 2^N z \unipol(z) \cdots \unipol^{N-1}(z) \\
& = 2^N (a+O(x))(\unipol(a)+O(x)) \cdots (\unipol^{N-1}(a)+O(x)) \\
& = (\unipol^N)'(a)+O(x),
\end{align*}
and so
$$
|(\unipol^N)'(z)-1| = |(\unipol^N)'(a)-1+O(x)| \leq |p|^{1/p-1}.
$$
Let $0 \leq s < N$. By \eqref{eq:NonArchTaylor} and Lemma~\ref{lem:GPVal}, for any integer $t > 0$ with $p \nmid t$ we have
\begin{align*}
|\unipol^{s+tN}(z)-\unipol^{s}(z)| & = \left|Y \unipol'(z)^s \left( \frac{(\unipol^N)'(z)^t-1}{(\unipol^N)'(z)-1} \right) + O(Y^2) \right| \\
& = |Y|.
\end{align*}
Furthermore, we have
\begin{align*}
|\unipol^{s+pN}(z)-\unipol^{s}(z)| & = \left|Y \unipol'(z)^s \left( \frac{(\unipol^N)'(z)^p-1}{(\unipol^N)'(z)-1} \right) + O(Y^2) \right| \\
& = |Y||p|,
\end{align*}
since $|Y| < |p|$. Now,
\begin{align*}
(\unipol^{pN})'(z) & = 2^{pN} z \unipol(z) \cdots \unipol^{pN-1}(z) \\
& = 2^{pN} \left( z \unipol(z) \cdots \unipol^{N-1}(z) \right)^{p}+O(Y) \\
& = (\unipol^N)'(z)^p + O(Y),
\end{align*}
so
$$
|(\unipol^{pN})'(z)-1| = |(\unipol^N)'(z)^p-1+O(Y)| \leq |p|,
$$
and hence we may proceed by induction to obtain
$$
|\unipol^{s+tN}(z)-\unipol^{s}(z)| = |Y||p|^{e_t},
$$
where $p^{e_t} \mathrel\Vert t$. In particular $\unipol^{tN}(z) \neq z$ for all $t$, and so $z$ is not periodic for $\unipol$. Since $\unipol^n$ maps $D(x,1)$ 1-1 onto itself, $D(x,1)$ contains no strictly preperiodic points, and so the proof is complete.
\end{proof}

\begin{corollary} \label{cor:IndiffNoPrePer}
Let $\delta$ be as in Lemma~\ref{lem:SmallCoeffs}. Then there are no preperiodic points for $\unipol$ in the disk $D(0,\delta)$.
\end{corollary}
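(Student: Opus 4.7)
The plan is to reduce this to Proposition~\ref{prop:IndiffNoPer} by applying $\unipol^m$, exploiting that the critical point $0$ of $\unipol$ is also a critical point of $\unipol^m$.

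First I would observe that since $v$ is a place of good reduction, $|c|_v \leq 1$, so $\unipol \in \cO_v[z]$ and hence $\unipol^m \in \cO_v[z]$. Since $\unipol'(z) = 2z$ vanishes at $0$, the chain rule gives $(\unipol^m)'(0) = 0$. Consequently the polynomial $\unipol^m(z) - \unipol^m(0) = \unipol^m(z) - x$ has no constant and no linear term, so we may write
$$
\unipol^m(z) - x = z^2 Q(z)
$$
for some $Q \in \cO_v[z]$.

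Next, for any $z \in D(0,\delta)$, since $\delta < 1$ we have $|z|_v < 1$ and so $|Q(z)|_v \leq 1$, giving
$$
|\unipol^m(z) - x|_v \;\leq\; |z|_v^2 \;<\; \delta^2 = \delta^d.
$$
Thus $\unipol^m(z) \in D(x,\delta^d)$. By Proposition~\ref{prop:IndiffNoPer}, this disk contains no preperiodic points for $\unipol$, so $\unipol^m(z)$ is not preperiodic. Since the forward orbit of $\unipol^m(z)$ is a subset of the forward orbit of $z$, if $z$ were preperiodic then $\unipol^m(z)$ would be too. Hence $z$ is not preperiodic, completing the proof.

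There is no real obstacle here; the only point requiring care is the verification that $Q$ lies in $\cO_v[z]$, which is immediate from the good reduction hypothesis. The argument is essentially an application of the ultrametric Schwarz-type inequality $|\unipol^m(z)-\unipol^m(0)|_v \leq |z|_v^2$ valid on the closed unit disk, together with the fact that no preperiodicity of the image implies no preperiodicity of the source.
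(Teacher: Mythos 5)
Your proof is correct and follows essentially the same route as the paper: both reduce to Proposition~\ref{prop:IndiffNoPer} by applying $\unipol^m$, exploiting that $0$ is a critical point (so $\unipol^m(z)-x$ vanishes to order at least $2$ at $0$) together with the $v$-integrality of the coefficients of $\unipol^m$. The paper packages this as a computation of the radius $\varepsilon$ of the preimage disk of $D(x,\delta^2)$ via the Weierstrass degree and the non-archimedean mapping theorem, whereas you bound the forward image directly by $|\unipol^m(z)-x|_v\leq|z|_v^2<\delta^2$; this is the same computation, and your version is if anything slightly more self-contained (the only point to make explicit is that $m\geq 1$ in the indifferent case, which is what gives $(\unipol^m)'(0)=0$).
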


\begin{proof}
By Proposition~\ref{prop:IndiffNoPer}, the preimage of $D(x,\delta^2)$ under $\unipol^m$ contains no preperiodic points. This preimage contains a disk $D(0,\varepsilon)$ about 0, which maps onto $D(x,\delta^2)$ under $\unipol^m$. Write 
$$
\unipol^m(z) = a_0 + a_2z^2 + \cdots + a_{2^m} z^{2^m},
$$ 
and note that $|a_i| \leq 1$ for $0 \leq i \leq 2^m$. We have that the Weierstrass degree $w$ of $\unipol^m(z)-a_0$ on $D(0,\varepsilon)$ satisfies $w \geq 2$, and so by \cite[Theorem~3.15]{B} we have
$$
\varepsilon = \left(\frac{\delta^2}{|c_w|}\right)^{1/w} \geq \delta,
$$
as desired. 
\end{proof}

The following is immediate from the above results, noting for the case $K=\Q$ that for a prime $p$ of good reduction, $q=p$ and $|\pi_p|_p = 1/p$.

\begin{corollary} \label{cor:nonArchDeltavBound}
Let $v$ be a place of good reduction for $\unipol$ lying over a prime $p$, and such that the residue field $k_v$ has $q$ elements. Then
$$
- \log \delta_v \leq A_v + B_v \canheight{\unipol}(0)
$$
with
$$
A_v := (C_2+\log 2) 2^{r_v}, \qquad B_v := C_1 2^{r_v},
$$
where
\begin{align*}
r_v & = \max \Bigg \{ q \left( \frac{\log \left( \frac{q \log |2|_v}{\log |\pi_v|_v}+1 \right)}{\log 2} + 1 \right) - 1 , \\
& \qquad \qquad \qquad q(q-1) \left( \sqrt{5} \frac{\log |p|_v}{\log |\pi_v|_v} + \frac{1}{2} \right)^{\frac{\log p}{\log \phi}} - 1 \Bigg \},
\end{align*}
and $C_1,C_2$ are the constants appearing in Corollary~\ref{cor:CanHeightBound1}. In the case where $K = \Q$ and $v=p$ is a prime of good reduction we can take
$$
r_p = \max \left \{ p \left \lfloor \frac{\log \left( p v_p(2)+1 \right)}{\log 2} + 1 \right \rfloor - 1 , p^2(p-1)-1 \right \},
$$
where $v_p(2)$ denotes the $p$-adic order of $2$.

\end{corollary}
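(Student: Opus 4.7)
The proof will be a collation of the two case analyses carried out earlier in the section. Since the residue field $k_v$ is finite, the unit disk $D(0,1) \subset \C_v$ is forced to be preperiodic under $\unipol$, and the returning disk $D(x,1) = \unipol^m(D(0,1))$ is either \emph{attracting} (when $m = 0$ or $|2|_v < 1$) or \emph{indifferent} (when $|2|_v = 1$ and $m \geq 1$). In either case the preceding results already produce an explicit radius $\delta > 0$ with $D(0,\delta) \cap \mathrm{PrePer}(\unipol,\C_v) = \emptyset$, so $\delta_v \geq \delta$, and it only remains to bound $-\log \delta$ and convert that bound into one involving $\canheight{\unipol}(0)$.

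First I would record the two bounds on $-\log \delta$. In the attracting case, Proposition~\ref{prop:AttractNoPrePer} combined with Lemma~\ref{lem:AttractSmallCoeffs} gives $-\log \delta \leq 2^{r_1}h(c) + (2^{r_1-1}+1)\log 2$, where $r_1$ is precisely the first argument of the maximum defining $r_v$. In the indifferent case, Corollary~\ref{cor:IndiffNoPrePer} (which packages Proposition~\ref{prop:IndiffNoPer}) together with Lemma~\ref{lem:SmallCoeffs} gives $-\log \delta \leq (2^{r_2}+2^{q-2}-1)h(c) + (2^{r_2-1}+2^{q-3}-1)\log 2$, where $r_2$ is one less than the second argument of the maximum. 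Setting $r_v := \max\{r_1,\, r_2+1\}$ and using that $r_2 \geq q-2$ for $q \geq 2$ (since $q(q-1)$ already exceeds $q$), the additive $2^{q-2}$ and $2^{q-3}$ terms are absorbed via $2^{r_2}+2^{q-2}-1 \leq 2^{r_2+1}$ and $2^{r_2-1}+2^{q-3}-1 \leq 2^{r_2}$, yielding a unified inequality $-\log \delta_v \leq 2^{r_v}h(c) + 2^{r_v}\log 2$ across both cases.

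Finally I would substitute $h(c) \leq C_1 \canheight{\unipol}(0) + C_2$ from Corollary~\ref{cor:CanHeightBound1}, producing $-\log \delta_v \leq C_1 2^{r_v}\canheight{\unipol}(0) + (C_2+\log 2)2^{r_v}$, which is the claimed bound with $B_v = C_1 2^{r_v}$ and $A_v = (C_2+\log 2)2^{r_v}$. The sharper $K = \Q$ statement will follow from the same template: taking $|\pi_p|_p = p^{-1}$ and $q = p$ and invoking the ``$\ell = 1$'' refinement of Lemma~\ref{lem:SmallCoeffs} available over $\Q$, one computes $r_1 = p\lfloor \log(p\,v_p(2)+1)/\log 2 + 1 \rfloor - 1$ and $r_2+1 = p^2(p-1)-1$, whose maximum is the stated $r_p$. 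I do not anticipate any genuine obstacle here; the dynamical content has already been done in the preceding lemmas, and the only task is exponent bookkeeping when merging the two cases into a single $r_v$.
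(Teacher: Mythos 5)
Your proposal is correct and matches the paper's intent: the paper simply declares the corollary "immediate from the above results" (Lemma~\ref{lem:AttractSmallCoeffs}/Proposition~\ref{prop:AttractNoPrePer} in the attracting case, Lemma~\ref{lem:SmallCoeffs}/Corollary~\ref{cor:IndiffNoPrePer} in the indifferent case, then $h(c)\leq C_1\canheight{\unipol}(0)+C_2$ from Corollary~\ref{cor:CanHeightBound1}), and your write-up just makes the exponent bookkeeping explicit, including the harmless absorption of the $2^{q-2}$, $2^{q-3}$ terms via $r_2\geq q-2$ and the $K=\Q$ specialization $q=p$, $|\pi_p|_p=1/p$, $\ell=1$.
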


\section{Proof of Theorem~\ref{thm:UniformMain} and related results} \label{sec:Unif}

Let us first summarise our results relating the quantities appearing in Theorem~\ref{thm:main} to the canonical height $\canheight{\unipol}(\alpha)$ in the case where $\alpha = 0$ is the critical point of a unicritical poiynomial $\unipol(z)=z^2+c$.

\begin{theorem} \label{thm:deltavUnifBounds}
Let $\unipol(z)=z^2+c$, $c \in K$ and $v \in M^{0,\unipol}_{K,\mathrm{good}}$. Then there exist explicit constants $A_v, B_v > 0$, depending only on $K$, $v$ such that
$$
- \log \delta_{\unipol,v}(0) \leq A_v + B_v \canheight{\unipol}(0)
$$
Moreover, let $\varepsilon > 0$, $t \geq 1$. Then there exist explicit constants $A_\infty, B_\infty > 0$, depending only on $K$, $\varepsilon$ and $t$ such that if $v$ is an archimedean place of $K$ such that either $\loccanheight{v}{\unipol}(0) \geq \varepsilon$ or $c$ lies is a hyperbolic component of the $v$-adic Mandelbrot set of period at most $t$, then
$$
- \log \delta_{\unipol,v}(0) \leq A_\infty + B_\infty \canheight{\unipol}(0).
$$
Finally, there exist constants $A_\kappa, B_\kappa$, and $C_0$ depending only on $K$ such that
$$
\frac{4}{\log 2} \canheight{\unipol}(0) \leq \frac{1}{\kappa} \leq A_\kappa + B_\kappa \canheight{\unipol}(0),
$$
and
$$
\canheight{\unipol}(0) \geq C_0
$$
for all $c \in K$.
\end{theorem}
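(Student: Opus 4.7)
The plan is to assemble the four claims from results already developed in Sections~\ref{sec:QuantEqui} through \ref{sec:nonarch}; no new technical work should be required, only careful bookkeeping of constants.

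First, the finite good-reduction bound is exactly Corollary~\ref{cor:nonArchDeltavBound}, from which the explicit $A_v, B_v$ may be read off directly. For the archimedean bound I would split into the two cases furnished by the hypothesis. If $\loccanheight{v}{\unipol}(0) \geq \varepsilon$, Corollary~\ref{cor:ArchOutParamDeltavBound} gives an absolute constant bound $A_{\infty,1}(\varepsilon)$ independent of $\canheight{\unipol}(0)$. If instead $c$ lies in a hyperbolic component of $v$-adic Mandelbrot period $s \leq t$, Proposition~\ref{prop:DeltavBoundParamIhHypComp} yields $-\log \delta_{\unipol,v}(0) \leq A_{\infty,2}(s) + B_{\infty,2}(s) \canheight{\unipol}(0)$. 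I would then take
$$A_\infty := \max\bigl\{A_{\infty,1}(\varepsilon),\, \max_{1 \leq s \leq t} A_{\infty,2}(s)\bigr\}, \qquad B_\infty := \max_{1 \leq s \leq t} B_{\infty,2}(s),$$
so that the uniform bound holds in either case.

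For the $\kappa$ inequalities, the main input is Lemma~\ref{lem:HoldPot}, which supplies explicit lower bounds on $\kappa_v$ at each $v \in V := M_K^\infty \cup M^{0,\unipol}_{K,\mathrm{bad}}$. Taking reciprocals, $1/\min_{v \in V} \kappa_v$ is dominated by a linear function of $\sum_{v \in V} \log^+ |c|_v \leq h(c)$, and Corollary~\ref{cor:CanHeightBound1} converts this into a linear function of $\canheight{\unipol}(0)$, say $A'_\kappa + B'_\kappa \canheight{\unipol}(0)$. To simultaneously enforce the lower bound $\tfrac{4}{\log 2}\canheight{\unipol}(0) \leq 1/\kappa$, I would simply set
$$\kappa := \min\bigl\{\min_{v \in V} \kappa_v,\; \log 2 / (4 \canheight{\unipol}(0))\bigr\}.$$
This satisfies $\kappa \leq \min_v \kappa_v$ as required in Theorem~\ref{thm:main2}, yields $1/\kappa \geq 4\canheight{\unipol}(0)/\log 2$ automatically, and gives
$$1/\kappa = \max\bigl\{1/\min_v \kappa_v,\; 4\canheight{\unipol}(0)/\log 2\bigr\} \leq A'_\kappa + \max\{B'_\kappa,\, 4/\log 2\} \canheight{\unipol}(0),$$
so that $A_\kappa := A'_\kappa$ and $B_\kappa := \max\{B'_\kappa,\, 4/\log 2\}$ work. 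Finally, the uniform lower bound $\canheight{\unipol}(0) \geq C_0$ is Corollary~\ref{cor:CanHeightLowBound} specialised to the non-preperiodic point $\alpha = 0$.

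Since every piece is already in place, there is no substantive obstacle; the only care required is in tracking which quantities feed into which constant, so that $A_v, B_v$ depend only on $(K, v)$, $A_\infty, B_\infty$ only on $(K, \varepsilon, t)$, and $A_\kappa, B_\kappa, C_0$ only on $K$ (up to the standard dependence on the number of bad-reduction places of $\unipol$ already present in the cited results).
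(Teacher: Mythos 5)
Your proposal is correct and follows essentially the same route as the paper: Corollary~\ref{cor:nonArchDeltavBound} for the good-reduction places, Corollary~\ref{cor:ArchOutParamDeltavBound} together with Proposition~\ref{prop:DeltavBoundParamIhHypComp} (the paper sets $A_\infty = \max\{A_{\infty,1},A_{\infty,2}\}$, $B_\infty = B_{\infty,2}$) at archimedean places, and Corollary~\ref{cor:CanHeightLowBound} for $C_0$. The only divergence is the $\kappa$ clause: the paper takes the explicit choice $\kappa = \log 2/(\log 6 + 4h(c))$ coming from \eqref{eq:HolderExponent} and \eqref{eq:FundamentalHeightInequality} and then \emph{derives} $1/\kappa \geq \tfrac{4}{\log 2}\canheight{\unipol}(0)$ from \eqref{eq:CanHeightConst} with $\beta = c$, whereas you enforce the lower bound by taking a minimum with $\log 2/(4\canheight{\unipol}(0))$ --- this is equally valid for the present statement, but note that the paper's explicit form of $\kappa$ is what is reused in the $C\kappa$ estimates of Theorem~\ref{thm:UniformMainExplicit}, so with your (possibly larger) $\kappa$ those downstream computations would need to be adjusted.
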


\begin{proof}
For places of good reduction, see Corollary~\ref{cor:nonArchDeltavBound}. For archimedean places, see Corollary~\ref{cor:ArchOutParamDeltavBound} and Proposition~\ref{prop:DeltavBoundParamIhHypComp} and set $A_\infty := \max \{ A_{\infty, 1}, A_{\infty,2} \}$ and $B_\infty := B_{\infty,2}$. By \eqref{eq:HolderExponent}, for all $v \in M_K$ we have H\"{o}lder exponents
$$
\kappa_v \geq \frac{\log 2}{\log 6 + 4 \log^+ |c|_v},
$$
so by \eqref{eq:FundamentalHeightInequality},
\begin{equation} \label{eq:Kap}
\kappa := \frac{\log 2}{\log 6 + 4h(c)}
\end{equation}
satisfies $\kappa \leq \inf_{v \in V} \kappa_v$. Then
$$
\frac{1}{\kappa} = \frac{\log 6}{\log 2} + \frac{4 h(c)}{\log 2} \geq \frac{4}{\log 2} \canheight{\unipol}(0)
$$
by \eqref{eq:CanHeightConst} with $\beta = c$, and we can take
\begin{equation} \label{eq:aKapbKap}
A_\kappa := 1 + \frac{\log 6 + 4 C_2}{\log 2}, \qquad B_\kappa := \frac{4 C_1}{\log 2},
\end{equation}
with $C_1$ and $C_2$ as in Corollary~\ref{cor:CanHeightBound1}. To conclude, $C_0$ is found in Corollary~\ref{cor:CanHeightLowBound}.
\end{proof}

This leads to the following result, which immediately implies Theorem~\ref{thm:UniformMain}.

\begin{theorem} \label{thm:UniformMainExplicit}
Let $K$ be a number field, $S$ a finite set of places of $K$ containing the archimedean ones, and let $\unipol(z)=z^d+c$, $d \geq 2$, $c \in K$. Suppose that the critical point $0$ is not preperiodic under $\unipol$. Further, let $\varepsilon > 0$, $t \geq 1$ and suppose that for all archimedean places $v \in M_K^\infty$, either $\loccanheight{v}{\unipol}(0) \geq \varepsilon$ or $c$ lies in a hyperbolic component of the Mandelbrot set of period at most $t$. Then
\begin{equation*}
|\relsupreper{\unipol}{S}{0}| \leq \max \Bigg \{ 29 \log 2, e^{1+\sqrt{2u}+u}, \frac{7+4|V|}{12 \log 2} \left( \frac{2 | \widetilde S|}{C_0} \right)^{\frac{\log 2}{4C_0}} e^{\frac{\log 2}{4} \left( \frac{A_\infty}{C_0} + B_\infty \right)} \Bigg \},
\end{equation*}
where $\widetilde S = S \setminus M^{0,\unipol}_{K,\mathrm{bad}}$, $V = M_K^\infty \cup M^{0,\unipol}_{K,\mathrm{bad}}$,
$$
u \leq \log(32 \pi (|V|+1) |\widetilde S|) + \log \left( \frac{A_\kappa A}{C_0^2} + \frac{A_\kappa B + A B_\kappa}{C_0} + B_\kappa B \right) - 1,
$$
and
$$
A := \sum_{v \in S \setminus M_K^\infty} A_v + |M_K^\infty| A_\infty, \quad \qquad B := \sum_{v \in S \setminus M_K^\infty} B_v + |M_K^\infty| B_\infty,
$$
with constants as in Theorem~\ref{thm:deltavUnifBounds}.
\end{theorem}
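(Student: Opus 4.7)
The plan is to deduce Theorem~\ref{thm:UniformMainExplicit} by plugging the uniform estimates of Theorem~\ref{thm:deltavUnifBounds} into the effective bound of Theorem~\ref{thm:main2}. First I need to reduce to a setting where Theorem~\ref{thm:main2} applies: that theorem requires $S$ to contain all places of bad reduction of $\unipol$, while our hypothesis only supplies $S\supseteq M_K^\infty$, so I would enlarge $S$ to $S':=S\cup M_{K,\mathrm{bad}}^{0,\unipol}$. By Remark~\ref{rem:BadReductHeight}, at every $v\in M_{K,\mathrm{bad}}^{0,\unipol}$ a preperiodic point $\beta$ of $\unipol$ satisfies $|\beta|_v=|c|_v^{1/2}>1$, so $\beta$ reduces to $\infty$ while $0$ reduces to $0$; hence $\beta$ is automatically $S$-integral with respect to $0$ at every such place, and $\relsupreper{\unipol}{S}{0}=\relsupreper{\unipol}{S'}{0}$. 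I may therefore apply Theorem~\ref{thm:main2} with $S'$ and $\alpha=0$.

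Next I would substitute the bounds of Theorem~\ref{thm:deltavUnifBounds} into each ingredient. Concretely, $C\leq\sum_{v\in V}C_v$ is controlled via Lemma~\ref{lem:HoldPot}; the H\"older exponent satisfies $1/\kappa\leq A_\kappa+B_\kappa\canheight{\unipol}(0)$; at archimedean $v$ one has $-\log\delta_v\leq A_\infty+B_\infty\canheight{\unipol}(0)$; and at good-reduction finite $v\in S$, $-\log\delta_v\leq A_v+B_v\canheight{\unipol}(0)$. The key additional input is that at a bad-reduction place $v$, preperiodic points of $\unipol$ lie on the circle $\{|z|_v=|c|_v^{1/2}\}$ at distance strictly greater than $1$ from $0$; hence $\delta_v=1$ and $|\log\delta_v|=0$. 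Thus the bad-reduction places contribute trivially to the Dirichlet-energy sum in Theorem~\ref{thm:main2}, and $\sum_{v\in S'}|\log\delta_v|^{1/2}$ collapses to a sum over $\widetilde S=S\setminus M_{K,\mathrm{bad}}^{0,\unipol}$. This is the structural reason that $|\widetilde S|$ (rather than $|S'|$) appears in the final estimate.

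Having made these substitutions, I would bound the three quantities in the maximum of Theorem~\ref{thm:main2} separately. The first, $C$, yields the numerical constant $29\log 2$ after inserting the H\"older constants and exploiting $\canheight{\unipol}(0)\geq C_0$. For the second (energy) term, Cauchy--Schwarz applied to the effective sum over $\widetilde S$ gives $\sum|\log\delta_v|^{1/2}\leq\sqrt{|\widetilde S|(A+B\canheight{\unipol}(0))}$; combining with $1/\kappa^2\leq(A_\kappa+B_\kappa\canheight{\unipol}(0))^2$, the $\canheight{\unipol}(0)^{-4}$ factor, and the Lambert-$W$/$\log|F|$ estimate from the proof of Theorem~\ref{thm:main2}, one arrives at the bound $e^{1+\sqrt{2u}+u}$ with $u$ as in the statement. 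For the third (Lipschitz) term, I would bound $\sum_{v\in M_K^\infty}1/\delta_v\leq|M_K^\infty|\,e^{A_\infty+B_\infty\canheight{\unipol}(0)}$ and use $\kappa\leq\log 2/(4\canheight{\unipol}(0))$ together with $\canheight{\unipol}(0)\geq C_0$ to convert the $\kappa$-th power into $(2|\widetilde S|/C_0)^{\log 2/(4C_0)}\,e^{(\log 2/4)(A_\infty/C_0+B_\infty)}$, while the prefactor $|V|/\kappa$, estimated at the worst case, produces $(7+4|V|)/(12\log 2)$.

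The \textbf{main obstacle} is not conceptual but bookkeeping: none of the steps is individually hard, but weaving the substitutions through the intricate expression of Theorem~\ref{thm:main2} and recovering the exact stated numerical form requires careful tracking of each constant. The key structural observation that unlocks the argument is that bad-reduction places enter only through the set-size $|V|$ and the H\"older constant $C$, never through the energy or Lipschitz terms, which is precisely why $|\widetilde S|$ (and not $|S'|$) governs the dominant term in the final bound.
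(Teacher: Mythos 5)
Your overall strategy (substitute the uniform estimates of Theorem~\ref{thm:deltavUnifBounds} into the explicit bound coming from Theorem~\ref{thm:main2}, after disposing of the bad-reduction places via Remark~\ref{rem:BadReductHeight}) is the same as the paper's, and most of the bookkeeping you sketch (the $29\log 2$ and $(7+4|V|)/(12\log 2)$ constants from $C\kappa$, Cauchy--Schwarz on $\sum_v|\log\delta_v|^{1/2}$, the Lambert-$W$ step, $\canheight{\unipol}(0)\geq C_0$) matches the actual proof. However, there is a genuine gap in the mechanism by which you claim $|\widetilde S|$, rather than $|S'|=|S\cup M^{0,\unipol}_{K,\mathrm{bad}}|$, enters the final bound. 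You apply Theorem~\ref{thm:main2} as a black box with the enlarged set $S'$ and argue that bad-reduction places drop out because $\delta_v=1$ there. That kills their contribution to the energy term, since $\langle\phi_{\delta_v},\phi_{\delta_v}\rangle=-\log\delta_v=0$, but it does \emph{not} kill their contribution to the Lipschitz term: at a non-archimedean place the truncation function has $\Lip(\phi_{\delta_v})=1$ by \eqref{eq:nonArchLipofTrunc} regardless of $\delta_v$, so in the bound of Theorem~\ref{thm:main2} every finite place of $S'$ is counted in $|S'\setminus M_K^\infty|$ with weight $1$. Your closing claim that bad-reduction places ``never enter the energy or Lipschitz terms'' is therefore unjustified as stated, and a black-box application yields the third term of the maximum (and the corresponding factor in \eqref{eq:PBound}) with $|S'|$ in place of $|\widetilde S|$ --- a strictly weaker statement than the theorem whenever $\unipol$ has a finite place of bad reduction.

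The repair is exactly what the paper does: instead of enlarging $S$ and invoking Theorem~\ref{thm:main2} verbatim, reopen its proof. At a place $v$ of bad reduction, Remark~\ref{rem:BadReductHeight} gives $|z|_v=|c|_v^{1/2}>1$ for every $z\in\relsupreper{\unipol}{S}{0}$, while $|0|_v<1$, so $\log|z-0|_v=\tfrac12\log|c|_v$, and Lemma~\ref{lem:BadReductHeight} shows this equals $\loccanheight{v}{\unipol}(0)$ exactly; hence $\Gamma_v-\loccanheight{v}{\unipol}(0)=0$ identically at such places, and in \eqref{eq:GamForm} the quantitative equidistribution estimate need only be applied over $\widetilde S=S\setminus M^{0,\unipol}_{K,\mathrm{bad}}$. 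This removes the bad places from \emph{both} the energy and the Lipschitz contributions and produces the stated constants with $|\widetilde S|$. (Incidentally, this exact-identity argument also makes your preliminary enlargement of $S$ unnecessary.) You already have all the ingredients for this step --- you cite the relevant remark --- but as written your proof delivers only the $|S'|$-version of the bound, not the theorem as stated.
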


\begin{proof}
Take $\kappa$ as in \eqref{eq:Kap}. By \eqref{eq:HolderConstant}, for all $v \in V$,
$$
C_v \leq 4 \log 6 + 6 \log^+ |c|_v,
$$
and $C_v = 0$ for $v \notin V$, so
$$
C := 4|V| \log 6 + 6 h(c) \geq \sum_{v \in V} C_v.
$$
We have
$$
C\kappa = \frac{4|V| \log 12 + 6(\log 2) h(c)}{\log 6 + 4h(c)} \leq 4|V| \log 2 + \frac{3}{2} \log 2,
$$
so
\begin{equation*}
\frac{6C\kappa}{|V|+1} \leq \frac{24|V| \log 2}{|V|+1} + \frac{9 \log 2}{|V|+1} < 29 \log 2,
\end{equation*}
and note also that
\begin{equation} \label{eq:CKap2}
\frac{|V|+1}{2C\kappa} \leq \frac{|V|+1}{2} \left( \frac{1}{|V| \log 2} + \frac{2}{3 \log 2} \right) \leq \frac{7+4|V|}{12 \log 2}.
\end{equation}
Now, referring to the proof of Theorem~\ref{thm:main}, note that if $v$ is a place of bad reduction for $\unipol$, by Remark~\ref{rem:BadReductHeight}, $|z|_v = |c|_v^{1/2}$ for all $z \in \cP = \relsupreper{\unipol}{S}{0}$, and so by Lemma~\ref{lem:BadReductHeight}
$$
\Gamma_v = \log \max \{ |\alpha|_v, |c|_v^{1/2} \} = \loccanheight{v}{\unipol}(\alpha).
$$
Thus, in \eqref{eq:GamForm} and subsequent equations, we may replace $S$ with $\widetilde S = S \setminus M^{0,\unipol}_{K,\mathrm{bad}}$. In particular, an upper bound for $|\cP|$ is given on the right-hand side of \eqref{eq:PBound}, with $\widetilde S$ in place of $S$. We have
\begin{align*}
u & = \log \left( \frac{32\pi(|V|+1) \left( \sum_{v \in \widetilde S} |\log \delta_v|^{1/2} \right)^2}{\kappa \canheight{\unipol}(0)^2} \right)-1 \\
& \leq \log(32\pi(|V|+1)|\widetilde S|) - 1 + \\
& \log \left( \frac{(A_\kappa + B_\kappa \canheight{\unipol}(0)) \left( \sum_{v \in \widetilde S \setminus M_K^\infty} (A_v + B_v \canheight{\unipol}(0)) + |M_K^\infty|(A_\infty + B_\infty \canheight{\unipol}(0)) \right)}{\canheight{\unipol}(0)^2} \right),
\end{align*}
where the inequality follows from Theorem~\ref{thm:deltavUnifBounds} and the generalized mean inequality. Setting
$$
A  := \sum_{v \in \widetilde S \setminus M_K^\infty} A_v + |M_K^\infty| A_\infty, \quad \qquad B := \sum_{v \in \widetilde S \setminus M_K^\infty} B_v + |M_K^\infty| B_\infty,
$$
we have
\begin{align*}
u & \leq \log(32 \pi (|V|+1) |\widetilde S|) + \log \left( \frac{A_\kappa A + (A_\kappa B + A B_\kappa) \canheight{\unipol}(0) + B_\kappa B \canheight{\unipol}(0)^2}{ \canheight{\unipol}(0)^2} \right) - 1 \\
& \leq \log(32 \pi (|V|+1) |\widetilde S|) + \log \left( \frac{A_\kappa A}{C_0^2} + \frac{A_\kappa B + A B_\kappa}{C_0} + B_\kappa B \right) - 1,
\end{align*}
recalling that $\canheight{\unipol}(0) \geq C_0$ for all $c \in K$. Moreover, from \eqref{eq:CKap2} we have
\begin{align*}
& \frac{|V|+1}{2C\kappa} \left[ \frac{2}{\canheight{\unipol}(0)} \left( \left( \sum_{v \in M_K^\infty} \frac{1}{\delta_v} \right) + |\widetilde S \setminus M_K^\infty| \right) \right]^\kappa \\
&  \leq \frac{7+4|V|}{12 \log 2} \left( \frac{2}{\canheight{\unipol}(0)} \left( |M_K^\infty| e^{A_\infty + B_\infty \canheight{\unipol}(0)} + |\widetilde S \setminus M_K^\infty| \right) \right)^{\frac{\log 2}{4 \canheight{\unipol}(0)}} \\
& \leq \frac{7+4|V|}{12 \log 2} \left( \frac{2 | \widetilde S|}{C_0} \right)^{\frac{\log 2}{4C_0}} e^{\frac{\log 2}{4} \left( \frac{A_\infty}{C_0} + B_\infty \right)},
\end{align*}
again using Theorem~\ref{thm:deltavUnifBounds} and the fact that $\canheight{\unipol}(0) \geq C_0$. This completes the proof.
\end{proof}

Moreover, we have an explicit form of Corollary~\ref{cor:IntMain}

\begin{corollary} \label{cor:IntMain2}
Let $\unipol(z) = z^2 + c$ with $c \in \Z$ such that $0$ is not preperiodic for $\unipol$ i.e. $c \neq 0,-1,-2$. Let $S$ be a finite set of places of $\Q$ containing the archimedean place $\infty$. Then there are at most $e^{1+\sqrt{2u}+u}$ preperiodic points for $\unipol$ which are $S$-integral relative to $0$, where
$$
u \leq \log(64\pi |S|) + \log \Bigg( 16 \Bigg( 5 + \frac{20+5 \log 6}{\log 2}   \Bigg)  \sum_{p \in S \setminus \{ \infty \}} 2^{r_p} \Bigg)-1,
$$
with 
$$
r_p := \max \left \{ p \left \lfloor \frac{\log \left( p v_p(2)+1 \right)}{\log 2} + 1 \right \rfloor - 1 , p^2(p-1)-1 \right \},
$$
where $v_p(d)$ denotes the $p$-adic order of $d$.
\end{corollary}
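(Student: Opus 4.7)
The plan is to specialize Theorem~\ref{thm:UniformMainExplicit} to the case $K=\Q$, $c \in \Z$. The first observation is that $c \in \Z$ forces $|c|_p \leq 1$ at every finite place $p$, so $\unipol$ has good reduction at every finite place. Hence $M^{0,\unipol}_{\Q,\mathrm{bad}} = \emptyset$, $\widetilde S = S$, and $V = \{\infty\}$, so $|V|=1$ and $|V|+1=2$ (which accounts for the $64\pi|S|$ appearing inside the first logarithm of $u$).

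Next I would collect the integer-case values of all the constants entering Theorem~\ref{thm:UniformMainExplicit}. From Corollary~\ref{cor:CanHeightBound1} and Lemma~\ref{lem:IntCanHeightBounds} we have $C_1=4$ and $C_2=0$, and from Corollary~\ref{cor:CanHeightLowBound} we have $C_0=\tfrac14$. Substituting these into \eqref{eq:aKapbKap} gives $A_\kappa = 1 + \log 6/\log 2$ and $B_\kappa = 16/\log 2$. For the unique archimedean place, since $c \in \Z \setminus \{0,-1,-2\}$ we have either $c \geq 1$ or $|c| \geq 3 > 2$, so the second half of Lemma~\ref{lem:JuliaSetDistance} (respectively the first half) applies, and in either case $-\log\delta_\infty \leq \log 2$; therefore we may take $A_\infty = \log 2$ and $B_\infty = 0$. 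For the finite places of $S$, the bound of Corollary~\ref{cor:nonArchDeltavBound} gives $A_p = (\log 2) 2^{r_p}$ and $B_p = 4 \cdot 2^{r_p}$ with $r_p$ as stated.

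Writing $T := \sum_{p \in S \setminus\{\infty\}} 2^{r_p}$, these choices yield $A = (\log 2)(T+1)$ and $B = 4T$. Plugging into the formula for $u$ in Theorem~\ref{thm:UniformMainExplicit}, the first logarithm becomes $\log(64\pi|S|)$, while
\[
\frac{A_\kappa A}{C_0^2} + \frac{A_\kappa B + A B_\kappa}{C_0} + B_\kappa B
= 16 A_\kappa A + 4(A_\kappa B + A B_\kappa) + B_\kappa B.
\]
Expanding and estimating crudely by $T+1 \leq 2T$ (valid since $T \geq 1$ whenever $S$ contains at least one finite place; otherwise the sum is empty and the bound is trivial since then $\delta$ only depends on the archimedean place), one verifies that this expression is bounded above by $16\bigl(5 + (20 + 5\log 6)/\log 2\bigr) T$, which is exactly the constant appearing in the corollary.

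Finally, I would compare the three terms inside the maximum in Theorem~\ref{thm:UniformMainExplicit}: the constant $29\log 2$, the exponential $e^{1+\sqrt{2u}+u}$, and the third term, which with the integer-case values becomes a quantity growing only polynomially in $|S|$. Since $u \gg \log|S|$ (via the $\log T$ dependence), the middle term dominates for any relevant $S$, so the overall maximum is $e^{1+\sqrt{2u}+u}$, which is the bound stated. The only non-routine step is verifying that one can absorb the $T+1$ into $2T$ and that the third term in the maximum is indeed dominated; everything else is bookkeeping and substitution.
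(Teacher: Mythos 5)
Your proposal is correct and follows essentially the same route as the paper: specialize Theorem~\ref{thm:UniformMainExplicit} with $V=\{\infty\}$, $\widetilde S = S$, the integer-case constants $C_1=4$, $C_2=0$, $C_0=\tfrac14$, $A_\infty=\log 2$, $B_\infty=0$, and $A_p=(\log 2)2^{r_p}$, $B_p=4\cdot 2^{r_p}$, then check that the exponential term dominates the maximum. The only (harmless) difference is cosmetic: the paper absorbs the constant via $A\le B$, which reproduces $16\bigl(5+(20+5\log 6)/\log 2\bigr)\sum_p 2^{r_p}$ exactly, whereas you use $T+1\le 2T$ and verify the resulting (smaller) quantity is bounded by that constant.
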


\begin{example}
In the case $d = 2$ and $S = \{ \infty, 2 \}$, the above gives an upper bound of $451287434$ for the number of preperiodic points for a map of the form $\unipol(z)=z^2+c$, $c \in \Z$, which are $\{ \infty, 2 \}$-integral with respect to 0.
\end{example}

\begin{proof}
In this case, in Corollary~\ref{cor:CanHeightBound1} we can take $C_1 = 4$ and $C_2 = 0$, and in Corollary~\ref{cor:CanHeightLowBound} we can take $C_0 = 1/4$. Thus, looking to \eqref{eq:aKapbKap}, we have 
\begin{align*}
A_\kappa & = 1 + \frac{\log 6}{\log 2}, \\
B_\kappa & = \frac{16}{\log 2}.
\end{align*}
Now, all integers $c$ such that $0$ is not preperiodic for $\unipol$ lie outside the Mandelbrot set, so by Lemma~\ref{lem:JuliaSetDistance}, we can take $A_\infty = \log 2$ and $B_\infty = 0$. Thus, from Corollary~\ref{cor:nonArchDeltavBound}, we have
\begin{align*}
A & = \sum_{p \in S \setminus \{ \infty \}} A_p + A_\infty = \sum_{p \in S \setminus \{ \infty \} } (2^{r_p} \log 2) + \log 2, \\
B & = \sum_{p \in S \setminus \{ \infty \}} B_p + B_\infty = \sum_{p \in S \setminus \{ \infty \}} 2^{r_p+2} \geq A.
\end{align*}
The result follows from plugging these values into Theorem~\ref{thm:UniformMain}, and noting that $e^{1+\sqrt{2u}+u}$ attains the maximum of the bound therein.
\end{proof}

\section{$S$-units in dynamical sequences} \label{sec:Sunit}

To prove Theorem~\ref{thm:SunitBound}, we first observe that $S$-units in the sequence $\disunit{\pol}{S}{\alpha} = \{ \pol^n(\alpha)-\pol^m(\alpha) \}_{n > m \geq 0}$ correlate with $S$-integral (with respect to $\alpha$) preperiodic points of corresponding (pre)period.

\begin{lemma} \label{lem:Suint}
Let $K$ be a number field, let $\alpha \in K$, let $\pol \in K[z]$ be a polynomial of degree $d \geq 2$, and let $S$ be a finite set of places of $K$, containing all the archimedean ones and all the places of bad reduction for $\pol$. Then
\begin{align*}
|\relsupreper{\pol}{S}{\alpha}| \geq \big| \big \{ & \beta \in \overline K : \pol^n(\beta)-\pol^m(\beta) = 0 \text{ for some } n > m \geq 0 \\ 
& \text{such that } \pol^n(\alpha) - \pol^m(\alpha) \in \cO_S^* \big \} \big|.
\end{align*}
\end{lemma}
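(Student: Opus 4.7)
The plan is to establish that the set on the right-hand side is literally contained in $\relsupreper{\pol}{S}{\alpha}$, from which the cardinality inequality is immediate. Let $\beta$ lie in the right-hand set, with witnesses $n > m \geq 0$. The relation $\pol^n(\beta) = \pol^m(\beta)$ makes $\beta$ preperiodic for $\pol$, so it suffices to verify that $\beta$ is $S$-integral relative to $\alpha$.

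Suppose, toward a contradiction, that $\alpha$ and $\beta$ share a reduction at some place $w$ of $\overline K$ lying over a place $v \in M_K \setminus S$. Since $S$ was assumed to contain all places of bad reduction for $\pol$, the map $\pol$ has good reduction at $v$ and hence at $w$. Consequently, reduction at $w$ commutes with the action of $\pol$, and inductively
\[
\overline{\pol^k(\alpha)} = \overline{\pol^k(\beta)} \qquad \text{for every } k \geq 0.
\]
Applying this at $k=n$ and $k=m$, and combining with $\pol^n(\beta) = \pol^m(\beta)$, we obtain $\overline{\pol^n(\alpha)} = \overline{\pol^m(\alpha)}$ in the residue field at $w$, i.e.\ $|\pol^n(\alpha) - \pol^m(\alpha)|_w < 1$. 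However, $\pol^n(\alpha) - \pol^m(\alpha) \in \cO_S^* \subset K^\times$ is an $S$-unit, so its absolute value equals $1$ at every place $w$ of $\overline K$ lying over a place outside $S$, contradicting the previous inequality.

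This contradiction shows that $\alpha$ and $\beta$ have distinct reductions at every place of $\overline K$ over $M_K \setminus S$, so $\beta$ is $S$-integral relative to $\alpha$; together with preperiodicity, $\beta \in \relsupreper{\pol}{S}{\alpha}$. The desired inequality of cardinalities follows. The only non-trivial ingredient is the standard fact that good reduction implies reduction commutes with polynomial iteration, which poses no real obstacle.
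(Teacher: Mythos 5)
Your proof is correct in substance and establishes exactly the containment the paper proves, namely that the right-hand set lies inside $\relsupreper{\pol}{S}{\alpha}$, but by a more conceptual route. You invoke the standard fact that good reduction makes reduction commute with iteration on $\P^1$ (e.g.\ \cite[Theorem~2.18]{Si}), whereas the paper argues by hand: it first disposes of the case $|\alpha|_v>1$ at a place $v\notin S$ of good reduction (then $|\pol^n(\alpha)-\pol^m(\alpha)|_v=|\alpha|_v^{d^n}>1$, so the difference is not an $S$-unit), then expands $\pol^n-\pol^m$ about $\alpha$, checks that the coefficients $c_i$ are $v$-integral, and uses the ultrametric inequality to see that a root $\beta$ with $|\beta-\alpha|_v<1$ would force $|\pol^n(\beta)-\pol^m(\beta)|_v=1\neq 0$. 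Your approach buys brevity and no coefficient bookkeeping, at the cost of quoting the good-reduction functoriality theorem; the paper's computation is elementary and self-contained.

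One step you gloss over: from $\overline{\pol^n(\alpha)}=\overline{\pol^m(\alpha)}$ you conclude $|\pol^n(\alpha)-\pol^m(\alpha)|_w<1$, but the reductions live in $\P^1$ of the residue field, and this equivalence is only valid when both points lie in the closed unit disk; if $|\alpha|_w>1$ the iterates all reduce to $\infty$ and the inequality does not follow. The case cannot in fact occur, but you should say why: $\beta$ is preperiodic and $\pol$ has good reduction at $w$, so $|\beta|_w\le 1$ and its reduction is affine; since $\overline\alpha=\overline\beta$, also $|\alpha|_w\le 1$, and then every iterate of $\alpha$ stays in the unit disk, justifying your displayed inequality. (Alternatively, handle $|\alpha|_w>1$ directly as the paper does: the difference then has absolute value $>1$, again contradicting the $S$-unit hypothesis.) With this one-sentence addition your argument is complete.
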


\begin{proof}
Write
$$
\pol(z) = a_0 + a_1 z + \cdots + a_d z^d, \: a_i \in K,
$$
and let $v \notin S$. Then, since $\pol$ has good reduction at $v$, $|a_i|_v \leq 1$ for all $i$, and $|a_d|_v = 1$. Hence, if $|\alpha|_v > 1$, then
$$
|\pol(\alpha)|_v = |a_0 + a_1 \alpha + \cdots + a_d \alpha^d|_v = |\alpha|_v^d > 1,
$$
and so continuing by induction, $|\pol^n(\alpha)|_v = |\alpha|_v^{d^n} > 1$ for all $n \geq 1$. Thus, for all $n > m \geq 0$, $|\pol^n(\alpha) - \pol^m(\alpha)|_v = |\alpha|_v^{d^n} > 1$, and so $\pol^n(\alpha)-\pol^m(\alpha)$ is not an $S$-unit. We hence assume that $|\alpha|_v \leq 1$ for all $v \notin S$. Now, let $n > m \geq 0$, and write
\begin{align*}
\pol^n(z)-\pol^m(z) & = b_0 + b_1 z + \cdots + b_{d^n} z^{d^n} \\ 
& = \pol^n(\alpha)-\pol^m(\alpha) + c_1 (z-\alpha) + \cdots + c_{d^n} (z-\alpha)^{d^n}.
\end{align*}
Note that $b_{d^n} = c_{d^n}$, and for $1 \leq k < d^n$ we have
\begin{equation} \label{eq:Sstuff}
c_{d^n-k} = b_{d^n-k} + \sum_{i = 1}^{k} (-1)^{i-1} \binom{d}{i} \alpha^i c_{d^n-k+i}.
\end{equation}
Let $v \notin S$. Then $|b_i|_v \leq 1$ for all $i$, again using the fact that $\pol$ has good reduction at $v$, and so an easy induction with \eqref{eq:Sstuff} shows that $|c_i|_v \leq 1$ for all $i$. Thus, if $\pol^n(\alpha)-\pol^m(\alpha)$ is an $S$-unit, then for any root $\beta$ of $\pol^n(z)-\pol^m(z)$, we must have $|\beta - \alpha|_v \geq 1$ for all $v \notin S$. Indeed, if $|\beta-\alpha|_v < 1$ for some $v \notin S$, then
$$
0 = |\pol^n(\beta)-\pol^m(\beta)|_v = | \pol^n(\alpha)-\pol^m(\alpha) + c_1 (\beta-\alpha) + \cdots + c_{d^n} (\beta-\alpha)^{d^n} |_v = 1,
$$
a contradiction. That is, such $\beta$ are $S$-integral relative to $\alpha$ (as the relevant $K$-embeddings of $\beta$ are also roots of $\pol^n(z)-\pol^m(z)$), and therefore belong to $\relsupreper{\pol}{S}{\alpha}$, as they are of course preperiodic for $\pol$.
\end{proof}

Hence, to relate $|\relsupreper{\pol}{S}{\alpha}|$ to $|\disunit{\pol}{S}{\alpha} \cap \cO_S^*|$, it will suffice to obtain a lower bound for the number of distinct zeros of $\pol^n-\pol^m$, $n > m \geq 0$. For any complex polynomial $f$, let $\z(f) := \deg \mathrm{rad}(f)$ denote the number of distinct zeros of $f$. Then we have the following.

\begin{lemma} \label{lem:distRootBound}
Let $K$ be a number field and let $\pol \in K[z]$ be a polynomial of degree $d \geq 2$. Then for all $n > m \geq 0$, we have $\z( \pol^n - \pol^m ) \geq \max \{ 1, n-m-2 \}  \max \{ 1, d^{m-1} \} \geq n$.
\end{lemma}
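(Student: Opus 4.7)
Set $k := n - m$. A point $\beta \in \overline{K}$ is a root of $\pol^n(z) - \pol^m(z)$ exactly when $\pol^m(\beta)$ is a periodic point of $\pol$ of period dividing $k$, so
\[
\z(\pol^n - \pol^m) = |(\pol^m)^{-1}(F)|,
\]
where $F \subset \overline{K}$ denotes the set of distinct roots of $\pol^k(z) - z$ (equivalently, the distinct periodic points of $\pol$ of period dividing $k$). The argument then splits naturally into a lower bound on $|F|$ and a count of preimages under $\pol^m$.

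For the first ingredient, I would invoke I.~N.~Baker's theorem, which guarantees that any polynomial of degree $d \geq 2$ has a periodic cycle of every exact period $k \geq 1$, with at most one exception (which can occur only when $d = 2$). For $k \geq 3$ this supplies both at least one fixed point and a full period-$k$ cycle inside $F$, giving $|F| \geq k + 1$; the small cases $k \in \{1, 2\}$ admit a direct case analysis of the roots of $\pol^k(z) - z$ via the multiplier at fixed points, producing $|F| \geq k$, and hence $|F| \geq \max\{1, k - 2\}$ uniformly. This already settles the case $m = 0$, where $\z(\pol^n - \pol^m) = |F| \geq k = n$, dispatching both the product bound and the ``$\geq n$'' part.

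For the second ingredient, when $m \geq 1$ I would apply Riemann--Hurwitz to $\pol^m : \P^1 \to \P^1$: as a polynomial of degree $d^m$ totally ramified at $\infty$, its affine ramification divisor has degree $d^m - 1$. Hence for any finite $Q \subset \overline{K}$,
\[
|(\pol^m)^{-1}(Q)| \;=\; |Q|\, d^m - \sum_{q \in Q} \sum_{z \in (\pol^m)^{-1}(q)} (e_z - 1) \;\geq\; |Q|\, d^m - (d^m - 1).
\]
Applied to $Q = F$ with $|F| \geq k$, this yields $\z(\pol^n - \pol^m) \geq (k - 1) d^m + 1$, from which the stated bound $\z \geq \max\{1, n - m - 2\} \max\{1, d^{m-1}\}$ follows by elementary algebra using $d \geq 2$ to compare $(k-1)d^m$ with $(k - 2) d^{m-1}$, and similarly for the comparison with $n = k + m$.

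The main obstacle will be the borderline case $k = 1$ with $m \geq 2$, where the Riemann--Hurwitz inequality only gives $\geq 1$ while the target is $d^{m-1}$. Here I would argue that $|F| = 1$ forces $\pol(z) - z = c(z - a)^d$, whence $\pol'(a) = 1$, so the unique fixed point $a$ is not a critical point of $\pol$; a short computation of $\pol^{-1}(a)$ shows that none of its elements is critical either, and by iteration $a$ is not a critical value of $\pol^m$. Consequently $(\pol^m)^{-1}(\{a\})$ consists of $d^m$ distinct points, comfortably exceeding the required $d^{m-1}$ and completing the proof.
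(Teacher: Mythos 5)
Your proposal is essentially correct and takes a genuinely different route from the paper. Both arguments start from Baker's theorem to produce many periodic points (the paper records this as $\z(\pol^k-\pol^0)\geq k$, handling the $z^2-3/4$ exception by hand), but where the paper feeds the decomposition $\pol^n-\pol^m=(\pol^{n-m}-\pol^0)\circ\pol^m$ into the Fuchs--Peth\H{o} theorem $\z(g\circ h)\geq\max\{1,\z(g)-2\}\deg h+1$, you count preimages directly: $\z(\pol^n-\pol^m)=|(\pol^m)^{-1}(F)|$ with $F$ the periodic points of period dividing $k=n-m$, and Riemann--Hurwitz bounds the affine ramification of $\pol^m$ by $d^m-1$, giving $|(\pol^m)^{-1}(F)|\geq(|F|-1)d^m+1$. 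This is more elementary (no appeal to the composite-function theorem), and in the main case it is slightly sharper, $(k-1)d^m+1$ rather than $(k-2)d^m+1$, which comfortably yields both the displayed product bound and the ``$\geq n$'' assertion that is what gets used downstream. Both proofs must treat the degenerate case $k=1$ with a unique fixed point separately: the paper applies Fuchs--Peth\H{o} again with $g=\pol^2-\pol$, $h=\pol^{m-1}$, while you analyse $\pol(z)=z+c(z-a)^d$ directly.

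In that borderline case, though, one step is a non sequitur as written: from ``$a$ is not critical and no point of $\pol^{-1}(a)$ is critical'' you infer ``by iteration $a$ is not a critical value of $\pol^m$.'' The critical values of $\pol^m$ are $\bigcup_{i=1}^{m}\pol^i\bigl(\mathrm{Crit}(\pol)\bigr)$, so you would need that no critical point of $\pol$ lies in $\pol^{-i}(a)$ for \emph{every} $1\leq i\leq m$; checking $i=1$ says nothing about deeper preimages. (The assertion happens to be true for this family, but proving that the critical orbits of $z+c(z-a)^d$ never land on $a$ requires its own argument and is not needed.) The gap is easily repaired inside your own framework: $\pol(z)-a=(z-a)\bigl(1+c(z-a)^{d-1}\bigr)$ has exactly $d$ distinct roots, so $|\pol^{-1}(a)|=d\geq 2$, and applying your Riemann--Hurwitz inequality to $\pol^{m-1}$ with $Q=\pol^{-1}(a)$ gives $|(\pol^m)^{-1}(a)|\geq d\cdot d^{m-1}-(d^{m-1}-1)=(d-1)d^{m-1}+1$, which exceeds $d^{m-1}$ and also $n=m+1$. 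With that substitution in place of the critical-value claim, the proof goes through.
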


\begin{proof}
First note that by \cite[Theorem~2]{Ba}, $\z(\pol^k-\pol^0) \geq k$ for all $k \geq 1$, since $\pol$ has a cycle of exact period $k$ (with the exception of $k=2$ when $\pol$ is linearly conjugate to $z^2-3/4$, but one can manually check $\z(\pol^2-\pol^0) \geq 2$ in this case). Now, for polynomials $g,h$ with $\z(g) > 1$, \cite[Main~Theorem]{FP} gives
$$
\z(g \circ h) \geq \max \{ 1, \z(g)-2 \} \deg h + 1.
$$
Hence, if $(n,m) \neq (1,0)$ or $\z(\pol-\pol^0) > 1$, the result follows from the above with $g = \pol^{n-m}-\pol^0$, $h = \pol^m$. Otherwise, take $g = \pol^2-\pol$, $h = \pol^{m-1}$.
\end{proof}

\begin{proof}[Proof of Theorem~\ref{thm:SunitBound}]
If $\disunit{\pol}{S}{\alpha} \cap \cO_S^*$ is non-empty, it must contain an $S$-unit of the form $\pol^n(\alpha)-\pol^m(\alpha)$ with
$$
n \geq \sqrt{2 |\disunit{\pol}{S}{\alpha} \cap \cO_S^*| + \frac{1}{4}} - \frac{1}{2}.
$$
Now, from Lemma~\ref{lem:Suint} and Lemma~\ref{lem:distRootBound}, we have
$$
|\relsupreper{\pol}{S}{\alpha}| \geq \z(\pol^n-\pol^m) \geq n \geq \sqrt{2 |\disunit{\pol}{S}{\alpha} \cap \cO_S^*| + \frac{1}{4}} - \frac{1}{2},
$$
and rearranging completes the proof.
\end{proof}

On the other hand, it is worth noting that assuming the $abc$-conjecture, \cite[Corollary~1.6]{GNT15} can be adapted to number fields (see also \cite[Theorem~1.2]{GNT}, which assumes Vojta's conjecture): Given a polynomial $\unipol \in K[z]$ which is not linearly conjugate over $K$ to a polynomial for the form $z^d+c$, and a non-preperiodic point $\alpha$ for $\unipol$, there exists an effectively computable finite set $\cZ$ depending only on $K, \unipol, |S|$ (enlarging $S$ here to contain all places of bad reduction for $\unipol$) and the canonical height $\canheight{\unipol}(\alpha)$ of $\alpha$ with respect to $\unipol$, such that for every $(m,n) \in (\Z_{\geq 0} \times \Z^+) \setminus \cZ$, there is a place $v \notin S$ such that $\alpha$ has \emph{preperiodicity portrait} $(m,n)$ for $\unipol$ modulo $v$. That is, $|\unipol^{m+n}(\alpha)-\unipol^m(\alpha)|_v < 1$, while $|\unipol^{m+k}(\alpha)-\unipol^m(\alpha)|_v, |\unipol^{j+\ell}(\alpha)-\unipol^j(\alpha)|_v \geq 1$ for $1 \leq k < n$, $0 \leq j < m$ and $\ell \geq 1$. Looking to the Newton polygon of $\unipol^{m+n}-\unipol^m$ expanded about $\alpha$, we see this implies that for $(m,n) \notin \cZ$ the \emph{generalised dynatomic polynomial}
$$
\Phi_{\unipol, m,n}(z) := \frac{\Phi_{\unipol,n}(\unipol^m(z))}{\Phi_{\unipol,n}(\unipol^{m-1}(z))}
$$
(where $\Phi_{\unipol,n}$ denotes the \emph{dynatomic polynomial}
$$
\Phi_{\unipol,n}(z) := \prod_{d \mid n} \left( \unipol^d(z)-z \right)^{\mu(n/d)},
$$
and here $\mu$ is the M\"{o}bius function) has a root $\beta$ satisfying $|\alpha-\beta|_v < 1$. Thus, if $\Phi_{\unipol,m,n}$ is irreducible, no root of $\Phi_{\unipol,m,n}$ is $S$-integral relative to $\alpha$. For fixed $\unipol$, the pairs $(m,n)$ such that $\Phi_{\unipol,m,n}$ is reducible appear to be numerically isolated, so perhaps it is the case that $\Phi_{\unipol,m,n}$ is irreducible for all sufficiently large $n$. This would prove Conjecture~\ref{conj:Ih} (assuming $abc$), for non-unicritical polynomials, even without the assumption of $\alpha$ being totally Fatou, but this irreducibility problem seems itself difficult.

\Address


\begin{thebibliography}{30}

\bibitem{Ba} I. N. Baker, \textit{Fixpoints of polynomials and rational functions}, Journal London Math. Soc. \textbf{39} (1964), 615-622.

\bibitem{BIR} M. Baker, S. Ih and R. Rumely, \textit{A finiteness property of torsion points}, Algebra Number Theory \textbf{2} (2008), no. 2, 217-248.

\bibitem{BR1} M. Baker and R. Rumely, \textit{Equidistribution of small points, rational dynamics, and potential theory}, Ann. Inst. Fourier (Grenoble) \textbf{56} (2006(, no. 3, 625-688.

\bibitem{BR} M. Baker and R. Rumely, \textit{Potential theory and dynamics on the Berkovich projective line}, American Mathematical Society, 2010.

\bibitem{B} R. Benedetto, \textit{Dynamics in one non-archimedean variable}, American Mathematical Society, 2019.

\bibitem{BD} F. Berteloot and T.-C. Dinh, \textit{The Mandelbrot set is the shadow of a Julia set}, Discrete and Continuous Dynamical Systems, \textbf{40} (2020), no. 12, 6611-6633.

\bibitem{BG} E. Bombieri and W. Gubler, \textit{Heights in diophantine geometry}, Cambridge University Press, 2006.


\bibitem{Br} H. Brolin, \textit{Invariant sets under iterations of rational functions}, Arkiv f\"{u}r Mathematik, \textbf{6} (1966)

\bibitem{CG} L. Carleson and T. W. Gamelin, \textit{Complex Dynamics}, Springer, New York, 1993.

\bibitem{C} I. Chatzigeorgiou, \textit{Bounds on the Lambert function and their application to the outage analysis of user cooperation}, in IEEE Communications Letters, \textbf{17} (2013), no. 8, 1505-1508.

\bibitem{DH} A. Douady and J. H. Hubbard, \textit{Exploring the Mandelbrot set}, The Orsay notes, \url{http://pi.math.cornell.edu/~hubbard/OrsayEnglish.pdf}.

\bibitem{DKY} L. Demarco, H. Krieger and H. Ye, \textit{Uniform Manin-Mumford for a family of genus 2 curves}, Ann. of Math. (2) \textbf{191} (2020) no. 3, 949-1001.

\bibitem{D} A. Dubickas, \textit{Algebraic numbers with bounded degree and bounded Weil height}, Bull. Aust. MAth. Soc. \textbf{98} (2018), 212-220.

\bibitem{FRL} C. Favre and J. Rivera-Letelier, \textit{Equidistribution quntitative des points de petite hauteur sur la droite projective}, Math. Ann. \textbf{335} (2006), no. 2, 311-361.

\bibitem{F} P. Fili, \textit{A metric of mutual energy and unlikely intersections for dynamical systems}, Preprint, arXiv:1708.08403v1 [math.NT].

\bibitem{FP} C. Fuchs and A Peth\"{o}, \textit{Composite rational functions having a bounded number of zeros and poles}, Proc. Amer. Math. Soc., \textbf{139} (2011), no. 1, 31-38.

\bibitem{GI} D. Grant and S. Ih, \textit{Integral division points on curves}, Composito Math. \textbf{149} (2013), 2011-2035.

\bibitem{GNT15} D. Ghioca, K. Nguyen and T. J. Tucker, \textit{Portraits of preperiodic points for rational maps.} \textit{Math. Proc. Camb. Phil. Soc.} \textbf{159} (2015), 165-186.

\bibitem{GNT} D. Ghioca, K. Nguyen and T. J. Tucker, \textit{Squarefree doubly primitive divisors in dynamical sequences}, Math. Proc. Camb. Phil. Soc. \textbf{164} (2018), no. 3, 551-572.

\bibitem{HS} L.-C. Hsia and J. H. Silverman, \textit{A quantitative estimate for quasi-integral points in orbits}, Pacific J. Math. \textbf{249} (2011), no. 2, 321-342.

\bibitem{IT} S. Ih and T. Tucker, \textit{A finiteness property for preperiodic points of Chebyshev polynomials}, Int. J. Number Theory, \textbf{6} (2010), no. 5, 1011-1025.

\bibitem{I} P. Ingram, \textit{Lower bounds on the canonical height associated to the morphism $\varphi(z) = z^d+c$}, Monatsh. Math. \textbf{157} (2009), no. 1, 69-89.

\bibitem{I2} P. Ingram, \textit{The critical height is a moduli height}, Duke Math. J. \textbf{167} (2018), no. 7, 1311-1346.

\bibitem{K} M. Kosek, \textit{H\"{o}lder exponents of the green function of planar polynomial Julia sets}, Ann. Mat. Pura ed App \textbf{193} (2014), 359-368.

\bibitem{KPS} T. Krick, L. M. Pardo and M. Sombra, \textit{Sharp estimates for the arithmetic Nullstellensatz}, Duke Math. J. \textbf{109} (3), 2001, 521-598.

\bibitem{KLS} H. Krieger, A. Levin, Z. Scherr, T. J. Tucker, Y. Yasufuku and M. E. Zieve, \textit{Uniform boundedness of $S$-units in arithmetic dynamics}, Pacific Journal of Mathematics, \textbf{274} (2015), no. 1, 97-105.

\bibitem{L} M. Lyubich, \textit{Conformal Geometry and Dynamics of Quadratic Polynomials}, Book in preparation, \url{www.math.sunysb.edu/\~mlyubich/book.pdf}.

\bibitem{NC} M. Narvaez-Clauss, \textit{Quantitative equidistribution of Galois orbits of points of small height on the algebraic torus}, Ph.D thesis, Universitat de Barcelona, 2016, available at \url{http://diposit.ub.edu/dspace/bitstream/2445/112532/1/MNC_PhD_THESIS.pdf}.

\bibitem{P} C. Petsche, \textit{S-integral preperiodic points for dynamical systems over number fields}, Bull. London Math. Soc., \textbf{40} (2008), no. 5, 749-758.

\bibitem{Po} C. Pommerenke, \textit{Univalent functions, with a chapter on quadratic differentials by Gerd Jensen}, Studia Math. Lehrb\"{u}cher, \textbf{15} (1975), Vandenhoeck and Ruprecht.

\bibitem{Ra} M. Raynaud, \textit{Sous-vari\'{e}t\'{e}s d'une vari\'{e}t\'{e} ab\'{e}lienne et points de torsion}, `Arithmetic and geometry', Vol. I, 327-252, Progr. Math. 35, Birkh\"{a}user Boston, Boston, MA, 1983.

\bibitem{RL} J. Rivera-Letelier, \textit{Dynamique des fonctions rationnelles sur des corps locaux} (French, with Endlish and French summaries), Ast\'{e}risque \textbf{287} (2003), xv, 147-230. Geometric methods in dynamics. II.

\bibitem{RW} R. Rumely and S. Winburn, \textit{The Lipschitz constant of a non-archimedean rational function}, arXiv:1512.01136 [math.DS]

\bibitem{Si} J. H. Silverman, \textit{The arithmetic of dynamical systems}, Springer, 2007.

\bibitem{Si3} J. H. Silverman, \textit{Integer points, Diophantine approximation, and iteration of rational maps}, Duke Math. J. \textbf{71} (1993), no. 3, 793-829.

\bibitem{Si2} J. H. Silverman, \textit{Moduli Spaces and Arithmetic Dynamics}, volume 30 of \textit{CRM Monograph Series}, AMS, 2012. 

\bibitem{S} C. M. Stroh, \textit{Julia sets of complex polynomials and their implementation on the computer}, Master's thesis, University of Linz, 1997.

\end{thebibliography}
\end{document}